\tikzset{snake it/.style={decorate, decoration=snake}}
\theoremstyle{plain}
\newtheorem{thm}{Theorem}[section]
\newtheorem{cor}[thm]{Corollary}
\newtheorem{lem}[thm]{Lemma}
\newtheorem{prop}[thm]{Proposition}
\newtheorem{defn}[thm]{Definition}
\theoremstyle{definition}
\theoremstyle{remark}
\newtheorem{rmk}[thm]{Remark}
\newcommand{\BG}{{\mathbb{G}}}
\newcommand{\BP}{{\mathbb{P}}}
\newcommand{\BQ}{{\mathbb{Q}}}
\newcommand{\BZ}{{\mathbb{Z}}}
\newcommand{\CB}{{\mathcal B}}
\newcommand{\CC}{{\mathcal C}}
\newcommand{\CD}{{\mathcal D}}
\newcommand{\CF}{{\mathcal F}}
\newcommand{\CG}{{\mathcal G}}
\newcommand{\CI}{{\mathcal I}}
\newcommand{\CM}{{\mathcal M}}
\newcommand{\CO}{{\mathcal O}}
\newcommand{\CP}{{\mathcal P}}
\newcommand{\CR}{{\mathcal R}}
\newcommand{\CS}{{\mathcal S}}
\newcommand{\Fc}{{\mathfrak{c}}}
\newcommand{\Ft}{{\mathfrak{t}}}
\newcommand{\sslash}{\mathbin{/\mkern-6mu/}}
\DeclareFontFamily{OT1}{rsfs}{}
\DeclareFontShape{OT1}{rsfs}{n}{it}{<-> rsfs10}{}
\DeclareMathAlphabet{\curly}{OT1}{rsfs}{n}{it}
\newcommand\Spec{\operatorname{Spec}}
\def\SL@margtext#1{%
  \ifmmode
    \xdef\SL@labelname{\SL@prlabelname{#1}}%
  \else
    \setbox\@tempboxa=\vtop to 0pt{\vss
  \@for\@tmpa:=#1\do{%
      \hbox to \hsize{\expandafter\SL@lrtext\expandafter{\@tmpa}}}\vss}%
    \dp\@tempboxa\z@
    \ifvmode
      \@tempdima=\prevdepth
      \nointerlineskip\box\@tempboxa\nobreak
      \prevdepth=\@tempdima
    \else
      \vadjust{\box\@tempboxa\nobreak}%
    \fi
  \fi
}
\let\SL@setlabel\SL@margtext
\newcommand{\pcs}{ \,^{\frak p}\!{\mathcal H}   }
\newcommand{\oql}{\overline{\BQ}_{\ell}}
\definecolor{darkgreen}{rgb}{0.0, 0.7, 0.0}
\newenvironment{??}{\noindent \color{darkgreen}{\bf ???:} \footnotesize}{}
\definecolor{cyan}{cmyk}{1,0,0,0}
\newcommand{\bdg}{\begin{dg}}
\begin{document}
\title{The de Rham stack  and the variety of very good splittings
of a curve}
\date{\today}

\author[M. A. de Cataldo]{Mark Andrea A. de Cataldo}
\address{Stony Brook University}
\email{mark.decataldo@stonybrook.edu}

\author[M. Groechenig]{Michael Groechenig}
\address{University of Toronto}
\email{michael.groechenig@utoronto.ca}

\author[S. Zhang]{Siqing Zhang}
\address{Stony Brook University}
\email{siqing.zhang@stonybrook.edu}

\maketitle

\begin{abstract}
    The stack of relative splittings of a special Azumaya algebra plays a key role in the Non-Abelian Hodge Theory for curves in positive characteristics. 
    In this paper, we define and study an open substack consisting of the so-called very good splittings.
    We  show that, when using very good splittings, the Non-Abelian Hodge isomorphism  preserves the semistable loci on the Dolbeault and the de Rham sides.
    We also show that the stack of very good splittings admits a quasi-projective tame moduli space.
    As a consequence, we show that the derived pushforwards of the intersection complexes by the Hitchin and the de Rham-Hitchin morphisms are isomorphic and they have isomorphic perverse cohomology sheaves. 
\end{abstract}

\setcounter{tocdepth}{1} 

\tableofcontents
\setcounter{section}{0}

\section{Introduction}\label{intro}

It is well-known, by the work of Simpson, Corlette and others, that semistable
Higgs bundles and semistable flat connections over a complex projective manifold are intimately related, see \cite{Corlette}, \cite{Donaldson}, \cite{Hit}, \cite{SimpICM}.
This correspondence, known as the Non-Abelian Hodge Theorem (NAHT), admits several variants and generalizations, see  \cite[\S2]{HT1}, \cite{Konno}, \cite{Alfaya}, including Mochizuki's generalization to projective manifolds endowed with simple normal crossings divisors \cite{Mochizuki}.
 
We consider the situation over an algebraically closed field of positive characteristic $p>0$
and we restrict our attention to nonsingular projective integral curves.
In this case, there is a kind of Frobenius-twisted NAHT:
fix the rank, but not the degree;
the algebraic stack  $h_{\rm dR}:{\mathcal M}_{\rm dR}(C) \to A(C^{(1)})$ of rank $r$ flat connections on the curve $C$
(necessarily of degree a multiple of the characteristic $p$) is \'etale locally equivalent,
over the base $A(C^{(1)})$ of the de Rham-Hitchin morphism $h_{\rm dR}$, to the algebraic stack 
$h:\mathcal{M}_{\rm Dol}(C^{(1)}) \to A(C^{(1)})$ of Higgs bundles
over the Frobenius twist $C^{(1)}$ of the curve $C,$ where $h$ is the associated Hitchin morphism. The base $A(C^{(1)})$ parameterizes the  degree $r$ spectral curves inside
the cotangent bundle $T^*{C^{(1)}}.$  This NAHT has been studied in \cite{OV}
(which also generalizes the Deligne-Illusie Decomposition Theorem as in \cite{DeIl}), and in \cite{BB07}, \cite{Groch}, \cite{ChZh}, (which also have applications to the Geometric Langlands Program in characteristic $p$). 

One key object in this set-up  is the  algebraic stack $\mathcal S\to A(C^{(1)})$ of relative splittings of the Azumaya algebra
$\mathcal D$ on the cotangent bundle $T^*C^{(1)}.$
It classifies the splittings, which are suitable rank $p$ vector bundles, of the Azumaya algebra restricted to the degree $r$
spectral curves
in $A(C^{(1)})$.
The stack $\mathcal S$ is a torsor for the Picard stack $\mathcal P\to A(C^{(1)})$ of line bundles on the spectral curves.

The twisted NAHT  can be stated as follows:
there is a natural isomorphism of stacks: 
$\mathcal{S}  \times^{\mathcal P}  \mathcal{M}_{\rm Dol}(C)  
\stackrel{\simeq}\to
\mathcal{M}_{\rm dR}(C^{(1)}).$

This isomorphism does not respect degrees, slopes, nor (semi)stability. It is therefore natural to look for a better object than $\mathcal S.$

The paper \cite[Definition 3.44]{Groch} introduced the stack of good splittings $\mathcal S^0,$ i.e. the  connected component  of the stack of splittings of a certain fixed total degree 
inside the stack of splittings 
$\mathcal S$ and it erroneously claimed \cite[Lemma 3.46]{Groch} that
it preserves semistability. This paper also serves as an erratum to this claim, which is replaced here by the isomorphism (\ref{main iso}).
See Remark \ref{mainmis} for a discussion.

While the algebraic moduli stacks $\mathcal M$ of fixed degree and rank admit quasi-projective adequate (tame, resp.) moduli schemes
of semistable (stable, resp.) objects,  the substacks of $\mathcal S$ and $\mathcal P$ of fixed degrees are inherently non-separated and of infinite type: for example,  the relative Picard stack of  fixed degree line bundles on a relative curve with a reducible special fiber yields a non-separated moduli scheme of infinite type, see \cite[p.210]{BLR}.
The stack $\mathcal P^o$ of multidegree zero line bundles on spectral curves admits a
tame quasi-projective moduli space $P^o$, which is in fact a smooth group scheme
over $A(C^{(1)})$ with geometrically connected fibers.
The stack $\mathcal S$ is not a trival $\mathcal P$-torsor, so it is a priori unclear
if there should be an analogue of $\mathcal P^o,$ denoted by   $\mathcal S^o,$  contained in  $\mathcal S.$

In this paper, we identify such an open substack $\mathcal S^o$ of $\mathcal S$ in Definition \ref{defvgs} and we name it the stack of very good splittings.
We show that $\mathcal S^o$ is naturally a $\mathcal P^o$-torsor in Lemma \ref{poso}. 
We show in Theorem \ref{vgsch} that  $\mathcal S^o$  admits a  good and tame quasi-projective moduli space $S^o$
which is a $P^o$-torsor.
We show in Theorem \ref{sseq} that the restriction $\mathcal S^o \times^{\mathcal P^o}
\mathcal M_{\rm Dol} (C) \simeq \mathcal M_{\rm dR} (C^{(1)})$
of 
$\mathcal S \times^{\mathcal P}
\mathcal M_{\rm Dol} (C) \simeq \mathcal M_{\rm dR} (C^{(1)})$
preserves (semi)stability because it sends objects of degree $d$ to objects of
degree $dp,$ so that it preserves slopes, thus yielding an $A(C^{(1)})$-isomorphism
at the level of moduli spaces of (semi)stable objects (Theorem \ref{schois}):
\begin{equation}\label{main iso}
S^o\times^{P^o} M^{ss}_{\rm Dol} (C^{(1)},d) \simeq_{A(C^{(1)})} M^{ss}_{\rm dR} (C,dp).
\end{equation}

Note that there is no natural identification of Higgs and de Rham objects, they are related 
by splittings of the Azumaya algebra on spectral curves, for which there is no natural
global choice, for $S^o$ is not a trivial $P^o$-torsor, see Lemma \ref{notriv}.

Nevertheless, by exploiting the geometric connectedness of the fibers
of $P^o/A(C^{(1)}),$ we prove a cohomological identity as an application of the isomorphism
(\ref{main iso}), namely we prove in Theorem \ref{isoic} that for every $d\in \mathbb Z$ there are distinguished isomorphisms
of  graded $\oql$-vector spaces
between the intersection cohomology groups of the two moduli spaces:
\begin{equation}\label{applic}
I\!H^*(M_{\rm Dol}^{ss}(C^{(1)}, d)) \simeq I\!H^*(M_{\rm dR}^{ss}(C, dp)).
\end{equation} 

We produce such isomorphisms by proving first that there is a
canonical identification  of the  perverse cohomology sheaves for the de Rham-Hitchin and Hitchin morphisms (Theorem \ref{isodec}).
At this stage, according to the Decomposition Theorem \cite{BBD}, there are many possible isomorphisms of the form (\ref{applic}).
In Remark \ref{dspl}, based on the choices of relative ample classes, we assemble the canonical isomorphisms of perverse cohomology sheaves of the direct image complexes into five distinguished isomorphisms (\ref{applic}) via the Decomposition Theorem and the Deligne splitting \cite{D}, \cite[Thm. 1.1.1]{dC13}.
 We ignore if any of these splittings produces,
when the moduli spaces are nonsingular, an isomorphism of cohomology rings.

An isomorphism analogous to (\ref{main iso}) was claimed in \cite[Lemma 3.46]{Groch}
by using another  open algebraic substack $\mathcal S^0$ of $\mathcal S$
of splittings of a suitable total degree on degree $r$ spectral curves. 
This algebraic stack $\mathcal S^0$ is akin to the Picard stack $\mathcal P^0$
of line bundles of total degree zero on degree $r$ spectral curves,
and is in fact a torsor for it.
It strictly contains $\mathcal S^o$ as an open substack. However, as simple examples in Remark \ref{mainmis} show,
it is not true that
using $\mathcal S^0$ in place of $\mathcal S^o$ preserves (semi)stability. Finally, the algebraic stack $\mathcal S^0$
is of infinite type just like $\mathcal P^0$  and it gives rise to
a non-separated moduli algebraic space.

{\bf Acknowledgments.} We heartfully thank Andres J. Fernandez Herrero 
for patiently explaining some aspects of the theory of algebraic stacks. We thank Antonio Rapagnetta for very useful discussions on torsion free sheaves. We thank Davesh Maulik for asking about the  possibility of having a decomposition theorem along the lines of Theorem \ref{isoic}.
The first and third-named authors have been partially supported by NSF grants
DMS-1901975 and 
DMS-2200492. The first-named author has been partially supoorted by a Simons Fellowship in Mathematics Award n. 672936. The second-named author was supported by an NSERC Discovery Grant (RGPIN-2019-05264) and an Alfred P. Sloan fellowship.
Mark Andrea de Cataldo dedicates this paper to the memory of Tony, with friendship and love.

\section{The stack of relative splittings}

\subsection{Review of the Relative Picard Stack/Functor/Sheaf}\;

Let $B$ be a scheme.
Let $f: Y\to A$ be a proper and flat morphism of finite presentation between $B$-schemes.

Let $B$-$\mathrm{Sch}^{\text{\'et}}$ (resp. $B$-$\mathrm{Sch}^{\mathrm{fppf}}$) be the site of $B$-schemes equipped with the \'etale (resp. fppf) Grothendieck topology. Recall that an algebraic stack over $B$-$\mathrm{Sch}^{\mathrm{fppf}}$ is automatically an algebraic stack over $B$-$\mathrm{Sch}^{\text{\'et}}$ and vice versa, see \cite[\href{https://stacks.math.columbia.edu/tag/076U}{Tag 076U}]{Sta22}. 
In this note, unless stated otherwise, we use algebraic stacks over $B$-$\mathrm{Sch}^{\text{fppf}}$.

Let $\CP^{st}_{Y/A}$ be the relative Picard stack over $A$, i.e., for every $A$-scheme $U$, the groupoid $\CP^{st}_{Y/A}(U)$ consists of all invertible sheaves on $Y_U$; see \cite[\href{https://stacks.math.columbia.edu/tag/0372}{Tag 0372}]{Sta22} for details. 
The stack $\CP^{st}_{Y/A}$ is algebraic; see \cite[\href{https://stacks.math.columbia.edu/tag/0D04}{Tag 0D04}]{Sta22}.

\begin{rmk}
[About the convention on stacks] A stack can be presented as either a special kind of fibered category, or a special kind of lax 2-functor as defined in \cite[Definition 3.10]{Vistoli}. The two presentations are equivalent, see \cite[Proposition 3.11, \S3.1.3]{Vistoli}. In this paper we always present a stack as a lax 2-functor, because it is sometimes more concise. 
Furthermore, note that a lax 2-functor in Vistoli's paper \cite{Vistoli}, which is also called a pseudo-functor in \textit{loc.cit,} is just a functor from a category to a 2-category in Lurie's terminology, see \cite[\href{https://kerodon.net/tag/008K}{Tag 008K}]{Kerodon}. 
In this paper, we abbreviate lax 2-functor to  2-functor.
\end{rmk}

Let $\CP^{fun}_{Y/A}$ be the functor from the category of $A$-schemes to abstract groups that sends an $A$-scheme $U$ to $\frac{Object(\CP^{st}_{Y/A}(U))}{\cong}$, i.e., the group of isomorphism classes of line bundles on $Y_U$; see \cite[\href{https://stacks.math.columbia.edu/tag/0D24}{Tag 0D24}]{Sta22}.

The functor $\CP^{fun}_{Y/A}$ is seldom a sheaf.
Let $\CP^{sh}_{Y/A}$ be the fppf-sheafification of $\CP^{fun}_{Y/A}$. 
This sheaf $\CP^{sh}_{Y/A}$ coincides with the Picard functor as defined in \cite[end of \S2]{Kl05}.

For any scheme $X$, let the abstract group $\CP^{abs}(X)$ be the absolute Picard group of $X$, i.e., the group of isomorphism classes of invertible sheaves on $X$. We have that $\CP^{abs}(Y_U)=\CP^{fun}_{Y/A}(U)$ for all $A$-schemes $U$.
Below we collect some basic properties of the Picard groups, functors, and stacks:

\begin{lem} \label{trivprop}
Let $Y/A$ be flat, proper, and of finite presentation.
If for all $A$-scheme $T$, we have that $\CO_{T}\xrightarrow{\sim}f_{T,*}\CO_{Y_T}$, then: 
\begin{enumerate}
    \item We have the following exact sequence of abstract groups for all $T$:
\[0\to \CP^{abs}(T)\to \CP^{abs}(Y_T) \to \CP^{sh}(Y/A)(T).\]
\item  The fppf sheaf $\CP^{sh}_{Y/A}$ is an algebraic space, quasi-separated and locally of finite presentation over $A$.

\item The stack $\CP^{st}_{Y/A}$ is a $\BG_m$-gerbe over the algebraic space $\CP^{sh}_{Y/A}$.

\item The stack $\CP^{st}_{Y/A}$ is surjective smooth.(in particular locally of finite presentation) over the algebraic space $\CP^{sh}(Y/A)$;

\item The stack $\CP^{st}_{Y/A}$ is locally of finite presentation over $A$.

\item If, furthermore, we have that $f:Y\to A$ is of relative dimension $\le 1$, then both $\CP^{st}_{Y/A}$
and $\CP^{sh}_{Y/A}$ are smooth over $A$.
\end{enumerate}
\end{lem}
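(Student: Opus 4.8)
The plan is to handle the six items in a non‑circular order: items (1) and (6) by direct cohomological computation, and the geometric statements (2)--(5) by combining the algebraicity of $\CP^{st}_{Y/A}$ (already cited as \cite[Tag 0D04]{Sta22}) with the observation that, under the standing hypothesis, the tautological morphism $\CP^{st}_{Y/A}\to\CP^{sh}_{Y/A}$ is a $\BG_m$-gerbe.

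For item (1): the hypothesis $\CO_T\xrightarrow{\sim}f_{T,*}\CO_{Y_T}$ is stable under base change, so the fppf (equivalently \'etale) pushforward of $\BG_m$ along $f_T\colon Y_T\to T$ is $\BG_{m,T}$ --- for an fppf morphism $T'\to T$ the ring isomorphism $\Gamma(T',\CO_{T'})\xrightarrow{\sim}\Gamma(Y_{T'},\CO_{Y_{T'}})$ carries units to units, so $\BG_m(Y_{T'})=\BG_m(T')$. I would then feed this into the five‑term exact sequence of the Leray spectral sequence $H^p(T,R^qf_{T,*}\BG_m)\Rightarrow H^{p+q}(Y_T,\BG_m)$, using $H^1(-,\BG_m)=\CP^{abs}(-)$ (Hilbert 90) and the identification $\CP^{sh}(Y/A)(T)=(R^1f_{T,*}\BG_m)(T)$ (see \cite{Kl05}). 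The first three terms of that sequence are precisely $0\to\CP^{abs}(T)\to\CP^{abs}(Y_T)\to\CP^{sh}(Y/A)(T)$; exactness does not continue, as the next term $H^2(T,\BG_m)$ is in general nonzero.

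For items (2)--(5): I would first show that the morphism $\pi\colon\CP^{st}_{Y/A}\to\CP^{sh}_{Y/A}$ sending a line bundle to its isomorphism class is a $\BG_m$-gerbe. That it is a gerbe is immediate from the construction of the sheafification: $\pi$ is an fppf epimorphism, and any two objects of $\CP^{st}_{Y/A}(U)$ with the same image become isomorphic fppf‑locally on $U$. It is banded by $\BG_m$ because $\underline{\operatorname{Aut}}(L)=\underline{\operatorname{Aut}}(\CO_{Y_U})=f_{U,*}\BG_{m,Y_U}=\BG_{m,U}$ for every line bundle $L$ on $Y_U$, by the hypothesis on $f_{U,*}\CO_{Y_U}$ exactly as in item (1). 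Item (2) then follows: a gerbe morphism is flat, locally of finite presentation and surjective, $\CP^{st}_{Y/A}$ is algebraic, so by fppf descent of algebraicity $\CP^{sh}_{Y/A}$ is algebraic, hence (being a sheaf) an algebraic space, and quasi‑separatedness and local finite presentation over $A$ descend from the corresponding properties of $\CP^{st}_{Y/A}$; alternatively (2) is Artin's classical theorem. For item (4): a $\BG_m$-gerbe over a scheme becomes, after a further fppf base change, a product with $B\BG_m$, and $B\BG_m$ is smooth, surjective and of finite presentation over the base (since $\Spec\to B\BG_m$ is a $\BG_m$-torsor); these properties may be checked fppf‑locally on the target, so $\pi$ has all three. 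Item (3) is the combination of the gerbe statement with (2), and item (5) follows by composing $\pi$ with the locally finitely presented morphism $\CP^{sh}_{Y/A}\to A$ (or by quoting \cite[Tag 0D04]{Sta22}).

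For item (6): assume the fibers of $f$ have dimension $\le 1$ and verify the infinitesimal criterion for $\CP^{st}_{Y/A}\to A$. Given a square‑zero closed immersion $T_0\hookrightarrow T$ of affine $A$-schemes with ideal $\mathcal I$ and a line bundle $L_0$ on $Y_{T_0}$, the exact sequence $0\to\mathcal F\to\CO^*_{Y_T}\to\CO^*_{Y_{T_0}}\to 0$ with $\mathcal F=\CO_{Y_{T_0}}\otimes_{\CO_{T_0}}\mathcal I$ shows that the obstruction to extending $L_0$ to $Y_T$ lies in $H^2(Y_{T_0},\mathcal F)$. Since $f_{T_0}$ is proper with fibers of dimension $\le 1$ one has $R^qf_{T_0,*}\mathcal F=0$ for $q\ge 2$ (theorem on formal functions and Grothendieck vanishing on fibers, extended to quasi‑coherent $\mathcal F$ by passing to a filtered colimit of coherent sheaves), and since $T_0$ is affine $H^p(T_0,-)$ vanishes for $p\ge 1$ on quasi‑coherent sheaves; the Leray spectral sequence then gives $H^2(Y_{T_0},\mathcal F)=0$. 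Hence every $L_0$ lifts, so $\CP^{st}_{Y/A}$ is formally smooth over $A$, and with item (5) it is smooth over $A$; and $\CP^{sh}_{Y/A}$ is smooth over $A$ because $\pi$ is smooth surjective and smoothness of a morphism descends along the fppf cover $\pi$ of the source. The one genuinely computational point is the vanishing $H^2(Y_{T_0},\mathcal F)=0$ in item (6); the main care needed elsewhere is organizational --- one must establish the gerbe statement before, not after, deducing algebraicity of $\CP^{sh}_{Y/A}$ --- and the sole heavy external input is Artin's algebraicity theorem for $\CP^{st}_{Y/A}$, which we may cite.
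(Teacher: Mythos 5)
Your proposal is correct in substance, and it is instructive to note that the paper's own ``proof'' consists entirely of citations to Stacks Project tags; the arguments you supply are essentially the content of those tags, so you have re-proved rather than bypassed them. Concretely: your Leray/low-degree-terms argument for (1) is the content of Tag 0D27; your route to (2)--(5) via the $\BG_m$-gerbe structure of $\CP^{st}_{Y/A}\to\CP^{sh}_{Y/A}$ matches Tags 0DME, 0DN8, 0DNP (the Stacks Project phrases the gerbe statement via flatness and finite presentation of the inertia, which under the hypothesis $\CO_T\simeq f_{T,*}\CO_{Y_T}$ is exactly the banding computation you perform); and your deformation-theoretic proof of (6), with the obstruction in $H^2(Y_{T_0},f_{T_0}^*\mathcal I)$ killed by relative dimension $\le 1$ plus affineness of $T_0$, is the argument behind Tags 0DPJ/0DPK. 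The one place where your wording is looser than what you need is item (2): the gerbe morphism $\pi$ is not representable by algebraic spaces, so ``fppf descent of algebraicity'' along $\pi$ is not a statement you can quote directly; to make it precise you should either compose $\pi$ with a smooth atlas $U\to\CP^{st}_{Y/A}$ (the composite $U\to\CP^{sh}_{Y/A}$ \emph{is} representable by algebraic spaces, flat, locally of finite presentation and surjective, so the bootstrap theorem applies), or invoke the criterion that an algebraic stack whose inertia is flat and locally of finite presentation is a gerbe over an algebraic space --- which is precisely the route of the cited tags --- or fall back on Artin's theorem as you indicate. With that repair, and granting the standard facts that smoothness, flatness and finite presentation are fppf-local on source and target for morphisms of algebraic stacks, your proof is complete and in effect more self-contained than the paper's.
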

\begin{proof}
We just give the reference for the proofs in the Stacks Project:
(1) is proved in \cite[\href{https://stacks.math.columbia.edu/tag/0D27}{Tag 0D27}]{Sta22}; (2) is proved in \cite[\href{https://stacks.math.columbia.edu/tag/0D2C}{Tag 0D2C}, \href{https://stacks.math.columbia.edu/tag/0DNI}{Tag 0DNI}]{Sta22};
(3) is proved in \cite[\href{https://stacks.math.columbia.edu/tag/0DME}{Tag 0DME}]{Sta22}; (4) follows from (3) and \cite[\href{https://stacks.math.columbia.edu/tag/0DN8}{Tag 0DN8}, \href{https://stacks.math.columbia.edu/tag/0DNP}{Tag 0DNP}]{Sta22}; 
(5) follows from (2), (4) and \cite[\href{https://stacks.math.columbia.edu/tag/06Q3}{Tag 06Q3}]{Sta22};
(6) is proved in \cite[\href{https://stacks.math.columbia.edu/tag/0DPJ}{Tag 0DPJ}, \href{https://stacks.math.columbia.edu/tag/0DPK}{Tag 0DPK}]{Sta22}.
\end{proof}

\subsection{The stack of relative splittings}\;

Let $k$ be an algebraically closed field of characteristic $p>0$.
Let $C$ be an integral smooth projective curve of genus $g>1$ over $k$.
Let $Fr: C\to C^{(1)}$ be the relative Frobenius morphism over $k$.
By \cite[Theorem 2.2.3]{BMR}, the sheaf of crystalline differential operators $D_C$ over $C$ gives rise to an Azumaya algebra $\CD$ over the cotagent bundle $T^*C^{(1)}$ of $C^{(1)}$, with an identity of $\CO_{C^{(1)}}$-algebras:
\begin{equation}
    \label{dcd}
    Fr_*D_C=\pi_*\CD,
\end{equation}
where $\pi:T^*C^{(1)}\to C^{(1)}$ is the natural projection. 

Let $A(C,r)=\bigoplus_{i=1}^rH^0(C,\omega_C^{\otimes i})$ be the degree $r$ Hitchin base for $C$. In this paper, we will use the Hitchin base $A(C^{(1)},r)$ more often. We will drop the decoration $r$ when it is clear in the context.

Let $Y\to C^{(1)}\times_k A(C^{(1)})$ be the universal spectral degree $r$ curve over $C^{(1)}$.
Let $a:Y\to T^*C^{(1)}$ be the composition of morphisms of $k$-schemes:
\[a: Y\hookrightarrow T^*C^{(1)}\times_k A(C^{(1)}) \to T^*C^{(1)},\]
where the first arrow is the natural inclusion, and the second arrow is the projection morphism.
The pullback $\CD_{Y}:= a^*\CD$ is an Azumaya algebra over $Y$. 
Given a closed point $b$ in $A(C^{(1)})$, the base change $Y_b:=Y\times_{A(C^{(1)})} b$ is the corresponding spectral curve, and the restriction $\CD_{Y}|_{Y_b}$ coincides with the restriction $\CD|_{Y_b}$ of $\CD$ on $T^*C^{(1)}$ to the subscheme $Y_b\subset T^*C^{(1)}$.

A stack that plays a central role in the Non--Abelian Hodge type theorems in characteristic $p$ is the stack of relative splittings of $\CD_{Y}$:

\begin{defn}[The stack of relative splittings]\label{defS}
We define the stack $\CS$ of relative splittings (of $\CD$ over $Y/A$) to be the 2-functor that sends an $A(C^{(1)})$-scheme $U$ to the following groupoid $\CS(U)$:
\begin{enumerate}
    \item Objects of $\CS(U)$ consists of pairs $(E,\phi)$, where 
    \begin{enumerate}
        \item $E$ is a vector bundle on $Y\times_{A(C^{(1)})} U=: Y_U$;
        \item $\phi: End_{\mathcal{O}_{Y_U}}(E)\xrightarrow{\sim} \CD_{Y}|_{Y_U}$ is an isomorphism of $\mathcal{O}_{Y_U}$-algebras.
    \end{enumerate}
    \item Given two objects $(E,\phi)$ and $(E',\phi')$, a morphism between them is an isomorphism of vector bundles $E\xrightarrow{\sim} E'$ that is compatible in the evident way with $\phi$ and $\phi'$.
\end{enumerate}
\end{defn}

Below we often simply write $E$ for a splitting $(E,\phi)$.

\begin{prop}
\label{pstor}
The 2-functor $\CS$ is a smooth algebraic stack over $A(C^{(1)})$.
Furthermore, it is a torsor under the Picard stack $\CP^{st}_{Y/A}$, i.e., there is a morphism of algebraic stacks
\[act: \CP^{st}_{Y/A}\times_{A(C^{(1)})} \CS\to \CS,\]
such that for every $A(C^{(1)})$-scheme $U$ and every object $L\in \CP^{st}_{Y/A}(U)$, the induced morphism $act(L,-)$ defines an auto-equivalence of the groupoid $\CS(U)$.
\end{prop}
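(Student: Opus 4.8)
The plan is to establish the three claims in turn: (a) $\CS$ is an algebraic stack over $A := A(C^{(1)})$; (b) $\CS$ carries a natural action of $\CP^{st}_{Y/A}$ making it a pseudo-torsor; (c) the pseudo-torsor is in fact a torsor, i.e.\ it has local sections, which simultaneously yields smoothness by transport of structure from $\CP^{st}_{Y/A}$ (smooth over $A$ by Lemma \ref{trivprop}(6), since $Y/A$ has relative dimension $\le 1$).

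First I would construct the action morphism $act$. Given an $A$-scheme $U$, a line bundle $L \in \CP^{st}_{Y/A}(U)$, and a splitting $(E,\phi) \in \CS(U)$, set $act(L,(E,\phi)) := (E \otimes_{\CO_{Y_U}} L, \phi')$, where $\phi'$ is the composite $End_{\CO_{Y_U}}(E\otimes L) \xrightarrow{\sim} End_{\CO_{Y_U}}(E) \xrightarrow{\phi} \CD_Y|_{Y_U}$, the first isomorphism being the canonical one coming from the fact that tensoring by an invertible sheaf does not change the endomorphism algebra. Functoriality in $U$ and compatibility with pullbacks are routine to check, so this defines a morphism of stacks $act\colon \CP^{st}_{Y/A}\times_A \CS \to \CS$; the groupoid-level statement that $act(L,-)$ is an auto-equivalence (with quasi-inverse $act(L^{-1},-)$) is immediate from $E \otimes L \otimes L^{-1} \cong E$ compatibly with the $\phi$'s. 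To see $\CS$ is a \emph{pseudo}-torsor, I would check that the map $(\CP^{st}_{Y/A}\times_A \CS) \to \CS\times_A \CS$, $(L,E)\mapsto (act(L,E),E)$, is an equivalence. Surjectivity on objects and full faithfulness reduce, after a faithfully flat base change, to the following purely local statement: two splittings $(E,\phi)$, $(E',\phi')$ of the same Azumaya algebra over an affine scheme differ by tensoring with a unique-up-to-isomorphism line bundle, and the isomorphisms between two such data form a torsor under $\Gamma(\CO^\times)$. This is the standard fact that splittings of a fixed Azumaya algebra of degree $p$ form a gerbe banded by $\BG_m$ over the Picard stack; concretely, $E' \otimes_{\CD_Y|_{Y_U}} \hom_{\CO_{Y_U}}(E, E')$ recovers the twisting line bundle, using that $E$ is a locally free module over the Azumaya algebra of rank $1$ in the appropriate sense.

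Next, algebraicity and the torsor property (local existence of sections). For algebraicity I would argue that $\CS$ is locally of finite presentation over $A$ by a limit argument (the data $(E,\phi)$ is finitely presented), that the diagonal is representable — an isomorphism of splittings is an isomorphism of the underlying vector bundles intertwining the $\phi$'s, so the $Isom$-functor is a locally closed subscheme of the $Isom$-scheme of the underlying bundles, which is representable since $Y/A$ is proper flat of finite presentation — and then produce a smooth presentation. The cleanest route for the presentation, and for the torsor property, is to exhibit a splitting \emph{fppf-locally} on $A$: étale-locally on $A$ one can trivialize the Azumaya algebra $\CD_Y$ over $Y$ after a further cover (Azumaya algebras are étale-locally matrix algebras, and one uses properness of $Y/A$ together with the deformation theory of $\CD_Y$ — which is unobstructed in the relevant sense on a curve — to spread a pointwise splitting out; alternatively invoke the known existence of splittings over the spectral curves from the Bezrukavnikov–Braverman / Ogus–Vologodsky theory referenced in the introduction). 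Once a section exists fppf-locally, $\CS$ becomes fppf-locally isomorphic to $\CP^{st}_{Y/A}$, hence algebraic and smooth over $A$ by descent and Lemma \ref{trivprop}(6).

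The main obstacle is the existence of local sections, i.e.\ that $\CS \to A$ is genuinely a torsor and not merely a pseudo-torsor: one must produce an actual splitting of $\CD_Y$ over $Y_U$ for some fppf (or étale) cover $U \to A$. The naive étale-local triviality of the Azumaya algebra only gives a splitting over an étale cover of the \emph{total space} $Y$, which need not descend to a cover of $A$; the correct input is that the restriction of $\CD$ to each spectral curve $Y_b$ admits a splitting (this is where the positive-characteristic geometry genuinely enters, via the structure of $\CD = \CD_C$ as in \cite{BMR} and the fact that spectral curves are integral local complete intersection curves), together with a cohomological vanishing ensuring such splittings deform in families over $A$. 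Everything else — constructing $act$, checking the pseudo-torsor axioms, representability of the diagonal, and deducing smoothness — is formal once this local existence is in hand.
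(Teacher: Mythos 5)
Your proposal is correct and follows essentially the same route as the paper, whose proof is a citation of \cite[Theorem 3.24, Lemma 3.26]{Groch} and \cite[Lemma 3.7, Theorem 3.8]{ChZh} together with exactly your sketch: the stack property via fppf descent of vector bundles, the torsor structure from the fact that two splittings of an Azumaya algebra differ by tensoring with a line bundle plus the (local) existence of splittings, and algebraicity/smoothness transported \'etale-locally from $\CP^{st}_{Y/A}$. You correctly isolate the one non-formal input---existence of splittings of $\CD$ on spectral curves locally over $A(C^{(1)})$---and, like the paper, defer it to the cited literature; only minor slips (the twisting line bundle is $\hom_{\CD_Y|_{Y_U}}(E,E')$, not $E'\otimes_{\CD_Y|_{Y_U}}\hom_{\CO_{Y_U}}(E,E')$, and spectral curves need not be integral) would need cleaning up.
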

\begin{proof}
This is proved in \cite[Theorem 3.24, Lemma 3.26]{Groch}, with the caveat in Remark \ref{gerbenot} below. For general reductive groups, this is proved in \cite[Lemma 3.7, Theorem 3.8]{ChZh}. We indicate the idea of the proof: since vector bundles satisfy fppf descent, it is routine to check that $\CS$ is indeed a stack over $A(C^{(1)})$. Since any two splittings of an Azumaya algebra on a scheme differ by tensoring by a line bundle, and since such splittings do exist, we see that $\CS$ is a torsor under $\CP^{st}_{Y/A}$. The algebraicity and smoothness of $\CS$ then follow from those for $\CP^{st}_{Y/A}$, because they can be checked locally in the \'etale topology. 
\end{proof}

\begin{rmk}
\label{gerbenot}
Note that in \cite[Definition 3.22]{Groch}, the morphisms in the groupoid $\CS(U)$ are defined to be  pairs $(L,\gamma)$ where $L$ is a line bundle on $Y_U$ and $\gamma: E\xrightarrow{\sim} F\otimes L$. We do not adopt this definition because  under this definition, if $\CS(U)$ is nonempty, then all objects in it are isomorphic to each other.
In the definition we adopt, the groupoid $\CS(U)$ is equivalent to the groupoid $\CP^{st}_{Y/A}(U)$.
\end{rmk}

In \cite[Theorem 2.2.3]{BMR}, it is proved that the Azumaya algebra $\CD$ on $T^*C^{(1)}$ is not trivial. Using a similar argument, we show that 

\begin{lem}\label{notriv}
The $\CP^{st}_{Y/A}$-torsor $\CS$ is not the trivial torsor.
\end{lem}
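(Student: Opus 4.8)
The plan is to follow the strategy used in \cite[Theorem 2.2.3]{BMR} to show that $\CD$ itself is a nontrivial Azumaya algebra on $T^*C^{(1)}$, and to leverage the torsor structure from Proposition \ref{pstor} together with the exact sequence in Lemma \ref{trivprop}(1). The key observation is that $\CS$ being a trivial $\CP^{st}_{Y/A}$-torsor means exactly that $\CS(A(C^{(1)}))$ is nonempty, i.e.\ that there exists a global splitting $(E,\phi)$ of $\CD_Y$ over the \emph{entire} universal spectral curve $Y \to A(C^{(1)})$. So I would argue by contradiction: suppose such a global splitting $E$ exists.

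First I would restrict attention to a single, well-chosen spectral curve. The most convenient choice is the zero section $b_0 = 0 \in A(C^{(1)})$, whose spectral curve $Y_{b_0}$ is the $p$-fold thickening of the zero section of $T^*C^{(1)}$ — more precisely the $r$-th order neighborhood is the reduced curve $C^{(1)}$ when $r=1$, but for general $r$ one should pick a point $b$ so that $Y_b$ is integral (a smooth spectral curve exists by the usual Bertini-type argument since $\omega_C^{\otimes i}$ are very ample for $i$ large), reducing to the case of a smooth connected curve mapping finitely to $C^{(1)}$. Restricting the hypothetical global splitting $E$ to $Y_b$ would produce a splitting of $\CD_Y|_{Y_b} = \CD|_{Y_b}$ as a vector bundle of rank $p$ on $Y_b$. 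The strategy is then to derive a numerical contradiction exactly as in \cite{BMR}: a splitting module $E$ of the restricted Azumaya algebra would force a congruence/divisibility condition on $\deg E$ or on characteristic classes (the Brauer-theoretic obstruction to splitting $\CD$ is detected by the fact that the restriction of $\CD$ to $Y_b$ has a nonzero class in the Brauer group, coming from the $p$-curvature / the nontriviality of $\CD$ along the zero section), which cannot be satisfied.

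An alternative and perhaps cleaner route, which I would pursue in parallel, uses Lemma \ref{trivprop}(1) directly. A global splitting $E$ over $Y$ pulls back along any base change $T \to A(C^{(1)})$, and triviality of the torsor would, via the $\CP^{st}_{Y/A}$-action, give a trivialization compatible with all base changes. Taking $T$ to be the spectrum of the local ring at a carefully chosen point, or a suitable geometric fiber, and using that $\CS$ is a $\CP^{st}_{Y/A}$-torsor, nontriviality of $\CS$ is equivalent to nontriviality of the class of $\CS$ in $H^1_{\mathrm{fppf}}(A(C^{(1)}), \CP^{sh}_{Y/A})$ modulo the gerbe; so it suffices to exhibit one point $b$ where the fiber splitting of $\CD|_{Y_b}$ does not extend, or more simply where $\CD|_{Y_b}$ is itself nontrivial as an Azumaya algebra — and \cite[Theorem 2.2.3]{BMR}'s computation already shows $\CD$ is nontrivial on a neighborhood of the zero section of $T^*C^{(1)}$, which is contained in $Y_b$ for appropriate $b$ (or one takes $b$ close to $0$).

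The main obstacle I anticipate is the global-to-local bookkeeping: a global section of the torsor $\CS$ restricts to a splitting on every fiber $Y_b$ simultaneously, but the \cite{BMR} nontriviality statement is about $\CD$ on $T^*C^{(1)}$ as a whole, not a priori about its restriction to a fixed spectral curve. So the delicate point is to pin down a specific spectral curve $Y_b \subset T^*C^{(1)}$ to which the restriction of $\CD$ is still nontrivial (equivalently, for which the restricted Azumaya algebra has no splitting module), and to check that the \cite{BMR} argument — which ultimately rests on the $p$-curvature of the canonical connection and a degree/Chern-class obstruction — survives restriction to that curve. Concretely I expect this to reduce to the statement that the restriction of the tautological splitting obstruction to $Y_b$ is detected by pairing with the fundamental class of $Y_b$, which is nonzero for $b$ generic; once that is in place the contradiction with the existence of a global $E$ is immediate, and the nontriviality of the $\CP^{st}_{Y/A}$-torsor $\CS$ follows.
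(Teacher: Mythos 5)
There is a genuine gap, and it lies exactly at the point you flag as ``delicate'': your strategy is to restrict the hypothetical global splitting to a single well-chosen spectral curve $Y_b$ and to show that $\CD|_{Y_b}$ is a nontrivial Azumaya algebra on $Y_b$. This cannot work, because $\CD|_{Y_b}$ splits on \emph{every} geometric fiber $Y_b$ --- that is precisely why $\CS$ is a (fppf-locally trivial) torsor under the Picard stack in the first place (Proposition \ref{pstor}, which rests on the fiberwise existence of splittings, cf.\ \cite[Lemma 3.23]{Groch} and Lemma \ref{vgono}, which exhibits a splitting even on the most degenerate spectral curve $rC^{(1)}$). For a spectral curve over the algebraically closed field $k$ the Brauer obstruction vanishes, so no choice of $b$, generic or otherwise, yields a curve on which $\CD$ fails to split; in particular the proposed ``pairing with the fundamental class of $Y_b$'' is identically zero. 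The auxiliary claim that a neighborhood of the zero section of $T^*C^{(1)}$ is ``contained in $Y_b$ for appropriate $b$'' is also false: spectral curves are one-dimensional closed subschemes and contain no open subset of the surface. In short, the nontriviality of the torsor $\CS$ is not detected fiberwise; it is a statement about the non-existence of a \emph{globally consistent} family of splittings over the whole base $A(C^{(1)})$, equivalently over the universal curve $Y$.

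The paper's proof therefore never restricts to a fiber. It observes that a section of $\CS$ over $A(C^{(1)})$ is the same as a rank $p$ bundle $E$ on the total space $Y$ with $\CD_Y\cong End_{\CO_Y}(E)$, which Zariski-locally is a matrix algebra and hence has nonzero zero divisors. It then shows this is impossible by transporting the strong form of \cite[Theorem 2.2.3]{BMR} --- Zariski-locally on $T^*C^{(1)}$ the algebra $\CD$ has no nonzero zero divisors --- up to $Y$: by Lemma \ref{affb} the universal curve $Y$ is an affine space bundle over $T^*C^{(1)}$, so locally one is tensoring with a polynomial ring $R[t_1,\dots,t_n]$, and the absence of zero divisors survives this extension. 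If you want to salvage your approach, you would need to replace ``restrict to one spectral curve'' by ``restrict to nothing at all'' and instead argue on $Y$ itself along these lines; the $2$-dimensional (indeed higher-dimensional, after passing to $Y$) nature of the base of the Azumaya algebra is essential, and is exactly what is lost upon restricting to a curve.
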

\begin{proof}
By contradiction, assume that $\CS$ is the trivial torsor.
Then there is a section $s: A(C^{(1)})\to \CS$.
By the definition of the stack $\CS$, the section $s$ defines a splitting of the Azumaya algebra $\CD_{Y}$ on the universal spectral curve $Y$.
Thus there is a vector bundle $E$ of rank $p$ on $Y$ with $\CD_{Y}\cong End_{\CO_{Y}}(E)$.
Zariski locally over $Y$, the $\CO_{Y}$-algebra $\CD_{Y}\cong End_{\CO_{Y}}(E)$ is isomorphic to the algebra $End_{\CO_{Y}}(\CO_{Y}^{\oplus p})$, which has non-zero zero divisors.

Claim: Zariski locally, the $\CO_{Y}$-algebra $\CD_{Y}$ does not have non-zero zero divisors.
Clearly, proving the claim proves the lemma. 

The following proof  of the lemma uses \cite[Theorem 2.2.3]{BMR} to the effect that, Zariski-locally over $T^*C^{(1)}$, the Azumaya algebra $\CD$ does not have non-zero zero divisors. 

We can replace $Y$ with an affine scheme $Spec(R')$ and $T^*C^{(1)}$ with an affine scheme $Spec(R)$. We are reduced to showing that the algebra $\CD\otimes_{R} R'$ does not have non-zero zero divisors. 
By Lemma \ref{affb} below, we have that $Y$ is an affine bundle over $T^*C^{(1)}$. By shrinking $Spec(R)$ if necessary, we can assume that the affine bundle is a vector bundle and that $R'$ has the form $R[t_1,...,t_n]$.
We are thus reduced to showing that $\CD\otimes_R R[t_1,...,t_n]$ has no non-trivial zero divisors. This clearly follows from the case $n=0$, which is proved in \cite[Theorem 2.2.3]{BMR}.
\end{proof}

\begin{lem}
\label{affb}
Let $C$ be a connected smooth proper curve over a field $k.$
Let $Y_C$ be the degree $r$ universal spectral curve over $C\times_k A(C,r).$
Then $Y_C$ is an affine space bundle over the cotangent bundle $T^*C.$
\end{lem}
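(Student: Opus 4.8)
\textit{Proof proposal.} The plan is to make the universal spectral curve completely explicit and to recognise the projection $a\colon Y_{C}\to T^{*}C$ as the fibre of a \emph{surjective} morphism of vector bundles over a section; it then follows formally that $Y_{C}$ is a torsor under a vector bundle on $T^{*}C$, hence Zariski-locally trivial, i.e.\ an affine space bundle. To set up, write $\pi\colon T^{*}C=\mathrm{Tot}(\omega_{C})\to C$ for the projection, let $\lambda\in H^{0}(T^{*}C,\pi^{*}\omega_{C})$ be the tautological section (so that $\lambda^{r}$ is a global section of $\pi^{*}\omega_{C}^{\otimes r}$), and let $A:=A(C,r)=\bigoplus_{i=1}^{r}H^{0}(C,\omega_{C}^{\otimes i})$, regarded both as a finite-dimensional $k$-vector space and as the affine $k$-scheme it represents. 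Unwinding the definition, $Y_{C}$ is the zero scheme inside $T^{*}C\times_{k}A$ of the characteristic section
\[
\chi\;=\;\lambda^{r}+\sum_{i=1}^{r}\widetilde a_{i}\,\lambda^{r-i}\ \in\ H^{0}\!\bigl(T^{*}C\times_{k}A,\ \mathrm{pr}_{1}^{*}\pi^{*}\omega_{C}^{\otimes r}\bigr),
\]
where $\widetilde a_{i}$ denotes the universal degree-$i$ coefficient, i.e.\ the pullback to $T^{*}C\times_{k}A$ of the tautological section of $\mathrm{pr}_{C}^{*}\omega_{C}^{\otimes i}$ on $C\times_{k}A$.

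The first real step is the reinterpretation. Since $\chi$ depends $\mathcal O_{X}$-linearly on the coefficients, on $X:=T^{*}C$ one has a morphism of vector bundles
\[
\mu\colon\ \mathcal O_{X}\otimes_{k}A\ \longrightarrow\ \mathcal L:=\pi^{*}\omega_{C}^{\otimes r},\qquad \mu\bigl((a_{i})_{i}\bigr)=\sum_{i=1}^{r}(\pi^{*}a_{i})\cdot\lambda^{r-i};
\]
identifying $T^{*}C\times_{k}A$ with the total space over $X$ of the trivial bundle $\mathcal O_{X}\otimes_{k}A$, the equation $\chi=0$ becomes exactly $\mu=-\lambda^{r}$, so that $Y_{C}$ is the fibre of $\mu$ over the section $-\lambda^{r}$. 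The crux of the proof, and the only step where any geometry of $C$ is used, is that $\mu$ is surjective: the component of $\mu$ on the top summand $\mathcal O_{X}\otimes_{k}H^{0}(C,\omega_{C}^{\otimes r})$ — the $i=r$ term, for which $\lambda^{r-i}=\lambda^{0}=1$ — is precisely the $\pi$-pullback of the evaluation morphism $\mathcal O_{C}\otimes_{k}H^{0}(C,\omega_{C}^{\otimes r})\to\omega_{C}^{\otimes r}$, which is surjective because $\omega_{C}^{\otimes r}$ is globally generated (for $g=1$ one has $\omega_{C}\cong\mathcal O_{C}$, and for $g\ge 2$ the canonical bundle, hence every tensor power of it, is globally generated; in particular this holds under the standing hypothesis $g>1$). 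As pullback preserves surjections, $\mu$ is onto.

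Granting surjectivity of $\mu$, the conclusion is formal. The kernel $\mathcal K:=\ker\mu$ is then locally free of rank $\dim_{k}A-1$, and $Y_{C}$, being the fibre of $\mu$ over a section, carries a translation action of the total space of $\mathcal K$ that is simply transitive on fibres over $X$. This action exhibits $Y_{C}$ as a torsor under $\mathcal K$: over any affine open $U\subseteq X$ the section $-\lambda^{r}|_{U}$ lifts through $\mu$, the obstruction lying in $H^{1}(U,\mathcal K)=0$, so $Y_{C}|_{U}\neq\varnothing$. Finally, a torsor under a vector bundle is Zariski-locally trivial: after further shrinking $U$ so that $\mathcal K|_{U}$ is free, any lift identifies $Y_{C}|_{U}$ with $\mathbb A^{\dim_{k}A-1}_{k}\times_{k}U$ over $U$. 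Hence $a\colon Y_{C}\to T^{*}C$ is an affine space bundle.

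I expect the one genuine obstacle to be the surjectivity of $\mu$; everything else — reading $Y_{C}$ off as the fibre of $\mu$, the torsor argument, local triviality — is bookkeeping with total spaces of vector bundles. Surjectivity of $\mu$ is exactly the statement that $\omega_{C}^{\otimes r}$ is globally generated, and it is also where some positivity of $C$ is unavoidable: for $g=0$ the Hitchin base is zero and $Y_{C}$ is the $r$-fold zero section in $T^{*}C$, which is certainly not an affine space bundle over $T^{*}C$, so a hypothesis like $g\ge 1$ cannot be dropped.
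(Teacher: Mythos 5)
Your proposal is correct, and it takes a genuinely different route from the paper's. The paper compactifies: it passes to $S=\mathbb{P}(\omega_C\oplus\mathcal{O}_C)$, notes that $\mathcal{O}_S(1)$ (hence $\mathcal{O}_S(r)$) is base point free, so the universal family of the linear system $|\mathcal{O}_S(r)|$ is a projective space bundle over $S$; restricting to $T^*C\subset S$ and identifying $Y_C$ with the locus $c\neq 0$ among sections $c\,\mathfrak{t}^r+(\pi^*a_1)\mathfrak{t}^{r-1}+\cdots+\pi^*a_r$, it realizes $Y_C$ as the complement of a relative hyperplane in a projective bundle, whence the affine bundle structure. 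You instead stay on $T^*C$ throughout: you read $Y_C$ off as the preimage of the section $-\lambda^r$ under the $\mathcal{O}_X$-linear coefficient map $\mu:\mathcal{O}_X\otimes_k A\to\pi^*\omega_C^{\otimes r}$, prove surjectivity of $\mu$ from global generation of $\omega_C^{\otimes r}$ (the $i=r$ summand alone suffices), and conclude by the standard torsor-under-$\ker\mu$ argument, with local triviality supplied by vanishing of coherent $H^1$ on affines. The two arguments hinge on the same positivity input — your global generation of $\omega_C^{\otimes r}$ is exactly what makes the paper's $\mathcal{O}_S(1)$ base point free — but yours avoids the projectivization and the universal family of a linear system, is more self-contained, and produces local triviality in precisely the explicit form ($R'=R[t_1,\dots,t_n]$) that is used later in the proof of Lemma \ref{notriv}; the paper's version, in exchange, exhibits a natural relative compactification of $Y_C$. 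Your closing caveat is also well taken: the lemma as stated carries no genus hypothesis, yet for $g=0$ one has $A(C,r)=0$ and the statement fails (the paper's own proof implicitly uses base point freeness of $\omega_C$ as well), which is harmless here only because the lemma is invoked under the standing assumption $g>1$.
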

\begin{proof}
Let $S:=\BP(\omega_C\oplus\CO_C)\cong \BP(\CO_C \oplus TC )$ be the projectivization of the cotangent bundle $T^*C.$
Let $\pi: S\to C$ be the projection morphism.
Let $\CO_S(1)$ be the relative  hyperplane bundle with $\pi_*\CO_S(1)=\omega_C\oplus \CO_C.$
Since $\omega_C\oplus\CO_C$ is base point free, we have that $\CO_S(1),$ thus $\CO_S(n),$ is also base point free.
Let $Y''/\BP(\omega_C\oplus\CO_C)$ be the universal family for the linear system $|\CO_S(n)|.$
By the base point freeness of $\CO_S(n),$ we have that $Y''/\BP(\omega_C\oplus\CO_C)$ is a projective space bundle.
Therefore, the restriction $Y'/T^*C$ of $Y''$ is a projective space bundle over $T^*C.$

We now describe the universal spectral curve $Y_C/T^*C$ inside $Y'/T^*C.$
Let $\Ft\in H^0(S,\CO_S(1))=H^0(C,\omega_C\oplus \CO_C)$ be the section
given by $1\in H^0(C,\CO_C).$
We have that $\Ft$ vanishes at the zero section $C\subset T^*C.$
The sections of $\CO_S(n)$ are of the form 
$
    c\Ft^r+(\pi^*a_1)\Ft^{r-1}+...+\pi^* a_r,
$
where $a_i\in H^0(C,\omega_C^{\otimes i}), i=1,...,r,$ and $c\in H^0(C,\CO_C)\cong k.$
We have that $Y_C\subset Y'$ is given by the condition that $c\ne 0.$
Therefore, the affine space bundle structure on $Y_C/T^*C$ follows from the projective space bundle structure on $Y'/T^*C.$
\end{proof}

\section{The stack of very good splittings}

\subsection{The stack $\CP^{st,o}(Y/A)$}\;

The canonical reference for the definition of the identity and torsion components of the relative Picard functor $\CP^{sh,o}(Y/A)$ and $\CP^{sh,\tau}(Y/A)$ is \cite[XIII, \S4]{sga6}. 
Namely, given an $A$-scheme $U$, the group $\CP^{sh,o}(Y/A)$ (resp. $\CP^{sh,\tau}(Y/A)$ consists of the isomorphism classes of line bundles $L$ on $Y_{U}$ such that for each geometric point $u$ of $U$, $L|_u$ (resp. some nonzero integer power of $L|_u$) is algebraically equivalent to the trivial line bundle on $Y_u$.

According to \cite[\S9, Cor. 14]{BLR}, given $Y/k$ a proper curve over an algebraically closed field $k$ with $r$ irreducible components $Y_1,...,Y_r$, the map:
\begin{equation}
    \label{inj}
    \CP^{abs}(Y)/\CP^{abs,o}(Y)\to \BZ^r,\;\;\; L\mapsto (\mathrm{deg}_{Y_1}(L),...,\mathrm{deg}_{Y_r}(L)),
\end{equation}
is injective and has finite index. The degree of a line bundle is defined as the degree of any of the associated Cartier divisors, which is in turn defined in \cite[p.237]{BLR}.

By the injection (\ref{inj}) above, given a proper curve $Y$ over an algebraically closed field $k$, we have the canonical natural equivalence of functors
$\CP^{sh,o}_{Y/k}\cong \CP^{sh,\tau}_{Y/k}.$
In particular, in our case, where $Y/A$ is a proper morphism between schemes so that all the geometric fibers are curves, we also have a natural canonical equivalence of functors:
\begin{equation}
    \label{oandtau}\CP^{sh,o}_{Y/A}\xrightarrow{\sim} \CP^{sh, \tau}_{Y/A}.
\end{equation}

\begin{defn}
\label{defpiost}
We define $\CP^{st,o}_{Y/A}$ to be the fiber product of algebraic stacks 
\[\CP^{st,o}_{Y/A}:= \CP^{st}_{Y/A}\times_{\CP^{sh}_{Y/A}} \CP^{sh,o}_{Y/A}.\]
\end{defn}

We record some useful properties of the algebraic spaces and stacks above:

\begin{prop}\label{picost}
Let $Y/A=Y/A(C^{(1)})$ be the universal spectral curve. Then
\begin{enumerate}
\item $\CP^{sh,o}_{Y/A(C^{(1)})}$ is representable by a smooth quasi-projective group scheme over $A(C^{(1)})$.
\item Set $d^o:=r(r-1)(1-g)$. There is a canonical open immersion $\CP^{sh,o}_{Y/A(C^{(1)})}\hookrightarrow \CM_{Dol}^s(C^{(1)},r,d^o)$ into the moduli space of stable Higgs bundles of rank $r$ and degree $d^o$ over $C^{(1)}$.
\item For each $A$-scheme $U$, $\CP^{sh,o}_{Y/A}(U)$ consists of isomorphism classes of line bundles $L$ on $Y_U$ such that for each geometric point $u\in U$, the isomorphism class of $L|_{Y_u}$ lies in the identity component of $\CP^{abs}(Y_u)$.
\item $\CP^{sh, o}_{Y/A}$ is an open algebraic subspace of $\CP^{sh}_{Y/A}$.
\item The stack $\CP^{st,o}_{Y/A}$ is a $\BG_m$-gerbe over $\CP^{sh}_{Y/A}$.
    \item $\CP^{st,o}_{Y/A}$ is an open substack of $\CP^{st}_{Y/A}$.
    \item $\CP^{st,o}_{Y/A}$ is smooth over $A$.
    \item For each $A$-scheme $U$, $\CP^{st,o}_{Y/A}(U)$ is the groupoid of line bundles $L$ on $Y_U$ such that for each geometric point $u\in U$, $L|_{Y_u}$ lies in the identity component of $\CP^{abs}(Y_u)$.
\end{enumerate}
\end{prop}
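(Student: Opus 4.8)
The plan is to deduce items (3)--(8) formally from Lemma~\ref{trivprop}, Definition~\ref{defpiost}, and the openness of the relative torsion component of the Picard space, and to obtain the two substantive items (1) and (2) from the relative Beauville--Narasimhan--Ramanan spectral construction. As a preliminary I would record the standing facts that let us invoke Lemma~\ref{trivprop}: the universal spectral curve $\rho\colon Y\to C^{(1)}\times_k A(C^{(1)})$ is finite and flat of degree $r$, so $f\colon Y\to A(C^{(1)})$ is proper, flat, of finite presentation and of relative dimension $1$; and $\CO_T\xrightarrow{\sim}f_{T,*}\CO_{Y_T}$ for every $A$-scheme $T$, because every geometric fibre $Y_b$ is a connected projective curve with $\rho_{b,*}\CO_{Y_b}\cong\bigoplus_{i=0}^{r-1}\omega_{C^{(1)}}^{-i}$, hence $H^0(Y_b,\CO_{Y_b})=k$, and one concludes by cohomology and base change.

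With this in hand, items (4)--(8) are formal. For (4): by the equivalence (\ref{oandtau}) one has $\CP^{sh,o}_{Y/A}=\CP^{sh,\tau}_{Y/A}$ as subfunctors of $\CP^{sh}_{Y/A}$, and the relative torsion component is open by the theory of the Picard scheme (\cite[Exp.~XIII]{sga6}, \cite{Kl05}). Item (6) holds because, by Definition~\ref{defpiost}, $\CP^{st,o}_{Y/A}$ is the base change along $\CP^{st}_{Y/A}\to\CP^{sh}_{Y/A}$ of the open immersion $\CP^{sh,o}_{Y/A}\hookrightarrow\CP^{sh}_{Y/A}$ of (4), and open immersions are stable under base change. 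Item (5): by Lemma~\ref{trivprop}(3) the morphism $\CP^{st}_{Y/A}\to\CP^{sh}_{Y/A}$ is a $\BG_m$-gerbe, and its base change $\CP^{st,o}_{Y/A}\to\CP^{sh,o}_{Y/A}$ along the same open immersion is again a $\BG_m$-gerbe, which composed with $\CP^{sh,o}_{Y/A}\hookrightarrow\CP^{sh}_{Y/A}$ is the asserted structure morphism. Item (7) follows from (6) and the smoothness of $\CP^{st}_{Y/A}$ over $A$ (Lemma~\ref{trivprop}(6)). Items (3) and (8) come from unwinding definitions: (3) combines the description of $\CP^{sh,o}_{Y/A}$ recalled just before Definition~\ref{defpiost} with the standard identification of the identity component of $\CP^{abs}(Y_u)$ with the classes algebraically equivalent to $\CO_{Y_u}$; and since the $2$-fibre product defining $\CP^{st,o}_{Y/A}$ is computed groupoid-wise, $\CP^{st,o}_{Y/A}(U)$ is the full subgroupoid of $\CP^{st}_{Y/A}(U)$ --- that is, of line bundles on $Y_U$ --- consisting of those whose fibrewise class lies in the identity component, which is (8).

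The core is item (2), from which (1) follows. I would build a morphism $\nu\colon\CP^{sh,o}_{Y/A}\to\CM_{Dol}^s(C^{(1)},r,d^o)$ over $A(C^{(1)})$ by $L\mapsto(\rho_*L,\theta)$, $\theta$ induced by the tautological section $\lambda$: working fppf-locally on the base, $\rho_*L$ is a rank-$r$ vector bundle on $C^{(1)}$ (torsion-free over a smooth curve) with Hitchin image the given point of $A(C^{(1)})$, and since $\rho_*L$ changes only by a twist pulled back from the base when $L$ does, this descends to a morphism to the coarse moduli scheme. For $L$ in the identity component one has $\deg_{Y_i}L=0$ on every component $Y_i$ of $Y_b$ (by the injection (\ref{inj})), so $\deg\rho_*L=\deg L+\deg\rho_{*}\CO_{Y_b}=0-r(r-1)(g-1)=d^o$. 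The key point is \emph{stability}: given a nonzero saturated Higgs subsheaf $(F,\theta|_F)\subsetneq(\rho_*L,\theta)$ of rank $r'$, $0<r'<r$, its characteristic polynomial $p_F$ is a monic factor of $b:=\mathrm{char}(\rho_*L)$ of $\lambda$-degree $r'$, cutting out a degree-$r'$ spectral curve $Y_F\subseteq Y_b$, and by Cayley--Hamilton $F\subseteq\ker\!\big(p_F(\theta)\colon\rho_*L\to\rho_*L\otimes\omega_{C^{(1)}}^{\otimes r'}\big)=\rho_*\big(L\otimes\mathrm{Ann}_{\CO_{Y_b}}(p_F(\lambda))\big)$, a subsheaf of the same rank $r'$; since $\mathrm{Ann}_{\CO_{Y_b}}(p_F(\lambda))$ is the ideal $(b/p_F)(\lambda)\cdot\CO_{Y_b}\cong\CO_{Y_F}\otimes\rho^*\omega_{C^{(1)}}^{\otimes(r'-r)}$ and $\deg(L|_{Y_F})=0$, the projection formula gives that this kernel has slope $-(g-1)(2r-r'-1)$, which is strictly smaller than $\mu(\rho_*L)=-(g-1)(r-1)$ because $0<r'<r$ and $g>1$; hence $\mu(F)<\mu(\rho_*L)$ and $\rho_*L$ is stable. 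Finally, $\nu$ is a monomorphism of finite type --- the relative spectral correspondence reconstructs $L$ functorially from $(\rho_*L,\theta)$ --- hence unramified and universally injective, and it is flat by miracle flatness, since $\CP^{sh,o}_{Y/A}$ and $\CM_{Dol}^s(C^{(1)},r,d^o)$ are smooth over $k$ of dimension $2\dim A(C^{(1)})$ (their fibres over $A(C^{(1)})$ being respectively the identity component of the Picard scheme of the spectral curve and a Hitchin fibre, each of dimension the arithmetic genus $\dim A(C^{(1)})$) while $\nu$ has $0$-dimensional fibres; a flat, unramified, universally injective morphism is an open immersion, giving (2). Then $\CP^{sh,o}_{Y/A}$, being an open subscheme of the quasi-projective scheme $\CM_{Dol}^s(C^{(1)},r,d^o)$, is a quasi-projective scheme; together with the smoothness of $\CP^{sh}_{Y/A}$ over $A$ (Lemma~\ref{trivprop}(6)), inherited by the open subspace, and the fact that $\CP^{sh,o}_{Y/A}$ is a subgroup object of the group algebraic space $\CP^{sh}_{Y/A}$ (``algebraically equivalent to $\CO$'' being stable under tensor product and dual), this gives (1).

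\textbf{Where the work is.} Items (3)--(8) are bookkeeping. The substance is item (2): the stability computation, which is the only genuinely geometric input and which must be carried out over \emph{all} of $A(C^{(1)})$ --- including the most degenerate (reducible, non-reduced) spectral curves, where one must verify that the identifications of $\mathrm{Ann}_{\CO_{Y_b}}(p_F(\lambda))$ and of $\rho_*\CO_{Y_F}$ remain as stated --- together with the careful assembly of the relative spectral correspondence over the whole Hitchin base (needed to know $\nu$ is a monomorphism there) and the attendant bookkeeping with the $\BG_m$-gerbes in passing between line bundles on $Y$ and the coarse moduli scheme $\CM_{Dol}^s(C^{(1)},r,d^o)$.
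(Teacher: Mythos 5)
Your treatment of items (3)--(8) matches the paper's: they are deduced formally from Lemma~\ref{trivprop}, the equivalence (\ref{oandtau}) together with the openness of the torsion component (\cite[XIII]{sga6}, Kleiman), and base change of the open immersion along the $\BG_m$-gerbe $\CP^{st}_{Y/A}\to\CP^{sh}_{Y/A}$. Where you genuinely diverge is on the substantive items (1)--(2): the paper does not prove them at all, it cites Chaudouard--Laumon \cite[p.716, \S5]{CL}, whereas you reconstruct the argument directly. Your reconstruction is essentially sound: the degree count $\deg\rho_*L=d^o$ is right; the stability step is correct, including over non-reduced and reducible spectral curves, since the identification $\mathrm{Ann}_{\CO_{Y_b}}(p_F(\lambda))=(b/p_F)(\lambda)\cdot\CO_{Y_b}\cong\CO_{Y_F}\otimes\rho^*\omega^{\otimes(r'-r)}$ only uses that $R[\lambda]$ is a domain and that the monic division $b=q\,p_F$ has regular coefficients, and the slope $-(g-1)(2r-r'-1)<-(g-1)(r-1)$ indeed uses precisely the componentwise (multidegree-zero) vanishing $\deg_{\Gamma_i}(L|_{\Gamma_i})=0$ coming from (\ref{inj}) --- the same mechanism the paper exploits for very good splittings. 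The monomorphism claim is also fine once one uses that $\CM^s_{Dol}\to M^s_{Dol}$ is a $\BG_m$-gerbe, so equal maps to the coarse space give fppf-locally isomorphic Higgs families, and the affine spectral correspondence recovers $L$ up to a twist from the base, which is exactly the ambiguity already killed in $\CP^{sh}$ by Lemma~\ref{trivprop}(1). The one place where your argument leans on an unproved input is the open-immersion step via miracle flatness: you assert that $M^s_{Dol}(C^{(1)},r,d^o)$ is smooth over $k$ of dimension $2\dim A(C^{(1)})$. Since here $r\mid d^o$, this is outside the coprime case, and in characteristic $p$ (possibly with $p\mid r$) smoothness of the stable locus is a genuine theorem (deformation-theoretic, via the trace/determinant obstruction comparison, or Langer-type results analogous to the de Rham statement in Theorem~\ref{langer}); the parenthetical fibre-dimension remark you offer does not prove it. Either supply such a reference, or replace miracle flatness by showing $\nu$ is formally \'etale directly, using that the relative BNR correspondence identifies the deformation functor of $L$ on the (varying) spectral curve with that of $(\rho_*L,\theta)$; with that repair, your route gives a self-contained proof of (1)--(2), at the cost of redoing what the paper outsources to \cite{CL}, and with the benefit of making explicit exactly where the identity-component (multidegree) hypothesis enters the stability estimate.
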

\begin{proof}
Items (1) and (2) are proved in \cite[p.716, \S5]{CL}. 
Items (3) and (8) follows from \cite[Cor. 4.18.3, Prop. 5.10]{Kl05}, which states that for a complete scheme $Y$ over a field $k$, the sheaf $\CP_{Y/k}^{sh}$ is representable by a group scheme, and the identity component of this group scheme represents the sheaf $\CP_{Y/k}^{sh, o}$. 
Item (4) follows from the isomorphism (\ref{oandtau}) and \cite[XIII, Theorem 4.7]{sga6}, which states that for any proper morphism of finite presentation $Y/A$ over a quasi-compact base $S$ (and here $S=Spec(k)$), the canonical inclusion $\CP^{sh,\tau}_{Y/A}\hookrightarrow \CP^{sh}_{Y/A}$ is representable by a quasi-compact open immersion.
Item (5) follows from Lemma \ref{trivprop}.(3).
Item (6) follows from (4) above and \cite[\href{https://stacks.math.columbia.edu/tag/0501}{Tag 0501}]{Sta22}.
Item (7) follows from (6) and Lemma \ref{trivprop}.(4).
\end{proof}

\subsection{Ranks and Degrees}
\label{rkdeg}
Before introducing the stack $\CS^o$, let us recall the definition for ranks and degrees of torsion free sheaves on spectral curves.
This material is standard, e.g, see \cite[\S2.4]{dCSL}.

Let us fix a spectral curve $\Gamma/C$. We can write $\Gamma=\sum_k m_k \Gamma_k$, where each $\Gamma_k$ is an integral curve and $\Gamma_k\ne \Gamma_{k'}$ if $k\ne k'$.
Let $\eta_k$ be the generic point of each $\Gamma_k$.
We say that a coherent $\CO_{\Gamma}$-module $E$ is torsion free if, for all the natural inclusions of the generic points $j_k: \eta_k\to \Gamma$, the unit morphism $E\to \prod_k j_{k,*}j_k^*E$ is injective. 

Given a torsion free sheaf $E$ on $\Gamma$, we say that $E$ has rank $rk_{\Gamma}(E)=r$ if for each $k$, we have that $l_{\CO_{\eta_k}}(E_{\eta_k})=r m_k$.
When such a rank is well defined, we define the degree to be:
\begin{equation}\label{defdeg}
\mathrm{deg}_{\Gamma}(E):=\chi(E)-rk_{\Gamma}(E)\chi(\CO_{\Gamma}).
\end{equation}

Recall that given a Higgs bundle $E$ with spectral curve $\Gamma$, there is a unique coherent $\CO_{\Gamma}$-module $\CB(E)$ with an identity $\pi_{C,*}\CB(E)=E$ of $\pi_{C,*}\CO_{T^*C}$-modules, where $\pi_C:T^*C\to C$ is the projection. The coherent $\CO_{\Gamma}$-module:
\begin{equation}
    \label{bnr}
    \CB(E)
\end{equation}
is called the BNR sheaf of $E$. By \cite[Proposition 2.1]{Schaub}, BNR sheaves are all torsion free of rank 1 according to our definition. We define the degrees of BNR sheaves using (\ref{defdeg}) above.

\begin{rmk}[On the definition of ranks and degrees as in \cite{Groch}]
In \cite[\S3.6]{Groch} the author defined rank and degree only for sheaves on $\Gamma$ that admit a finite length resolution by vector bundles. However, this is not sufficient in \cite{Groch} as well as in this paper.
In particular, in \cite[Lemma 3.25]{Groch}, the author states that BNR sheaves have finite length resolution by vector bundles. This is not true in general. For a counter-example, consider a spectral curve with  a node at a point $x$. The maximal ideal sheaf $\mathfrak{m}_x$ is a torsion free rank 1 sheaf, so a BNR sheaf of some Higgs bundle. However, its projective dimension is infinite, thus it cannot have a finite length resolution by vector bundles. 
In general,  we have the following lemma:

\begin{lem}
A BNR sheaf either is a vector bundle, or it has infinite projective dimension.
\end{lem}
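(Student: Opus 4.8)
The statement is local on the spectral curve $\Gamma$, so we may work with a torsion free rank one module $E$ over the local ring $R = \CO_{\Gamma, x}$ at a closed point $x$ of $\Gamma$. Since $\Gamma$ is a spectral curve inside the surface $T^*C$ (or its projective completion $S$), it is a Gorenstein curve: it is a divisor in a smooth surface, hence a local complete intersection, hence Cohen--Macaulay of dimension one with Gorenstein local rings. The plan is to exploit this Gorenstein hypothesis together with the Auslander--Buchsbaum formula. First I would recall that for a finitely generated module $M$ over a Noetherian local ring $R$ of finite projective dimension one has $\operatorname{pd}_R M + \operatorname{depth} M = \operatorname{depth} R$ (Auslander--Buchsbaum). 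A BNR sheaf $E$ is torsion free over the one-dimensional Cohen--Macaulay ring $R$, hence has positive depth, i.e. $\operatorname{depth}_R E \ge 1$; since $\dim R = 1$ and $\operatorname{depth} R = 1$ (Cohen--Macaulay), this forces $\operatorname{depth}_R E = 1 = \operatorname{depth} R$. Therefore, if $\operatorname{pd}_R E < \infty$, Auslander--Buchsbaum gives $\operatorname{pd}_R E = 0$, i.e. $E$ is free over $R$. Running this over all closed points $x$ shows that a BNR sheaf of finite projective dimension is locally free, hence a vector bundle; contrapositively, a BNR sheaf that is not a vector bundle has infinite projective dimension.

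To assemble this into a clean argument I would proceed in three steps. Step one: reduce to the local statement, noting that projective dimension of a coherent sheaf on a scheme of finite type is the supremum of the projective dimensions of its stalks, and that being locally free is a stalk-local condition. Step two: verify that each local ring $\CO_{\Gamma,x}$ is Gorenstein (in particular Cohen--Macaulay) of dimension one — this follows because $\Gamma$ is an effective Cartier divisor in the smooth (hence regular, hence Cohen--Macaulay and Gorenstein) surface $S = \BP(\omega_C \oplus \CO_C)$, and a hypersurface in a regular local ring is a complete intersection, so Gorenstein, of dimension one less. Step three: apply Auslander--Buchsbaum as above, using that torsion free over a one-dimensional Cohen--Macaulay local ring implies depth exactly one (a nonzerodivisor exists in the maximal ideal acting on $E$ precisely because $E$ embeds in its total quotient module, which is where torsion-freeness enters).

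The only subtlety — and the step I would be most careful about — is Step three's depth computation: one must know that a nonzero torsion free module over a one-dimensional Cohen--Macaulay local ring has depth at least one, equivalently that some element of the maximal ideal is a nonzerodivisor on $E$. This is where the torsion-freeness hypothesis on BNR sheaves (established via \cite[Proposition 2.1]{Schaub}, as recalled above) is used essentially: torsion free means $E$ injects into a product of localizations at the generic points, so any element of $R$ that is a nonzerodivisor on $R$ is automatically a nonzerodivisor on $E$, and such an element exists in $\Fm_x$ since $R$ has positive depth. Once this is in hand, the rest is a one-line invocation of Auslander--Buchsbaum, and no case analysis on the singularity type of $\Gamma$ (node, cusp, tacnode, \ldots) is needed. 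This is precisely what makes the argument robust in the generality required here, in contrast to the vector-bundle-resolution approach of \cite{Groch}.
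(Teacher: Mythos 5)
Your proof is correct and follows essentially the same route as the paper: establish that the stalks of a BNR sheaf have depth equal to the depth of the local ring (maximal Cohen--Macaulayness), then apply Auslander--Buchsbaum to conclude that finite projective dimension forces projective dimension zero. The only difference is that you derive the depth-one statement directly from torsion-freeness and the fact that $\Gamma$ is a Cartier divisor in a smooth surface, whereas the paper simply cites the maximal Cohen--Macaulay property from \cite{Groch}; this is a harmless elaboration, not a different argument.
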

\begin{proof}
As stated correctly in the proof of \textit{loc.cit,} a BNR sheaf $\CF$ on a spectral curve $\Gamma$ is a maximal Cohen-Macaulay sheaf, i.e., for each point $x\in X$, we have $\mathrm{depth}_{\CO_{X,x}}(\CF_x)=\dim(\CO_{X,x})=\mathrm{depth}(\CO_{X,x})$. The Auslander-Buchsbaum Theorem \cite[\href{https://stacks.math.columbia.edu/tag/090U}{Tag 090U}]{Sta22} states that if $\CF_x$ is an $\CO_{X,x}$-module of finite projective dimension, then $\mathrm{depth}(\CO_{X,x})=\mathrm{pd}_{\CO_{X,x}}(\CF_x)+\mathrm{depth}_{\CO_{X,x}}(\CF_x)$, thus $\mathrm{pd}_{\CO_{X,x}}(\CF_x)=0$, i.e., $\CF$ is already a vector bundle. 
\end{proof}

We note that this incorrect definition of rank and degree has no consequence on the result of \cite{Groch}. In fact:
\begin{enumerate}
    \item 
    Since the author also defines degrees of sheaves using the Riemann-Roch formula, as long as the torsion free sheaf has a well-defined integer rank, the degree of the sheaf agrees with the one in (\ref{defdeg});
    \item The proof of \cite[Lemma 3.37]{Groch} only involves Riemann-Roch calculation of degrees, so the proof and the conclusion are correct;
    \item \cite[Corollary 3.38]{Groch} is an immediate corollary of \textit{loc.cit,} so it is also correct;
    \item The contents from Lemma 3.39 to Corollary 3.45 in \cite{Groch} are only concerned with the behavior of degrees of some line bundles or vector bundles, and they are thus correct.
\end{enumerate}
\end{rmk}

\subsection{Introducing the stack $\CS^{o}$ of very good splittings}\;

Recall that the stack of relative splittings $\CS$ is defined in Definition \ref{defS}.
\begin{defn}[Very Good Splittings]
\label{defvgs}
We define $\CS^o$ to be the 2-functor such that for each $A$-scheme $U$, the groupoid $\CS^o(U)$ is the full subgroupoid of $\CS(U)$ consisting of splittings $E$ satisfying the following property:
For every geometric point $u$ of $U$, write $Y_u=\sum_i m_i\Gamma_i$, where each $\Gamma_i$ is integral and $\Gamma_i\ne \Gamma_{i'}$ for $i\ne i'$. Each $\Gamma_i$ is a spectral curve of degree $r_i$ over $C^{(1)}$. We have that $\sum_i m_ir_i=r$. For each $i$, we require that
\begin{equation}
    \label{degcond}
    \mathrm{deg}_{\Gamma_i}(E_u|_{\Gamma_i})=(1-p)\chi(\CO_{C^{(1)}}) r_i.
\end{equation}

We call such a splitting $E$ in $\CS^o(U)$ a very good splitting on $Y_U$.
\end{defn}

Let us explain why it is natural to come up with this condition (\ref{degcond}) on the degrees. Let $a$ be a geometric point of $A(C^{(1)})$. Let $Y^p_a:= Y_a\times_{C^{(1)}} C$, which lies inside $T^{1,*}_C:=Tot(\omega_{C}^{\otimes p})=T^*_{C^{(1)}}\times_{C^{(1)}} C$. 
Let $E$ be a splitting of $\CD|_{Y_a}$.
Let $W: T^{1,*}_C\to T^*C^{(1)}$ be the natural projection.
By \cite[Lemma 2.1.1]{BMR}, there is a natural inclusion of $\CO_{T^*C^{(1)}}$-algebras $W_*\CO_{T^{1,*}_C}\hookrightarrow\CD$.
Let $i: Y_a\hookrightarrow T^*C^{(1)}$ be the closed immersion. 
Since $W$ is affine, by cohomology and base change \cite[\href{https://stacks.math.columbia.edu/tag/02KG}{Tag 02KG}]{Sta22}, we have an isomorphism of $\CO_{T^*C^{(1)}}$-algebras $W_*i^*\CO_{T^{1,*}_C}\cong i^*W_*\CO_{T^{1,*}_C}$.
We thus have the following morphisms of $\CO_{T^*C^{(1)}}$-algebras:
\begin{equation}
    \label{wbc}
    W_* \CO_{Y_a^p}=W_*i^*\CO_{T^{1,*}_C}\cong i^*W_*\CO_{T^{1,*}_C}\hookrightarrow i^*\CD=\CD|_{Y_a}.
\end{equation}
We can thus view the $\CD|_{Y_a}$-module $E$ as a $W_*\CO_{Y_a^p}$-module.
Since $W$ is affine, by \cite[Proposition 12.5]{GW}, there is an $\CO_{Y_a^p}$-module $L_E$ on $T^{1,*}C$ with $W_*L_E=E$. 
By \cite[Proposition 2.3]{OV}, we have that $L_E$ is indeed a line bundle on $Y_a^p$.
\begin{defn}[$L_E$]
\label{defle}
Given a splitting $E$ on a spectral curve $Y_a$, we define $L_E$ to be the unique line bundle on $Y_a^p$ with an equality of $W_*\CO_{Y_a^p}$-modules $W_*L_E=E$.
\end{defn}

\begin{lem}\label{vglb}
A splitting $E$ is very good if and only if $L_E\in \CP^{st,o}_{Y^p/A}(U).$
\end{lem}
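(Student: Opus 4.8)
The plan is to relate the degree of a very good splitting $E$ restricted to an irreducible component $\Gamma_i$ of a spectral fiber $Y_u$ to the degree of the corresponding line bundle $L_E$ restricted to the matching component of $Y_u^p$, and then to invoke Proposition \ref{picost}.(8) which characterizes membership in $\CP^{st,o}_{Y^p/A}(U)$ fiberwise in terms of the identity component of the absolute Picard group. Since both the very-good condition (\ref{degcond}) and membership in $\CP^{st,o}$ are conditions that can be checked at each geometric point $u$ of $U$ separately, I would immediately reduce to the case where $U$ is a geometric point, i.e.\ to a single spectral curve $Y_a$ over a closed point $a \in A(C^{(1)})$, with its "Frobenius pullback" $Y_a^p = Y_a \times_{C^{(1)}} C$ and the finite flat degree-$p$ map $W\colon Y_a^p \to Y_a$ (the restriction of $W\colon T^{1,*}_C \to T^*C^{(1)}$). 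Write $Y_a = \sum_i m_i \Gamma_i$ and note that the components of $Y_a^p$ are exactly the preimages $\Gamma_i^p := \Gamma_i \times_{C^{(1)}} C$ (here one should check $W^{-1}(\Gamma_i)$ is still irreducible, or at least record its multiplicity structure; since $C \to C^{(1)}$ is a universal homeomorphism this is where a small argument is needed, but $\Gamma_i^p$ is integral because $C\to C^{(1)}$ is purely inseparable so pullback preserves irreducibility and reducedness is handled by the spectral curve being reduced — actually one only needs the multiplicities, see below).

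The key computation is a Riemann--Roch / pushforward identity. Since $W_* L_E = E$ as $\CO_{Y_a}$-modules and $W$ is affine, $\chi(Y_a, E) = \chi(Y_a^p, L_E)$, and more refined versions restricted to components should give, using the projection formula and flatness of $W$, a relation of the form $\mathrm{deg}_{\Gamma_i}(E|_{\Gamma_i}) = \mathrm{deg}_{\Gamma_i^p}(L_E|_{\Gamma_i^p}) + (\text{a correction term depending only on } \Gamma_i, C, C^{(1)}, p)$. Concretely, I would use the definition (\ref{defdeg}): $\mathrm{deg}_\Gamma(F) = \chi(F) - rk_\Gamma(F)\chi(\CO_\Gamma)$. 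One has $rk_{\Gamma_i}(E|_{\Gamma_i}) = p \cdot r_i / (\text{appropriate normalization})$ — in fact $E$ has rank $p$ as an $\CO_{Y_a}$-module in the BNR-normalized sense corresponding to the degree-$r$ spectral situation, while $L_E$ is a line bundle on $Y_a^p$, so $L_E|_{\Gamma_i^p}$ has rank $1$. Combining $\chi(E|_{\Gamma_i}) = \chi(L_E|_{\Gamma_i^p})$ (from $W$ affine plus base change, as in (\ref{wbc})) with the rank bookkeeping and the values $\chi(\CO_{\Gamma_i})$, $\chi(\CO_{\Gamma_i^p})$ computed via $W$, the condition (\ref{degcond}) that $\mathrm{deg}_{\Gamma_i}(E|_{\Gamma_i}) = (1-p)\chi(\CO_{C^{(1)}})r_i$ should become exactly equivalent to $\mathrm{deg}_{\Gamma_i^p}(L_E|_{\Gamma_i^p}) = 0$ for every $i$. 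The right normalization is forced by the $d^o = r(r-1)(1-g)$ bookkeeping of Proposition \ref{picost}.(2) together with the arithmetic-genus computation for spectral curves; I expect the factor $(1-p)\chi(\CO_{C^{(1)}})r_i$ was reverse-engineered precisely so that this comes out clean.

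Finally, by the injectivity and finite-index statement (\ref{inj}) from \cite[\S9, Cor.~14]{BLR}, a line bundle $M$ on the proper curve $Y_a^p$ lies in the identity component $\CP^{abs,o}(Y_a^p)$ if and only if $\mathrm{deg}_{\Gamma_i^p}(M) = 0$ for all $i$ — wait, more carefully, (\ref{inj}) only says the multidegree map has finite index, so vanishing multidegree is necessary but a priori not sufficient; however, since we also have (\ref{oandtau}), i.e.\ $\CP^{sh,o} = \CP^{sh,\tau}$, and the torsion component is exactly the kernel of the multidegree map up to the torsion subgroup, the combination with the numerical criterion (\ref{degcond}) pins down membership. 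Thus I would argue: (\ref{degcond}) holds for all $i$ $\iff$ $\mathrm{deg}_{\Gamma_i^p}(L_E|_{Y_u^p,\Gamma_i^p}) = 0$ for all $i$ $\iff$ $L_E|_{Y_u^p}$ lies in $\CP^{sh,\tau}(Y^p/A)$ at $u$ $\iff$ (by (\ref{oandtau})) $L_E|_{Y_u^p}$ lies in the identity component, which by Proposition \ref{picost}.(8) is exactly the condition $L_E \in \CP^{st,o}_{Y^p/A}(U)$. The main obstacle I anticipate is not conceptual but the careful tracking of multiplicities and Euler characteristics under the purely inseparable base change $W$: getting the component structure of $Y_a^p$ right (multiplicities $m_i$ versus $pm_i$ etc.), verifying the rank normalization so that the Riemann--Roch correction term matches $(1-p)\chi(\CO_{C^{(1)}})r_i$ on the nose, and handling the "finite index versus exactly zero" gap in (\ref{inj}) via the torsion-component identification (\ref{oandtau}). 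These are exactly the places where an off-by-a-factor-of-$p$ error would creep in, so that computation is where I would be most careful.
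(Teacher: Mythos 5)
Your proposal is correct and follows essentially the same route as the paper's proof: reduce to a geometric fiber, equate Euler characteristics of $E|_{\Gamma_i}$ and $L_E|_{\Gamma_i^p}$ along the affine morphism $W$, apply Riemann--Roch with ranks $p$ and $1$ together with the arithmetic-genus formula for spectral curves (for $\omega_{C^{(1)}}$ and $\omega_C^{\otimes p}$ respectively) to see that (\ref{degcond}) is equivalent to $\mathrm{deg}_{\Gamma_i^p}(L_E)=0$ for all $i$, handle multiplicities via $\mathrm{deg}_{m_i\Gamma_i^p}(L_E)=m_i\,\mathrm{deg}_{\Gamma_i^p}(L_E)$, and conclude by the multidegree characterization of the identity component. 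Your one hesitation is unnecessary: the map (\ref{inj}) is injective (the finite-index statement concerns its image), so vanishing multidegree already implies membership in $\CP^{abs,o}(Y_u^p)$ without the detour through (\ref{oandtau}) and the torsion component.
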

\begin{proof}
We can write $Y_u^p=\sum_i m_i\Gamma_i^p$, where each $\Gamma_i^p:=\Gamma_i\times_{C^{(1)}}C$ is integral, and any two of them are distinct. Since $W: \Gamma_i^p\to \Gamma$ is an affine morphism, we have the equality $\chi(L_E)=\chi(E)$. Furthermore, each $\Gamma_i^p$ is a degree $r_i$ spectral curve over $C$ for the bundle $\omega_C^{\otimes p}$. By Riemann-Roch, we thus have that 
\[\mathrm{deg}_{\Gamma_i^p}(L_E)+\chi(\CO_{\Gamma_i^p})= \mathrm{deg}_{\Gamma_i}(E)+p\chi(\CO_{\Gamma_i}).\]
Since a degree $r_i$ spectral curve $Y_D$ over $C$ for a line bundle $D$ satisfies the identity $\chi(\CO_{Y_D})= r_i\chi(\CO_C)-\frac{r_i(r_i-1)}{2}deg(D),$ we have that 
\begin{equation*}
     \mathrm{deg}_{\Gamma_i^p}(L_E)+r_i\chi(\CO_C)-\frac{r_i(r_i-1)}{2}\mathrm{deg}(\omega_C^{\otimes p})
    =  \mathrm{deg}_{\Gamma_i}(E)+p\Big(r_i\chi(\CO_{C^{(1)}})-\frac{r_i(r_i-1)}{2}\mathrm{deg}(\omega_{C^{(1)}})\Big).
\end{equation*}
In particular, we have that $\mathrm{deg}_{\Gamma_i}(E)=(1-p)r_i \chi(\CO_{C^{(1)}})$ if and only if $\mathrm{deg}_{\Gamma_i^p}(L_E)=0$. By \cite[\S9, Proposition 5]{BLR}, we have that $\mathrm{deg}_{m_i\Gamma_i^p}(L_E)=m_i \mathrm{deg}_{\Gamma_i^p}(L_E)$. Therefore, we have that $\mathrm{deg}_{\Gamma_i}(E)=(1-p)r_i \chi(\CO_{C^{(1)}})$ if and only if $L_E\in \CP^{st,o}_{Y^p/A}(U).$
\end{proof}

\begin{cor}
There is a morphism of algebraic stacks $\CS\to \CP^{st}_{Y^p/A}$, inducing an isomorphism of algebraic stacks:
\begin{equation}
\label{soopen}
    \CS^o\cong \CS\times_{\CP^{st}_{Y^p/A}} \CP^{st,o}_{Y^p/A}.
\end{equation}
\end{cor}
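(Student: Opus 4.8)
The plan is to construct the morphism $\CS \to \CP^{st}_{Y^p/A}$ functorially via the assignment $E \mapsto L_E$ of Definition \ref{defle}, then identify $\CS^o$ with the claimed fiber product using Lemma \ref{vglb}. First I would verify that the construction $E \mapsto L_E$ is functorial in families, i.e.\ that it defines a morphism of $2$-functors $\CS \to \CP^{st}_{Y^p/A}$. Given an $A$-scheme $U$ and a splitting $(E,\phi) \in \CS(U)$, the inclusion of $\CO_{T^*C^{(1)}}$-algebras $W_*\CO_{T^{1,*}_C} \hookrightarrow \CD$ of \cite[Lemma 2.1.1]{BMR} pulls back along $a: Y_U \to T^*C^{(1)}$ and, via the base-change isomorphism as in (\ref{wbc}), exhibits $E$ as a module over $W_{U,*}\CO_{Y_U^p}$ where $Y_U^p := Y_U \times_{C^{(1)}} C$ and $W_U$ is the base change of $W$. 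Since $W_U$ is affine, \cite[Proposition 12.5]{GW} produces a quasi-coherent sheaf $L_E$ on $Y_U^p$ with $W_{U,*}L_E = E$, and \cite[Proposition 2.3]{OV} applied fiberwise (together with the fact that being a line bundle is checked on fibers for $U$-flat sheaves, $E$ being a vector bundle) shows $L_E$ is a line bundle on $Y_U^p$. Morphisms of splittings $E \xrightarrow{\sim} E'$ in $\CS(U)$ are, by definition, $\CO_{Y_U}$-linear isomorphisms compatible with $\phi,\phi'$; since the $W_{U,*}\CO_{Y_U^p}$-module structure is read off from $\phi$ via (\ref{wbc}), such an isomorphism is automatically $W_{U,*}\CO_{Y_U^p}$-linear and hence descends to an isomorphism $L_E \xrightarrow{\sim} L_{E'}$. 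Compatibility with base change along $U' \to U$ follows from the compatibility of (\ref{wbc}) and of \cite[Proposition 12.5]{GW} with base change. This gives the morphism of algebraic stacks $\Lambda: \CS \to \CP^{st}_{Y^p/A}$.

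Next I would identify the fiber product. By construction $\CP^{st,o}_{Y^p/A} = \CP^{st}_{Y^p/A} \times_{\CP^{sh}_{Y^p/A}} \CP^{sh,o}_{Y^p/A}$ (Definition \ref{defpiost}), and by Proposition \ref{picost}.(8) its $U$-points are the line bundles $L$ on $Y_U^p$ whose restriction to each geometric fiber lies in the identity component of the absolute Picard group. The fiber product $\CS \times_{\CP^{st}_{Y^p/A}} \CP^{st,o}_{Y^p/A}$ over $U$ then consists of pairs: a splitting $E \in \CS(U)$ together with the datum that $\Lambda(E) = L_E$ lies (up to the identification recorded by $\CP^{st,o}_{Y^p/A} \hookrightarrow \CP^{st}_{Y^p/A}$, which is an open immersion of substacks by Proposition \ref{picost}.(6)) in $\CP^{st,o}_{Y^p/A}(U)$. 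Since $\CP^{st,o}_{Y^p/A} \hookrightarrow \CP^{st}_{Y^p/A}$ is a monomorphism, this fiber product is simply the full subgroupoid of $\CS(U)$ of those $E$ with $L_E \in \CP^{st,o}_{Y^p/A}(U)$. By Lemma \ref{vglb} this is exactly $\CS^o(U)$, and the identification is compatible with morphisms and base change since all the conditions involved are. This yields the isomorphism of $2$-functors (\ref{soopen}); that both sides are algebraic stacks follows since $\CS$ and $\CP^{st,o}_{Y^p/A}$ are, and fiber products of algebraic stacks are algebraic.

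I expect the main obstacle to be the family version of the statement ``$L_E$ is a line bundle,'' i.e.\ upgrading \cite[Proposition 2.3]{OV} from a single spectral curve to an arbitrary base $U$. The cited result is fiberwise in nature, so the argument is to combine it with the flatness of $E$ over $U$: $L_E$ is $U$-flat (being a pushforward-retract of the $U$-flat sheaf $E$ along the affine, flat morphism $W_U$), its formation commutes with base change to geometric points of $U$, and on each such fiber it is invertible by \cite[Proposition 2.3]{OV}; a coherent $U$-flat sheaf on $Y_U^p$ that is invertible on every geometric fiber is invertible, so $L_E$ is a line bundle. A minor secondary point is checking that the open immersion $\CS^o \hookrightarrow \CS$ that is implicit in Definition \ref{defvgs} is compatible with the one coming from the fiber product presentation, which is automatic once the groupoids are identified as above. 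The remaining verifications — functoriality of (\ref{wbc}) in $U$, base-change compatibility of \cite[Proposition 12.5]{GW}, and $W_{U,*}$-linearity of splitting isomorphisms — are routine and I would not spell them out in detail.
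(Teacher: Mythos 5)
Your proposal follows the same route as the paper: the morphism is defined by $E\mapsto L_E$ via the inclusion (\ref{wbc}), isomorphisms of splittings descend to isomorphisms of the $L_E$'s, and the identification (\ref{soopen}) is then exactly Lemma \ref{vglb}. Your extra care in upgrading the fiberwise statement ``$L_E$ is a line bundle'' to families (flatness of $L_E$ over $U$ via the affine morphism $W_U$, plus fiberwise invertibility) is a correct filling-in of a point the paper leaves implicit, not a different argument.
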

\begin{proof}
The morphism $\CS\to \CP^{st}_{Y^p/A}$ is given by sending a splitting $E$ on $Y_U$ to the line bundle $L_E$ on $Y_U^p$.
By the inclusion (\ref{wbc}), we see that a morphism of splittings $E\to E'$ defines a morphism of line bundles $L_{E}\to L_{E'}$.
Therefore we indeed get a morphism of stacks. The isomorphism (\ref{soopen}) then follows from Lemma \ref{vglb}.
\end{proof}

\begin{lem}
\label{poso}
The stack $\CS^o$ is a torsor under the Picard stack $\CP^{st,o}_{Y/A}.$
\end{lem}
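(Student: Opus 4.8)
The plan is to deduce the $\CP^{st,o}_{Y/A}$-torsor structure on $\CS^o$ from the already-established $\CP^{st}_{Y/A}$-torsor structure on $\CS$ (Proposition \ref{pstor}), by checking that the action map restricts appropriately. First I would recall that the action $act:\CP^{st}_{Y/A}\times_{A}\CS\to\CS$ sends a pair $(L,E)$ to $E\otimes L$ (with the induced algebra isomorphism $\End(E\otimes L)\cong\End(E)$, since tensoring by a line bundle does not change the endomorphism algebra). The key computation is to track what this does to the associated line bundle $L_E$ of Definition \ref{defle}: I claim that $L_{E\otimes L}\cong L_E\otimes W^*(\,\cdot\,)$ — more precisely, since $W_*\CO_{Y^p_a}\hookrightarrow\CD|_{Y_a}$ is an algebra map and $W_*L_E=E$, we have $W_*(L_E\otimes W^*M)=E\otimes M$ for any line bundle $M$ on $Y_a$ by the projection formula; hence $L_{E\otimes M}=L_E\otimes W^*M$. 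Combined with the fact that $\deg_{\Gamma_i^p}(W^*M)=\deg_{\Gamma_i}(M)$ for the pullback under the affine (indeed, finite flat of degree $p$... one checks $W|_{\Gamma_i^p}:\Gamma_i^p\to\Gamma_i$ is an isomorphism, as $\Gamma_i^p=\Gamma_i\times_{C^{(1)}}C$) this shows that $M\in\CP^{st,o}_{Y/A}(U)$ if and only if $W^*M\in\CP^{st,o}_{Y^p/A}(U)$, i.e., the equivalence $\CP^{st}_{Y/A}\simeq\CP^{st}_{Y^p/A}$ induced by pullback along $W$ restricts to an equivalence $\CP^{st,o}_{Y/A}\simeq\CP^{st,o}_{Y^p/A}$.

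With this in hand, the argument runs as follows. By Lemma \ref{vglb}, a splitting $E$ lies in $\CS^o(U)$ iff $L_E\in\CP^{st,o}_{Y^p/A}(U)$. If $M\in\CP^{st,o}_{Y/A}(U)$ and $E\in\CS^o(U)$, then $L_{E\otimes M}=L_E\otimes W^*M$ is a product of two elements of $\CP^{st,o}_{Y^p/A}(U)$, hence lies in $\CP^{st,o}_{Y^p/A}(U)$ (the identity component is a subgroup), so $E\otimes M\in\CS^o(U)$; thus $act$ restricts to a morphism $\CP^{st,o}_{Y/A}\times_A\CS^o\to\CS^o$. Conversely, if $E,E'\in\CS^o(U)$ then, since $\CS$ is a $\CP^{st}_{Y/A}$-torsor, there is (fppf-locally on $U$) a unique $M\in\CP^{st}_{Y/A}(U)$ with $E'\cong E\otimes M$; then $W^*M=L_{E'}\otimes L_E^{-1}\in\CP^{st,o}_{Y^p/A}(U)$, so by the equivalence above $M\in\CP^{st,o}_{Y/A}(U)$. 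This shows the action of $\CP^{st,o}_{Y/A}$ on $\CS^o$ is simply transitive fppf-locally, and $\CS^o$ is nonempty fppf-locally over $A$ because $\CS$ is (splittings exist étale-locally, and by adjusting by a line bundle of appropriate multidegree — which is possible since $\CP^{st}_{Y/A}\to\CP^{st}_{Y/A}/\CP^{st,o}_{Y/A}$ maps onto the multidegree lattice — one lands in $\CS^o$). Since $\CS^o$ is an open substack of the algebraic stack $\CS$ (Corollary, isomorphism (\ref{soopen})) and $\CP^{st,o}_{Y/A}$ is algebraic (Proposition \ref{picost}), this exhibits $\CS^o$ as a $\CP^{st,o}_{Y/A}$-torsor.

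The main obstacle I anticipate is the bookkeeping around the projection $W:T^{1,*}_C\to T^*C^{(1)}$ and the identification $L_{E\otimes M}=L_E\otimes W^*M$: one must be careful that the algebra inclusion (\ref{wbc}) is compatible with the module structures in the way needed for the projection-formula computation, and that the multidegree-counting identity $\deg_{\Gamma_i^p}(W^*M)=\deg_{\Gamma_i}(M)$ holds — this last point is cleanest if one verifies that $W$ restricts to an isomorphism on each component $\Gamma_i^p\xrightarrow{\sim}\Gamma_i$ (equivalently, that $\Gamma_i\to C^{(1)}$ pulls back along the Frobenius $C\to C^{(1)}$ to an integral curve mapping isomorphically to $\Gamma_i$, which follows from the spectral-curve description and flat base change). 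A secondary, more routine point is checking nonemptiness of $\CS^o$ fppf-locally, i.e., that within each $\CP^{st}_{Y/A}$-torsor fiber one can always translate a splitting into one of the prescribed multidegree; this is exactly the surjectivity of the multidegree map, which is available from (\ref{inj}) applied fiberwise together with the smoothness of $\CP^{sh}_{Y/A}/A$.
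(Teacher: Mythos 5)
Your core argument---two objects of $\CS^o(U)$ differ, by Proposition \ref{pstor}, by a line bundle $M$ on $Y_U$, and degree bookkeeping forces $M\in\CP^{st,o}_{Y/A}(U)$---is the same as the paper's, which however works directly on $Y$: condition (\ref{degcond}) concerns $\mathrm{deg}_{\Gamma_i}(E|_{\Gamma_i})$, twisting by $M$ shifts this by $p\,\mathrm{deg}_{\Gamma_i}(M|_{\Gamma_i})$, so $M$ has multidegree zero fiberwise and lies in the identity component by the injectivity in (\ref{inj}). Your detour through $L_E$ and Lemma \ref{vglb} also works, but one sub-step is false as stated: $W|_{\Gamma_i^p}\colon\Gamma_i^p\to\Gamma_i$ is the base change of the relative Frobenius, hence a finite flat degree $p$ universal homeomorphism, \emph{not} an isomorphism (the paper only ever uses that it is affine, resp.\ a homeomorphism); accordingly $\mathrm{deg}_{\Gamma_i^p}(W^*M)=p\,\mathrm{deg}_{\Gamma_i}(M)$, and the claimed ``equivalence $\CP^{st}_{Y/A}\simeq\CP^{st}_{Y^p/A}$ by pullback'' does not hold. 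Since your argument only uses the vanishing of multidegrees, the needed statement ($M\in\CP^{st,o}_{Y/A}(U)$ iff $W^*M\in\CP^{st,o}_{Y^p/A}(U)$) survives once the factor $p$ is inserted, so this is a repairable slip rather than a structural flaw.

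The genuine gap is your fppf-local nonemptiness claim. It is not true that one can always translate a splitting into $\CS^o$ by ``a line bundle of appropriate multidegree'': by (\ref{inj}) the multidegree map on line bundles only has finite-index image (on a component $m_i\Gamma_i$ the attainable degrees form $m_id_i\BZ$ for some $d_i$), and, more to the point, twisting a rank $p$ splitting by a line bundle changes the splitting's multidegree by $p$ times the line bundle's multidegree, so a given splitting can only be moved within the coset $\CR_a$ appearing in Lemma \ref{gminv}. Whether the very good multidegree lies in that coset on every fiber is precisely the delicate point of the paper: it is established not by surjectivity of a multidegree map but via $\BG_m$-invariance (Lemma \ref{gminv}), the explicit very good splitting $Fr_*\CO_C$ over the nilpotent fiber (Lemma \ref{vgono}), and smoothness, yielding Proposition \ref{vgsurj}; this is also the failure mode behind the erratum discussed in Remark \ref{mainmis}. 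Note, however, that Lemma \ref{poso} as stated and proved in the paper only asserts the (pseudo-)torsor property---that the difference of two very good splittings lies in $\CP^{st,o}_{Y/A}$---so your parts establishing that the action restricts and that differences land in $\CP^{st,o}_{Y/A}$ already cover what is claimed; the nonemptiness assertion should be deleted or deferred to Proposition \ref{vgsurj}.
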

\begin{proof}
Given Proposition \ref{pstor}, we know that for an $A(C^{(1)})$-scheme $U$, two splittings in $\CS^o(U)$ differ by a unique up to an isomorphism line bundle $R_U$ on $Y_U:=Y\times_{A(C^{(1)})}U.$ By the degree restrictions on $\CS^o$, we see that $R_U$ has to lie in $\CP^{st,o}_{Y/A}(U)$.
\end{proof}

\begin{prop}\label{vgopen}
$\CS^o$ is an open algebraic substack of $\CS$ over $Sch_{A(C^{(1)})}^{fppf}$.
\end{prop}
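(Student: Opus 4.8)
The plan is to deduce openness of $\CS^o \hookrightarrow \CS$ from the isomorphism (\ref{soopen}), $\CS^o \cong \CS \times_{\CP^{st}_{Y^p/A}} \CP^{st,o}_{Y^p/A}$, together with the fact (Proposition \ref{picost}.(6)) that $\CP^{st,o}_{Y^p/A}$ is an open substack of $\CP^{st}_{Y^p/A}$. Since open immersions are stable under base change in the category of algebraic stacks, the base change of the open immersion $\CP^{st,o}_{Y^p/A} \hookrightarrow \CP^{st}_{Y^p/A}$ along the morphism $\CS \to \CP^{st}_{Y^p/A}$ of the preceding Corollary is again an open immersion; by (\ref{soopen}) this base change is exactly $\CS^o \hookrightarrow \CS$, which gives the claim. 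The algebraicity of $\CS^o$ is then automatic, being an open substack of the algebraic stack $\CS$ (Proposition \ref{pstor}).

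First I would recall that Proposition \ref{picost}.(6) gives that $\CP^{st,o}_{Y^p/A}$ is open in $\CP^{st}_{Y^p/A}$ — note this requires applying that Proposition to the universal spectral curve $Y^p/A$ for the line bundle $\omega_C^{\otimes p}$ on $C$ rather than for $\omega_{C^{(1)}}$ on $C^{(1)}$, which is legitimate since the statement of Proposition \ref{picost} only used that $Y/A$ is a universal spectral curve. Second, I would invoke the Corollary just above (the morphism $\CS \to \CP^{st}_{Y^p/A}$ sending $E \mapsto L_E$, and the Cartesian square (\ref{soopen})). Third, I would cite the stability of open immersions of algebraic stacks under base change (e.g. \cite[\href{https://stacks.math.columbia.edu/tag/04XB}{Tag 04XB}, \href{https://stacks.math.columbia.edu/tag/06FD}{Tag 06FD}]{Sta22}) to conclude that the projection $\CS \times_{\CP^{st}_{Y^p/A}} \CP^{st,o}_{Y^p/A} \to \CS$ is an open immersion. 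Finally, transporting along the isomorphism (\ref{soopen}) identifies this with $\CS^o \hookrightarrow \CS$.

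I do not expect a serious obstacle here: the statement is essentially a formal consequence of results already assembled in the excerpt. The only point requiring a little care is bookkeeping — making sure that the morphism $\CS \to \CP^{st}_{Y^p/A}$ and the stack $\CP^{st,o}_{Y^p/A}$ are genuinely defined over the same base $A = A(C^{(1)})$ so that the fiber product and the base-change argument make sense, and that "open substack" in Proposition \ref{picost}.(6) is being used for the curve $Y^p/A$ and not inadvertently for $Y/A$. One could alternatively give a direct argument: openness can be checked fppf-locally on $\CS$, and the degree condition (\ref{degcond}) is an open condition on the base because, by Lemma \ref{vglb}, it is equivalent to the line bundle $L_E$ lying fiberwise in the identity component of the relative Picard scheme, and the locus where a section of a family lands in the identity component is open by \cite[XIII, Theorem 4.7]{sga6} (the very input already used in Proposition \ref{picost}.(4)). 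But the fiber-product argument via (\ref{soopen}) is cleaner and I would present that.
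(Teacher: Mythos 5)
Your argument is correct and is essentially the paper's own proof: the paper likewise deduces the statement immediately from the isomorphism (\ref{soopen}) together with Proposition \ref{picost} (openness of $\CP^{st,o}$ in $\CP^{st}$), with base-change stability of open immersions doing the formal work. Your added remark that Proposition \ref{picost} must be applied to $Y^p/A$ (the spectral curves for $\omega_C^{\otimes p}$) rather than $Y/A$ is a sensible bookkeeping point and causes no difficulty.
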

\begin{proof}
The Proposition immediately follows from the isomorphism (\ref{soopen}) and Theorem \ref{picost}.(1).
\end{proof}

\begin{cor}\label{vgsm}
The stack $\CS^o$ is smooth over $A$.
\end{cor}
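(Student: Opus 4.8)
The plan is to deduce the smoothness of $\CS^o$ over $A$ directly from the structural results already established. First I would recall from Proposition~\ref{vgopen} (equivalently, from the isomorphism (\ref{soopen})) that $\CS^o$ is an open substack of $\CS$. Since smoothness is local on the source in the smooth (indeed the Zariski) topology, and an open immersion is in particular smooth, it suffices to know that $\CS$ itself is smooth over $A$; but that is exactly the content of Proposition~\ref{pstor}. Hence the one-line argument: $\CS^o \hookrightarrow \CS \to A$ is a composition of an open immersion with a smooth morphism, and composites of smooth morphisms are smooth.

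Alternatively, and perhaps more in keeping with the way the section is organized, I would argue via Lemma~\ref{poso}: $\CS^o$ is a torsor under $\CP^{st,o}_{Y/A}$. A torsor under a smooth group stack (or Picard stack) over $A$ is, fppf-locally on $A$, isomorphic to that group stack, so $\CS^o \to A$ acquires whatever local property $\CP^{st,o}_{Y/A} \to A$ has; and $\CP^{st,o}_{Y/A}$ is smooth over $A$ by Proposition~\ref{picost}.(7). Since smoothness descends along fppf covers (it is fppf-local on the base), this gives the claim. Either route works; I would present the first since it is the shortest and uses only the immediately preceding proposition together with Proposition~\ref{pstor}.

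There is essentially no obstacle here: the statement is a formal corollary. The only minor point to be careful about is the logical dependency — one should make sure not to invoke Corollary~\ref{vgsm} circularly in the proof of any earlier result — but as stated it sits at the end of the chain $\text{Prop.~\ref{pstor}} \Rightarrow \text{Prop.~\ref{vgopen}} \Rightarrow \text{Cor.~\ref{vgsm}}$, so this is not an issue. Concretely, I would write:

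\begin{proof}
By Proposition~\ref{vgopen}, $\CS^o$ is an open substack of $\CS$, and open immersions are smooth. By Proposition~\ref{pstor}, $\CS$ is smooth over $A$. Since a composition of smooth morphisms is smooth, $\CS^o$ is smooth over $A$. (Alternatively, by Lemma~\ref{poso} the stack $\CS^o$ is a $\CP^{st,o}_{Y/A}$-torsor, hence fppf-locally on $A$ isomorphic to $\CP^{st,o}_{Y/A}$, which is smooth over $A$ by Proposition~\ref{picost}.(7); smoothness being fppf-local on the base, the claim follows.)
\end{proof}
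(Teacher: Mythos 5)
Your proposal is correct and is essentially the paper's own argument: the paper likewise combines the openness of $\CS^o$ in $\CS$ (Proposition~\ref{vgopen}) with the smoothness of $\CS$ over $A$ (stated in Proposition~\ref{pstor}, with the paper citing the original reference for it), using that open immersions are smooth and compositions of smooth morphisms are smooth. The alternative torsor-based argument you sketch is also valid but not needed.
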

\begin{proof}
The smoothness of $\CS/A$ is proved in \cite[Lemma 3.25]{Groch}. 
Combined with Proposition \ref{vgopen}, we obtain the Corollary, since open immersion is smooth and composition of smooth morphisms is also smooth.
\end{proof}

\begin{rmk}
Proposition \ref{vgopen} implies that the morphism $\CS^o\to \CS$ is universally open. Below we give a geometric proof of this fact:
\begin{lem}\label{open1}
The morphism of stacks $\CS^o\hookrightarrow\CS$ is universally open. 
\end{lem}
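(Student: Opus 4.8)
The plan is to exhibit $\CS^o \hookrightarrow \CS$ as the base change of a morphism that is manifestly universally open, namely the inclusion $\CP^{st,o}_{Y^p/A} \hookrightarrow \CP^{st}_{Y^p/A}$, and to give a direct argument for why the latter is universally open. By the isomorphism (\ref{soopen}), $\CS^o \cong \CS \times_{\CP^{st}_{Y^p/A}} \CP^{st,o}_{Y^p/A}$, so $\CS^o \to \CS$ is the pullback along $\CS \to \CP^{st}_{Y^p/A}$ of the inclusion $\CP^{st,o}_{Y^p/A} \hookrightarrow \CP^{st}_{Y^p/A}$. Since universal openness is stable under base change, it suffices to show $\CP^{st,o}_{Y^p/A} \hookrightarrow \CP^{st}_{Y^p/A}$ is universally open. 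This reduces, via the $\BG_m$-gerbe structures of Lemma \ref{trivprop}.(3) and Proposition \ref{picost}.(5) (openness can be checked after the smooth surjection from the gerbe, or equivalently on the coarse Picard algebraic spaces), to showing that $\CP^{sh,o}_{Y^p/A} \hookrightarrow \CP^{sh}_{Y^p/A}$ is universally open — but this is already recorded: it is a quasi-compact open immersion by Proposition \ref{picost}.(4) (equivalently \cite[XIII, Theorem 4.7]{sga6} together with (\ref{oandtau})), and open immersions are universally open.

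The phrase ``geometric proof'' in the Remark suggests, however, that the intended argument is more hands-on and does not simply invoke (\ref{soopen}). The alternative route I would take is to prove openness fiberwise using semicontinuity of multidegrees. Fix a geometric point and a point $s$ of the source lying over $\CS$; the condition (\ref{degcond}) cutting out $\CS^o$ inside $\CS$ is a condition on the multidegree $(\mathrm{deg}_{\Gamma_i}(E_u|_{\Gamma_i}))_i$ of the restriction of the universal splitting to the irreducible components $\Gamma_i$ of the spectral curve $Y_u$. Over a family, as $u$ varies, the partition $Y_u = \sum_i m_i \Gamma_i$ can degenerate (components can come together), but the total degree is locally constant, and the multidegree — the vector of component degrees — is locally constant on the locus where the combinatorial type of the spectral curve is constant, while under specialization the fiber structure of $\CP^{sh}/\CP^{sh,o}$ is governed by the injection (\ref{inj}) of \cite[\S9, Cor. 14]{BLR}. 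One then argues that the very-good locus is a union of connected components in each fiber of $\CS \to \CS$-nothing, i.e. that it is open in $\CS$, because $\CP^{st,o}_{Y^p/A} \hookrightarrow \CP^{st}_{Y^p/A}$ being an open immersion (again Proposition \ref{picost}) transports, via the $\CO_{Y^p}$-linear identification $E \mapsto L_E$ of Definition \ref{defle} and Lemma \ref{vglb}, to the statement that the very-good condition is Zariski-open on $\CS$.

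The main obstacle I anticipate is bookkeeping the passage between $Y$ and $Y^p = Y \times_{C^{(1)}} C$ cleanly: one must check that the morphism $\CS \to \CP^{st}_{Y^p/A}$ of the Corollary following Lemma \ref{vglb} behaves well in families (it does, since $W$ is affine and finite locally free, so $W_*$ commutes with base change), and that ``lies in the identity component'' pulls back correctly — this is exactly the content of Lemma \ref{vglb}, which equates the degree condition (\ref{degcond}) on $E$ with $L_E \in \CP^{st,o}_{Y^p/A}$. Granting Lemma \ref{vglb} and Proposition \ref{picost}.(6) (that $\CP^{st,o}_{Y^p/A}$ is an open substack of $\CP^{st}_{Y^p/A}$), the rest is formal: base change preserves universal openness, so $\CS^o = \CS \times_{\CP^{st}_{Y^p/A}} \CP^{st,o}_{Y^p/A} \to \CS$ is an open immersion, hence universally open, which is the claim. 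The only genuine work, then, is either (a) citing the SGA 6 representability of the $\tau$-component, or (b) if one insists on a self-contained ``geometric'' proof, invoking local constancy of multidegrees in flat proper families of curves as in \cite[\S9]{BLR} to see directly that (\ref{degcond}) is an open condition.
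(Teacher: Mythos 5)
Your first paragraph is a correct proof of the statement, but it is not the paper's proof: by (\ref{soopen}), $\CS^o\to\CS$ is the base change along $\CS\to\CP^{st}_{Y^p/A}$ of $\CP^{st,o}_{Y^p/A}\hookrightarrow\CP^{st}_{Y^p/A}$, which is an open immersion by Definition \ref{defpiost} together with Proposition \ref{picost}.(4),(6) (stated there for $Y/A$ but applying verbatim to $Y^p/A$), and open immersions are universally open and stable under base change. This, however, is exactly the formal argument already recorded as Proposition \ref{vgopen}, and the sentence introducing the lemma makes clear that its purpose is to give an independent, \emph{geometric} proof not routed through the SGA~6 representability of the $\tau$-component. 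The paper's proof instead uses Lemma \ref{monost} to see that $T^o\to T$ is a universally injective finite-type monomorphism of schemes, Chevalley's theorem to get constructibility of $|T^o|$, and the DVR criterion to reduce to: if a splitting $E_\eta$ over the generic fiber specializes to a very good splitting $E_x$, then $E_\eta$ is very good. The geometric heart is that last step, proved by twisting $\det(E_R)^{\otimes d}$ by a power of a pulled-back relatively ample bundle $M_R$ and using the ampleness characterization of the torsion component (\cite[XIII, Thm.\ 4.6]{sga6}) plus openness of ampleness. What each approach buys: yours is shorter and purely formal but leans on the quasi-compact open immersion of \cite[XIII, Thm.\ 4.7]{sga6} via (\ref{oandtau}); the paper's is self-contained at the level of degrees and ample bundles, which is why it is offered as a remark. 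Your sketched ``semicontinuity of multidegrees'' variant, by contrast, does not work as stated: multidegrees are not locally constant when the spectral curve degenerates (components merge), and the claim that the very good locus is a union of fiberwise connected components is unjustified; making that idea precise is essentially what the ampleness argument in the paper accomplishes.
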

\begin{proof}
Let $U$ be an $\CS$-scheme and let $U^o:=U\times_{\CS}\CS^o$ be the base change.
We need to show that for all $U$-scheme $T$ with $T^o:=T\times_U U^o$, we have that $T^o\to T$ is open. 
All the assumptions in Lemma \ref{monost} below are satisfied, thus $T^o\hookrightarrow T$ is a finite type universally injective morphism of schemes. 
By noetherian approximation, we can assume that $T$ and $T^o$ are noetherian. 
Therefore, the morphism $T^o\hookrightarrow T$ is of finite presentation \cite[\href{https://stacks.math.columbia.edu/tag/01TX}{Tag 01TX}]{Sta22}.
By Chevalley's Theorem \cite[\href{https://stacks.math.columbia.edu/tag/054K}{Tag 054K}]{Sta22}, we have that $|T^o|$ is constructible inside $|T|$. 
By \cite[\href{https://stacks.math.columbia.edu/tag/0542}{Tag 0542}, \href{https://stacks.math.columbia.edu/tag/054F}{Tag 054F}]{Sta22}, 
to show that $|T^o|\subset|T|$ is open, it suffices to show that given a DVR $R$ with generic point $\eta$ and special point $x$, and an $R$-point of $T$, if the image of $\eta$ lies in $T^o$ then the image of $x$ also lies in $T^o$. Considering the definition of $\CS$ and $\CS^o$, we are reduced to showing the following:
given a splitting $E_{\eta}$ over $Y_{\eta}$ that specializes to a very good splitting $E_x$ over $Y_x$, then $E_{\eta}$ is also very good. 

Let $H_R$ be a relatively ample line bundle on $C_R^{(1)}/\Spec{R}$. Let $H_x$ be of degree $d$ on $C^{(1)}_x$. Let $\pi_R: Y_R\to C_R$ be the projection. Let $M_R:=\pi_R^* H_R$. We have that $M_R$ is relatively ample on $Y_R/R$.
Write $Y_x=\sum_i \Gamma_i$ where $\Gamma_i$'s are pairwise distinct irreducible spacetral curves with degree $r_i$.
We have that the pullback $\mathrm{deg}_{\Gamma_i}(M_x)=dr_i$.
Let $L_R:=\mathrm{det}(E_R)^{\otimes d}\otimes M_R^{(p-1)(1-g)}$.
Since $E_x$ is very good, we have that $\mathrm{deg}_{\Gamma_i}(E_x)=r_i(1-p)(1-g)$.
Therefore we have that $L_x$ is in $\CP^{abs, o}(Y_x)$. Apply \cite[XIII, Theorem 4.6]{sga6}, we have that $L_x^{\otimes n}\otimes M_x$ stays ample for all $n\in\BZ$. By the openness of ampleness, we have that $L_R^{\otimes n}\otimes M_R$ is also ample for all $n$. Apply \textit{loc.cit.} again, we have that $L_{\eta}=\mathrm{det}(E_{\eta})^{\otimes d}\otimes M_{\eta}^{(p-1)(1-g)}$ is in $\CP^{st,o}(Y_{\eta})$. Write $Y_{\eta}=\sum_i\Gamma_{i,\eta}$ where each $\Gamma_{i,\eta}$ is irreducible of degree $r_i'$ over $C$. Since $H_{\eta}$ is also of degree $d$ on $C_{\eta}^{(1)}$, we have that $\mathrm{deg}_{\Gamma_{i,\eta}}(M_{\eta})=dr_i'$. Combined with the fact that $\mathrm{deg}_{\Gamma_{i,\eta}}(L_{\eta})=0$, we see that $E_{\eta}$ is a very good splitting. 
\end{proof}

The following lemma is used in the proofs of Lemma \ref{open1} and Theorem \ref{vg is s}. It is well-known, but we cannot find a suitable reference.
\begin{lem}\label{monost}
Let $A$ be a scheme.
Let $N$ and $M$ be two algebraic stacks over $Sch^{fppf}_A$ that are (resp. locally) of finite type over $A$. 

Suppose that, when viewed as categories, there is a fully faithful functor $f: N\to M$, then $f$, when viewed as a morphism between algebraic stacks, is representable by a monomorphism of schemes, (resp. locally) of finite type, i.e., given any $M$-scheme $T$ with $T_N:T\times_M N$, we have that $T_N\to T$ is a monomorphism of schemes, (resp. locally) of finite type.

In particular, the morphism $f$ is representable by a universally injective morphism of schemes, i.e., $T_N\to T$ is a universally injective morphism of schemes.
\end{lem}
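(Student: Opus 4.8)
The plan is to factor the statement into two parts. Part (A): as a $1$-morphism of algebraic stacks, $f$ is a monomorphism which is (resp.\ locally) of finite type over $A$. Part (B): a monomorphism of algebraic stacks with scheme target which is (resp.\ locally) of finite type is automatically representable by a monomorphism of schemes of the same type. The final sentence of the lemma is then free, since a monomorphism is universally injective. Note in advance that one cannot hope to strengthen the conclusion to ``$f$ is an immersion'': a fully faithful subfunctor need be neither open nor locally closed (for instance the subfunctor of ``integer-valued points'' inside $\BA^1$), so the representability input in Part (B) is genuinely needed.

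For Part (A), monomorphism: recall that a $1$-morphism of categories fibered in groupoids is a monomorphism precisely when its relative diagonal $\Delta_f\colon N\to N\times_M N$ is an equivalence (see e.g.\ \cite{Vistoli}). Since $f$ is fully faithful — equivalently, fully faithful on each fiber groupoid $N(T)\to M(T)$ — this is immediate: an object of $N\times_M N$ over an $A$-scheme $T$ is a triple $(x,y,\phi)$ with $x,y\in N(T)$ and an isomorphism $\phi\colon f(x)\xrightarrow{\sim} f(y)$, and full faithfulness produces a unique $\psi\colon x\to y$ with $f(\psi)=\phi$, necessarily an isomorphism because $N(T)$ is a groupoid; this identifies $(x,y,\phi)$ with $\Delta_f(x)$, so $\Delta_f$ is essentially surjective, and fully faithful for the same reason. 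Hence $f$ is a monomorphism. By stability of monomorphisms under base change, for every $M$-scheme $T$ the morphism $T_N\to T$ is a monomorphism of algebraic stacks; equivalently $\Delta_{T_N/T}$ is an isomorphism, so the relative inertia $I_{T_N/T}$ is trivial, so $T_N\to T$ is representable by algebraic spaces, and since $T$ is a scheme $T_N$ is an algebraic space.

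For the finite-type part of (A) I would argue along a scheme atlas $V\to M$, so that the only diagonals one needs to control are diagonals of schemes: the morphism $V\times_M N\to V$ has source whose structure map to $A$ factors as $V\times_M N\to N\to A$ with the first arrow a smooth base change and the second locally of finite type, hence $V\times_M N\to A$ is locally of finite type; $V\to A$ is locally of finite type; and $\Delta_{V/A}$ is an immersion because $V$ is a scheme; therefore $V\times_M N\to V$ is locally of finite type, and since this is smooth-local on the target, $f$ is locally of finite type. The finite-type variant follows the same way, using that $N$ and $M$ are quasi-compact over $A$. For Part (B): a monomorphism is separated (its diagonal is an isomorphism, in particular a closed immersion), and a monomorphism which is locally of finite type is locally quasi-finite, since each of its fibers is a monomorphism into the spectrum of a field and hence empty or an isomorphism onto it. Thus $T_N\to T$ is a separated, locally quasi-finite morphism of algebraic spaces over the scheme $T$, hence representable by schemes by Zariski's Main Theorem for algebraic spaces \cite{Sta22}; so $T_N$ is a scheme and $T_N\to T$ is a monomorphism of schemes which is (resp.\ locally) of finite type. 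Finally, $\Delta_f$ being an isomorphism is in particular surjective, which is exactly the statement that $f$ is universally injective, giving the last assertion.

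The main obstacle, I expect, is the bookkeeping of the two middle steps rather than any deep point: one must check that the hypothesis ``(resp.\ locally) of finite type over $A$'' genuinely transfers to $f$ itself — this is why I would route through a scheme atlas of $M$ — and, for the quasi-compact half of the finite-type case, be slightly careful about quasi-separatedness (harmless in all applications, where $A$ is affine). The other point requiring care is invoking the correct form of Zariski's Main Theorem, namely the one promoting a separated locally quasi-finite morphism of algebraic spaces with scheme target to an honest morphism of schemes. Everything else is formal manipulation of fibered categories together with stability of ``monomorphism'' and ``(locally) of finite type'' under base change and composition.
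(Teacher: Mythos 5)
Your proof is correct and follows essentially the same route as the paper's: fully faithful $\Rightarrow$ monomorphism representable by algebraic spaces, transfer of the (resp.\ locally) finite type property by cancellation over $A$, base change to the scheme $T$, and promotion of the algebraic space $T_N$ to a scheme via the chain ``monomorphism $+$ locally of finite type $\Rightarrow$ separated and locally quasi-finite $\Rightarrow$ schematic'' (Zariski's Main Theorem for algebraic spaces). The only differences are cosmetic: the paper cites the relevant Stacks Project lemmas (Tags 04ZZ, 06U9, 0B89) where you unpack and reprove them, and it deduces universal injectivity via ``radicial'' (EGA IV, 17.2.6) rather than your surjective-diagonal criterion.
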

\begin{proof}
Let $T$ be an $M$-scheme. Let $T_N:=T\times_M N$ be the fiber product.
By \cite[\href{https://stacks.math.columbia.edu/tag/04ZZ}{Tag 04ZZ}]{Sta22}, fully faithful functors between algebraic stacks are representable by algebraic spaces and are monomorphisms of algebraic stacks.  
Since both $N$ and $M$ are (resp. locally) of finite type over $A$, we have that $f$ is (resp. locally) of finite type \cite[\href{https://stacks.math.columbia.edu/tag/06U9}{Tag 06U9}]{Sta22}.
Since being a monomoprhism and being (resp. locally) of finite type are stable under base changes \cite[\href{https://stacks.math.columbia.edu/tag/04ZX}{Tag 04ZX}, \href{https://stacks.math.columbia.edu/tag/06FU}{Tag 06FU}]{Sta22}, we have that $T_N\to T$ is a monomorphism of algebraic algebraic spaces, locally of finite type.
By \cite[\href{https://stacks.math.columbia.edu/tag/0B89}{Tag 0B89}]{Sta22}, we have that $T_N\to T$ is a morphism of schemes, locally of finite type, which is also a monomorphism of algebraic spaces. By the beginning of \cite[\href{https://stacks.math.columbia.edu/tag/042K}{Tag 042K}]{Sta22}, we have that $T_N\to T$ is indeed a monomorphism of schemes. By \cite[Proposition 17.2.6]{EGA4}, we have that $T_N\to T$ is radicial. Finally, by \cite[\href{https://stacks.math.columbia.edu/tag/01S4}{Tag 01S4}]{Sta22}, we have that radicial is equivalent to universally injective, thus we have that $T_N\to T$ is universally injective.
\end{proof}
\end{rmk}

\begin{lem}\label{gminv}
The image of $\CS^o$ inside $A$ is invariant under the $\BG_m$-action on $A$.
\end{lem}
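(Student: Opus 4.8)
The plan is to describe the image of $\CS^{o}\to A$ by a numerical congruence that is manifestly constant along $\BG_{m}$-orbits. Recall first that the $\BG_{m}$-action on $A=A(C^{(1)})=\bigoplus_{i=1}^{r}H^{0}(C^{(1)},\omega_{C^{(1)}}^{\otimes i})$ is the standard weighted one, $t\cdot(a_{1},\dots,a_{r})=(ta_{1},\dots,t^{r}a_{r})$, and that it is induced by the fibrewise scaling $m_{t}\colon T^{*}C^{(1)}\to T^{*}C^{(1)}$: for a point $b$ of $A$ the automorphism $m_{t}$ restricts to an isomorphism $\mu_{t,b}\colon Y_{b}\xrightarrow{\sim}Y_{t\cdot b}$ of proper $k$-curves. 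Since $m_{t}$ commutes with the projection to $C^{(1)}$, this $\mu_{t,b}$ identifies the irreducible components $\Gamma_{i}$ of $Y_{b}=\sum_{i}m_{i}\Gamma_{i}$ with those of $Y_{t\cdot b}$, preserving the multiplicities $m_{i}$ and the degrees $r_{i}$ over $C^{(1)}$; hence it identifies the finite-index subgroup $\Lambda_{b}:=\mathrm{Image}\big(\CP^{abs}(Y_{b})\to\bigoplus_{i}\BZ\big)$ from (\ref{inj}) with $\Lambda_{t\cdot b}$, and it carries the integer vector $\big(r_{i}(1-p)\chi(\CO_{C^{(1)}})\big)_{i}$ attached to $Y_{b}$ to the corresponding one for $Y_{t\cdot b}$.

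Next I would identify the image. Fix a geometric point $b$ of $A$. By Proposition \ref{pstor} the fibre $\CS_{b}$ is nonempty and any two splittings of $\CD|_{Y_{b}}$ differ by tensoring with a line bundle on $Y_{b}$; since $E_{0}|_{\Gamma_{i}}$ has generic rank $p$, tensoring a splitting $E_{0}$ by $R\in\CP^{abs}(Y_{b})$ changes $\mathrm{deg}_{\Gamma_{i}}(E_{0}|_{\Gamma_{i}})$ by $p\cdot\mathrm{deg}_{\Gamma_{i}}(R)$. Thus the multidegree vector $\big(\mathrm{deg}_{\Gamma_{i}}(E_{0}|_{\Gamma_{i}})\big)_{i}$ has a well-defined class
\[
\delta(b)\ \in\ \Big(\textstyle\bigoplus_{i}\BZ\Big)\big/\,p\Lambda_{b}
\]
independent of the splitting $E_{0}$, and — by Definition \ref{defvgs} together with the Riemann--Roch bookkeeping already carried out in the proof of Lemma \ref{vglb} — the point $b$ lies in the image of $\CS^{o}\to A$ if and only if $\delta(b)=\delta_{0}(b)$, where $\delta_{0}(b)$ is the class of $\big(r_{i}(1-p)\chi(\CO_{C^{(1)}})\big)_{i}$. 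By the first paragraph $\delta_{0}$ is constant on $\BG_{m}$-orbits, so it remains to prove the same for $\delta$.

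For the invariance of $\delta$, fix $b\ne 0$ and restrict the torsor $\CS\to A$ to the orbit $O:=\BG_{m}\cdot b\cong\BG_{m}$; the morphism $\CS|_{O}\to O$ is smooth (base change of $\CS/A$) and surjective (a torsor), so after an \'etale surjective base change $O'\to O$ there is a splitting $\mathcal E$ of $\CD_{Y}$ over $Y_{O'}:=Y\times_{A}O'$. All fibres of $Y_{O'}\to O'$ are isomorphic to $Y_{b}$, so this family has constant combinatorial type: its irreducible components are flat $O'$-families of integral curves of fixed degree $r_{i}$ over $C^{(1)}$, and restricting the vector bundle $\mathcal E$ to each of them shows, by flatness, that the fibrewise degrees $\mathrm{deg}_{\Gamma_{i}}(\mathcal E|_{\Gamma_{i}})$ are locally constant on $O'$. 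Hence $\delta$ is locally constant on $O$, and since $O$ is connected it is constant there. Combining, $\mathrm{Image}(\CS^{o}\to A)=\{b:\delta(b)=\delta_{0}(b)\}$ is a union of $\BG_{m}$-orbits, which is the assertion.

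I expect the main obstacle to be the middle step — verifying the equivalence ``$b\in\mathrm{Image}(\CS^{o})\iff\delta(b)=\delta_{0}(b)$'' and, in particular, that tensoring by line bundles shifts multidegrees by exactly $p\Lambda_{b}$ — since this requires working carefully with the multiplicity conventions of \S\ref{rkdeg} for torsion-free sheaves on non-reduced spectral curves; it is essentially the computation already underlying Lemma \ref{vglb}. (If one can moreover show that $\delta\equiv\delta_{0}$ for every $b$, e.g.\ by propagating the canonical splitting of $\CD$ along the zero section using smoothness of $\CS/A$, then the image is all of $A$ and the lemma becomes immediate; but the argument above does not need this.)
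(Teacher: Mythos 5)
Your argument is correct and is essentially the paper's own proof in different packaging: both reduce very-goodness to a statement about the coset of achievable multidegrees of splittings (shifted by $p$ times Picard multidegrees), spread splittings out \'etale-locally along the orbit using smoothness of $\CS/A$ and fibrewise nonemptiness, and conclude by local constancy of fibrewise degrees in flat proper families together with the canonical trivialization of the family of spectral curves along a $\BG_m$-orbit. One small wording fix: ``all fibres of $Y_{O'}\to O'$ are isomorphic to $Y_b$'' does not by itself yield constant combinatorial type; instead invoke the trivialization by the homotheties $\mu_{t,b}$ from your first paragraph (equivalently, pull everything back along the orbit map $\BG_m\to A$, which also sidesteps any nontrivial stabilizer), exactly as the paper does.
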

\begin{proof}
We imitate the proof of \cite[Lemma 3.43]{Groch}. 
Let $a\in A(k)$ be a closed point of $A$ corresponding to a spectral curve $Y_a/C^{(1)}$.
The morphism $Y\to A$ is $\BG_m$-equivariant for the natural $\BG_m$-action on $A$ and the $\BG_m$-action on $Y$ induced by homotheties on $T^*C^{(1)}$.
Under the $\BG_m$-action on $A$, the orbit of $a$ has the form $t\cdot a$, which corresponds to a canonically trivializable $\BG_m$-family of spectral curves $Y_{t\cdot a}$. The cohomology class of $\CD|_{Y_{\BG_m\cdot a}}$ in $H^2(Y_{\BG_m\cdot a},\BG_m)$ is not necessarily trivial. 

Let $E$ be a splitting of $\CD$ on $Y_a$. Let $Y_a=\sum_{i=1}^n m_i \Gamma_i$, where $\Gamma_i$'s are pairwise distinct integral curves. By \cite[p.241, Cor.8; p.251, Cor 14]{BLR} the line bundles on $m_i\Gamma_i$ has degrees $m_id_i\BZ$ for some integer $d_i$.
By keeping in mind that the rank of $E$ is $p$, we see that all the splittings on $Y_a$ have multidegrees giving the subset:
\[\CR_a:= (\mathrm{deg}_{m_i\Gamma_i}(E)+m_id_ip\BZ)_{i=1}^n\subset \BZ^n.\]
We are done if we can show that $\CR_a=\CR_{t\cdot a}$ for all $t\in \BG_m(k)$, since this entails that $\CR_{t\cdot a}$ also contains the element $(\mathrm{deg}_{m_i\Gamma_i}(E))_i\in \BZ^n$, and a splitting of this multidegree is very good.
Since $\CR_a$ and $\CR_{t\cdot a}$ consist of direct products of arithmetic progressions, we have that $\CR_a=\CR_{t\cdot a}$ if and only if $\CR_a\cap \CR_{t\cdot a}\ne \emptyset$. 

Let $E_a$ and $E_{t\cdot a}$ be two splittings on $Y_a$ and $Y_{t\cdot a}$ respectively. They have multidegree $\Vec{w}_a\in \CR_{a}$ and $\Vec{w}_{t\cdot a}\in \CR_{t\cdot a}$. By the smoothness of $\CS/A$, there exists \'etale neighborhoods $U_a$ and $U_{t\cdot a}$ of $a$ and $t\cdot a$ inside the orbit $\BG_m\cdot a$, where we can extend $E_a$ and $E_{t\cdot a}$ to splittings $E_{U_a}$ and $E_{U_{t\cdot a}}$ over $U_a$ and $U_{t\cdot a}$ respectively. 
Now pick a closed point $b$ in the image of the fiber product $U_a\times_{\BG_m\cdot a} U_{t\cdot a}$ in $\BG_m\cdot a$.
We have that $E_{U_a}|_{Y_b}$ is a splitting on $Y_b$ with multidegree $\Vec{w}_b$.
We thus have that $\Vec{w}_b=\Vec{w}_a$, hence $\CR_a=\CR_b$.
Similarly, let $\Vec{w}_{b}'$ be the degree of $E_{U_{t\cdot a}}|_{Y_b}$. We have that $\Vec{w}_b'=\Vec{w}_{t\cdot a}$, thus we have that $\CR_{t\cdot a}=\CR_b=\CR_a$. 
\end{proof}

\begin{lem}\label{vgono}
There is a very good splitting on the nilpotent curve $rC^{(1)}$. 
\end{lem}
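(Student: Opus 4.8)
The plan is to recognise $rC^{(1)}$ as the spectral curve $Y_0$ lying over the origin $0\in A(C^{(1)})$, i.e.\ the $r$-th infinitesimal neighbourhood of the zero section $C^{(1)}\subset T^*C^{(1)}$. Its unique reduced irreducible component is $\Gamma_1=C^{(1)}$, occurring with multiplicity $m_1=r$ and spectral degree $r_1=1$ (so indeed $m_1r_1=r$), and therefore, by Definition \ref{defvgs}, a splitting $E$ of $\CD|_{rC^{(1)}}$ is very good if and only if $\mathrm{deg}_{C^{(1)}}(E|_{C^{(1)}})=(1-p)\chi(\CO_{C^{(1)}})\ (=(1-p)(1-g))$. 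So the plan reduces to two steps: (i) produce some splitting of $\CD|_{rC^{(1)}}$, and (ii) twist it by a line bundle to achieve this degree.

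For (i) I would show that the Azumaya algebra $\CD|_{rC^{(1)}}$ is Brauer-trivial. Since $C^{(1)}\hookrightarrow rC^{(1)}$ is a nilpotent thickening, both schemes have the same small \'etale site, and I claim $H^2_{\text{\'et}}(rC^{(1)},\BG_m)=0$. This would follow by d\'evissage from the short exact sequence of \'etale sheaves $1\to(1+\CI)\to\BG_{m,rC^{(1)}}\to\BG_{m,C^{(1)}}\to 1$, where $\CI$ denotes the nilradical of $\CO_{rC^{(1)}}$, combined with the finite filtration of $1+\CI$ whose graded pieces are the quasi-coherent $\CO_{C^{(1)}}$-modules $\CI^j/\CI^{j+1}$: one is reduced to $H^{\ge 2}(C^{(1)},\CF)=0$ for $\CF$ quasi-coherent (Grothendieck vanishing, since $\dim C^{(1)}=1$) and to $H^2_{\text{\'et}}(C^{(1)},\BG_m)=\mathrm{Br}(C^{(1)})=0$ (Tsen's theorem, as $C^{(1)}$ is a smooth curve over the algebraically closed field $k$). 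Since $\mathrm{Br}(-)\hookrightarrow H^2_{\text{\'et}}(-,\BG_m)$, the class of $\CD|_{rC^{(1)}}$ vanishes, so $\CD|_{rC^{(1)}}\cong End_{\CO_{rC^{(1)}}}(E)$ for a locally free sheaf $E$, of rank $p$ since $\CD$ has rank $p^2$ and $rC^{(1)}$ is connected. This provides an object $(E,\phi)$ of $\CS$ over $0$.

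For (ii) I would restrict to the zero section. By \cite[Theorem 2.2.3]{BMR} (equivalently: the $D_C$-module $\CO_C$ has vanishing $p$-curvature, so the corresponding $\CD$-module is supported on the zero section, where it is the rank $p$ bundle $Fr_*\CO_C$, and the induced algebra homomorphism $\CD|_{C^{(1)}}\to End_{\CO_{C^{(1)}}}(Fr_*\CO_C)$ is an isomorphism, being a nonzero homomorphism between Azumaya algebras of the same rank $p^2$), one has $\CD|_{C^{(1)}}\cong End_{\CO_{C^{(1)}}}(Fr_*\CO_C)$. Thus $E|_{C^{(1)}}$ and $Fr_*\CO_C$ are two splitting bundles of the same Azumaya algebra, hence differ by a line bundle: $E|_{C^{(1)}}\cong Fr_*\CO_C\otimes L_0$ with $L_0\in\Pic(C^{(1)})$. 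Riemann--Roch for the finite flat degree $p$ morphism $Fr$ gives $\mathrm{deg}(Fr_*\CO_C)=\chi(Fr_*\CO_C)-p\,\chi(\CO_{C^{(1)}})=(1-g)-p(1-g)=(1-p)(1-g)$, so $\mathrm{deg}(E|_{C^{(1)}})=(1-p)(1-g)+p\,\mathrm{deg}(L_0)$. Now choose a line bundle $M$ on $C^{(1)}$ of degree $-\mathrm{deg}(L_0)$ and replace $E$ by $E\otimes\pi_0^*M$, where $\pi_0\colon rC^{(1)}\to C^{(1)}$ is the projection; this does not change $End(E)$, so $E\otimes\pi_0^*M$ is still a splitting, and now $\mathrm{deg}_{C^{(1)}}\big((E\otimes\pi_0^*M)|_{C^{(1)}}\big)=(1-p)(1-g)$, i.e.\ $E\otimes\pi_0^*M$ is very good by the reduction in the first paragraph.

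The step I expect to be the main obstacle is (i): one must carefully justify the passage from the cohomological vanishing $H^2_{\text{\'et}}(rC^{(1)},\BG_m)=0$ to an honest splitting bundle of rank exactly $p$ — this relies on the injectivity of the Brauer group into \'etale cohomology and on the characterisation of Brauer-trivial Azumaya algebras of rank $p^2$ as endomorphism algebras of locally free sheaves of rank $p$ — as well as on setting up the d\'evissage along the nilpotent ideal precisely. As an alternative to (i) one could, since every smooth projective curve lifts to $W_2(k)$, invoke the splitting of $\CD$ over the formal neighbourhood of the zero section in the spirit of \cite{OV} and restrict it to $rC^{(1)}$; the cohomological argument above is however self-contained.
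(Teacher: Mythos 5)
Your proof is correct. The overall skeleton is the same as the paper's (produce some splitting of $\CD|_{rC^{(1)}}$, compare its restriction to the reduced zero section with $Fr_*\CO_C$, then twist by a line bundle to fix the degree on $C^{(1)}$, which by $m_1=r$, $r_1=1$ is the only condition in Definition \ref{defvgs}), but you handle the two supporting steps differently. For the existence of a splitting on the thickening, the paper simply cites \cite[Lemma 3.23]{Groch}, whereas you give a self-contained Brauer-theoretic argument: invariance of the small \'etale site under the nilpotent thickening, d\'evissage of $\BG_m$ along $1+\CI$ with quasi-coherent graded pieces, Grothendieck vanishing in degrees $\ge 2$ on a curve, and Tsen's theorem, plus the standard fact that a rank $p^2$ Azumaya algebra with trivial class in $H^2_{\text{\'et}}(-,\BG_m)$ is $End$ of a rank $p$ bundle; this is essentially a proof of the cited lemma and is sound. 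For the twisting step, the paper lifts the comparison line bundle $H$ (with $E|_{C^{(1)}}\otimes H\cong Fr_*\CO_C$) to $rC^{(1)}$ via \cite[Lemma 5.11]{LiuQ}, so that the restriction of the twisted splitting is literally $Fr_*\CO_C$; you instead pull back a line bundle $M$ of degree $-\deg(L_0)$ along the projection $\pi_0\colon rC^{(1)}\to C^{(1)}$, which avoids any lifting lemma and only matches degrees rather than isomorphism classes — which is all that very-goodness requires, and your arithmetic (the factor $p$ from tensoring the rank $p$ bundle cancels against the factor $p$ in $\deg(E|_{C^{(1)}})=(1-p)(1-g)+p\deg L_0$) is right. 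The paper also verifies very-goodness of $Fr_*\CO_C$ via Lemma \ref{vglb} ($L_{Fr_*\CO_C}=\CO_C$), while you check the degree directly by Riemann--Roch for the degree $p$ finite flat morphism $Fr$; both are fine. In short: same strategy, with the citation-based existence step replaced by a self-contained cohomological argument (more work, more transparent) and a slightly slicker degree correction via $\pi_0^*M$.
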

\begin{proof}
By \cite[Remark 2.2.5]{BMR}, we have that $Fr_*\CO_C$ is a splitting of $\CD|_{C^{(1)}}$.
By Lemma \ref{vglb}, $Fr_*\CO_C$ very good because (we use the notation $L$ as in Definition \ref{defle}) $L_{Fr_*\CO_C}=\CO_C\in \CP^{st,o}(C).$
By \cite[Lemma 3.23]{Groch}, there exists a splitting $E$ on $rC^{(1)}$. 
Since two splittings differ by tensoring with a line bundle, we have that $E|_{C^{(1)}}\otimes H\cong Fr_*\CO_C$ for a line bundle $H$ on $C^{(1)}$.

By \cite[Lemma 5.11]{LiuQ}, there exists a line bundle $H'$ on $rC^{(1)}$ with $H'|_{C^{(1)}}=H$. The splitting $E\otimes H'$ of $\CD|_{rC^{(1)}}$ satisfies that \[\mathrm{deg}_{C^{(1)}}(E\otimes H')=\mathrm{deg}_{C^{(1)}}(E|_{C^{(1)}}\otimes H|_{C^{(1)}})=\mathrm{deg}_{C^{(1)}}(E|_{C^{(1)}}\otimes H)=\mathrm{deg}_{C^{(1)}}(Fr_*\CO_C)=(1-p)(1-g).\]
Therefore, the splitting $E\otimes H'$ of $\CD|_{rC^{(1)}}$ is very good.
\end{proof}

Recall that an algebraic stack has an underlying topological space consisting of equivalence classes of field-valued points \cite[\href{https://stacks.math.columbia.edu/tag/04XL}{Tag 04XL}]{Sta22}. 
It is shown in \cite[\href{https://stacks.math.columbia.edu/tag/04XI}{Tag 04XI}]{Sta22} that a morphism of algebraic stacks representable by algebraic spaces is surjective (in the sense of \cite[\href{https://stacks.math.columbia.edu/tag/04XB}{Tag 04XB}]{Sta22}) if and only if it induces a surjective function on the underlying topological spaces.
\begin{prop}
\label{vgsurj}
The natural morphism between $k$-stacks $\CS^o\to A(C^{(1)})$ is surjective.
\end{prop}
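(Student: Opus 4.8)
The plan is to show surjectivity of $\CS^o \to A(C^{(1)})$ by combining three ingredients already established: (i) the image of $\CS^o$ in $A = A(C^{(1)})$ is a $\BG_m$-invariant subset, by Lemma \ref{gminv}; (ii) this image is open, since $\CS^o \to A$ is the composition of the open immersion $\CS^o \hookrightarrow \CS$ (Proposition \ref{vgopen}) with the smooth, hence open, morphism $\CS \to A$ (smoothness of $\CS/A$ from \cite[Lemma 3.25]{Groch}); and (iii) the nilpotent curve $r C^{(1)}$ lies in the image, by Lemma \ref{vgono}. The point corresponding to $r C^{(1)}$ is the origin $0 \in A$, the unique $\BG_m$-fixed point of the Hitchin base under the scaling action. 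So the image of $\CS^o$ is a $\BG_m$-invariant open subset of $A$ containing $0$.

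The key step is then the elementary observation that the only $\BG_m$-invariant open subset of $A$ containing the origin is $A$ itself. Indeed, $A$ is an affine space with a linear (positive-weight) $\BG_m$-action, so every orbit closure contains $0$; equivalently, for every $k$-point $a \in A$, the morphism $\BG_m \to A$, $t \mapsto t \cdot a$, extends to $\BA^1 \to A$ sending $0 \mapsto 0$. If $V \subseteq A$ is $\BG_m$-invariant and open with $0 \in V$, then the preimage of $V$ under $\BA^1 \to A$ is an open subset of $\BA^1$ containing $0$, hence nonempty, hence it meets $\BG_m \subset \BA^1$; so some $t \cdot a \in V$, and by $\BG_m$-invariance $a \in V$. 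Thus $V$ contains every $k$-point of $A$. Since $A$ is a scheme of finite type over an algebraically closed field, closed points are dense and an open subset containing all of them equals the whole scheme; so $V = A$.

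Putting this together: the image of $\CS^o \to A$ is open (step ii), $\BG_m$-invariant (step i), and contains $0$ (step iii), hence equals $A$. Because $\CS^o \to A$ is a morphism of algebraic stacks and $A$ is a scheme, surjectivity on underlying topological spaces is exactly surjectivity in the sense of \cite[\href{https://stacks.math.columbia.edu/tag/04XB}{Tag 04XB}]{Sta22}, as recalled just before the statement. This completes the argument.

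The main obstacle is a bookkeeping one rather than a conceptual one: one must be careful that ``image'' means the set-theoretic image on underlying topological spaces, and that openness of $\CS^o \to A$ really does follow from smoothness of $\CS/A$ plus openness of the immersion $\CS^o \hookrightarrow \CS$ — this is where Proposition \ref{vgopen} (or, alternatively, the geometric Lemma \ref{open1}) does the real work, since without openness of the image the $\BG_m$-invariance and the single point $0$ would not suffice. Everything else is the standard fact that positive-weight $\BG_m$-actions on affine space have all orbits limiting to the origin.
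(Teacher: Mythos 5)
Your proposal is correct and follows essentially the same route as the paper: openness of the image (via smoothness of $\CS^o/A$, which you obtain equivalently by composing the open immersion $\CS^o\hookrightarrow\CS$ with the smooth morphism $\CS\to A$), $\BG_m$-invariance from Lemma \ref{gminv}, the origin in the image from Lemma \ref{vgono}, and the standard contracting-$\BG_m$ argument on the Hitchin base. The only difference is that you spell out the final elementary step (a $\BG_m$-invariant open subset of $A$ containing $0$ is all of $A$), which the paper leaves implicit.
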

\begin{proof}
By Corollary \ref{vgsm}, we have that $\CS^o$ is smooth over $A(C^{(1)})$. Let $U\to A(C^{(1)})$ be a smooth and surjective morphism with $U$ an $A(C^{(1)})$-scheme. Since smooth morphisms between schemes are open \cite[\href{https://stacks.math.columbia.edu/tag/056G}{Tag 056G}]{Sta22}, we have that the set theoretic image of $U$, thus of $\CS^o$, inside $A(C^{(1)})$ is open. Lemma \ref{gminv} and Lemma \ref{vgono} imply that the image is also $\BG_m$-invariant and contains the origin. Therefore we see that the image has to be the whole of $A(C^{(1)})$. 
\end{proof}

\section{Semistability}
\subsection{Reminder on the de Rham-BNR correspondence}\label{secdrbnr}\;

We recall the de Rham-BNR correspondence as in \cite[Prop. 3.15]{Groch}.
We warn the reader that this correspondence is worded differently in \textit{loc.cit.} than in Theorem \ref{drbnr} below because in this paper we employ our definition of rank in \S \ref{rkdeg}.
\begin{thm}[de Rham-BNR]\label{drbnr}
The category of flat connections of rank $r$ on $C$ is equivalent to the category of coherent torsion free sheaves of rank $p$, endowed with a $\CD$-module structure, scheme-theoretically supported on a degree $r$ spectral curve over $C^{(1)}$. 
\end{thm}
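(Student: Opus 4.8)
The plan is to realize both categories as module categories over the same sheaf of algebras, so that the equivalence becomes a tautology combined with the Azumaya splitting property of $\CD$ on spectral curves. First I would recall the classical BMR dictionary: by the identity $Fr_*D_C = \pi_*\CD$ of equation (\ref{dcd}), a flat connection $(V,\nabla)$ of rank $r$ on $C$ is the same datum as a $D_C$-module coherent over $\CO_C$, hence (pushing forward along the finite flat morphism $Fr$, which is an equivalence onto $\pi_*\CD$-modules that are coherent over $\CO_{C^{(1)}}$) the same datum as a coherent $\CD$-module on $T^*C^{(1)}$ whose pushforward to $C^{(1)}$ is locally free of rank $rp$. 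The content of the $p$-curvature story is that such a $\CD$-module is set-theoretically supported on the spectral variety cut out in $T^*C^{(1)}$ by the characteristic polynomial of the $p$-curvature; the assignment $(V,\nabla)\mapsto$ (its $p$-curvature spectral data) lands in $A(C^{(1)},r)$, and refining "set-theoretic support" to "scheme-theoretic support on a degree $r$ spectral curve $\Gamma$" is exactly the correct normalization, as in \cite[Prop. 3.15]{Groch} and \cite{BMR}.

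Next I would transfer the module structure to $\Gamma$ itself. Given a coherent $\CD$-module $\CE$ scheme-theoretically supported on $\Gamma \subset T^*C^{(1)}$, restriction gives $\CE$ the structure of a module over $\CD|_\Gamma = i^*\CD$, where $i:\Gamma\hookrightarrow T^*C^{(1)}$; conversely a $\CD|_\Gamma$-module pushes forward to a $\CD$-module on $T^*C^{(1)}$ with scheme-theoretic support in $\Gamma$. Thus the category in question is equivalent to the category of coherent $\CD|_\Gamma$-modules that are torsion free of rank $p$ as $\CO_\Gamma$-modules, as $\Gamma$ ranges over degree $r$ spectral curves. Now invoke that $\CD|_\Gamma$ is an Azumaya algebra over $\Gamma$ of rank $p^2$ which splits: indeed, this is precisely the content of the splitting stack $\CS$ of Definition \ref{defS} together with the existence results cited in Proposition \ref{pstor} (any splitting $E$ over $\Gamma$ gives $\CD|_\Gamma \cong \mathrm{End}_{\CO_\Gamma}(E)$). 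Fixing such an $E$, Morita theory for $\mathrm{End}_{\CO_\Gamma}(E)$ over the (possibly non-reduced, reducible) curve $\Gamma$ gives an equivalence between $\CD|_\Gamma$-modules and $\CO_\Gamma$-modules, via $\CF\mapsto \hom_{\CD|_\Gamma}(E,\CF)$ with inverse $M\mapsto E\otimes_{\CO_\Gamma} M$; under this equivalence a $\CD|_\Gamma$-module that is torsion free of rank $p$ over $\CO_\Gamma$ corresponds to an $\CO_\Gamma$-module that is torsion free of rank $1$ in the sense of \S\ref{rkdeg} (one checks rank and torsion-freeness at each generic point $\eta_k$ of $\Gamma$, where $\CD|_\Gamma$ becomes a matrix algebra over $\CO_{\eta_k}$ and Morita equivalence over the Artinian local ring $\CO_{\eta_k}$ divides length by $p$). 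Assembling these equivalences as $\Gamma$ varies over $A(C^{(1)},r)$ yields the asserted equivalence; one must also check it is compatible with morphisms, which is immediate since all functors involved are additive and exact on the relevant abelian subcategories.

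The main obstacle I expect is the careful handling of the torsion-free, non-locally-free case: a BNR sheaf need not be a vector bundle (as emphasized in the Remark following \S\ref{rkdeg}, e.g. $\mathfrak m_x$ on a nodal spectral curve has infinite projective dimension), so one cannot simply cite a Morita equivalence phrased for projective modules or appeal to finite locally free resolutions as in \cite[Lemma 3.25]{Groch}. The fix is to use that $\CD|_\Gamma \cong \mathrm{End}_{\CO_\Gamma}(E)$ with $E$ a genuine vector bundle of rank $p$: the Morita functors $E\otimes_{\CO_\Gamma}(-)$ and $\hom_{\CD|_\Gamma}(E,-)$ are then defined on all coherent modules, are mutually inverse (since $\hom_{\CO_\Gamma}(E,E)=\CD|_\Gamma$ and $E$ is a local generator), and are exact because $E$ is locally free; exactness is what guarantees that torsion-freeness and the rank/degree normalizations of \S\ref{rkdeg} are preserved in both directions, including at non-reduced points where one argues on the local Artinian rings $\CO_{\Gamma,\eta_k}$. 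A secondary point to be careful about is matching the de Rham rank $r$ with the spectral degree $r$: this is the normalization that the spectral curve has degree exactly $r$ (not a multiple), which follows from the $p$-curvature characteristic polynomial having degree $r$ in the tautological section $\Ft$, and this is where genus $g>1$ and the structure of $A(C^{(1)},r)$ enter only through already-cited results.
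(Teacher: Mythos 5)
Your first paragraph follows essentially the paper's own route: the correspondence is just transfer of module structures across the identification $Fr_*D_C=\pi_*\CD$ for the affine morphisms $Fr$ and $\pi$, with the genuinely nontrivial input --- that the $p$-curvature data give Fitting support the thickened curve $pY$ while the scheme-theoretic support lies in the degree $r$ curve $Y$ --- delegated to \cite[Prop. 3.15]{Groch} and \cite{BMR}, exactly as in \S\ref{secdrbnr}. Two points, however, need repair. First, you never verify that the sheaf on $Y$ attached to a flat connection is torsion free of rank $p$ in the sense of \S\ref{rkdeg}, nor, conversely, that $\pi_*$ of a torsion free rank $p$ sheaf on $Y$ is a vector bundle of rank $pr$ on $C^{(1)}$: your characterization of the image is ``$\CD$-modules whose pushforward is locally free of rank $rp$'', and reconciling that condition with the torsion-free rank $p$ condition in the statement is precisely the bookkeeping the theorem asserts. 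The paper obtains the first point from Schaub's result that BNR sheaves of Higgs bundles are torsion free of rank $1$ (\cite[Proposition 2.1]{Schaub}, applied to the rank $pr$ Higgs bundle $Fr_*G$ on $Z=pY$, which translates into rank $p$ on $Y$), and the second from torsion-freeness over the smooth curve $C^{(1)}$; your Morita computation at the generic points only compares the two conditions to rank-one conditions, it does not establish either of them for the objects at hand.

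Second, the Morita/splitting step is both unnecessary and off target. The target category in Theorem \ref{drbnr} retains the $\CD$-module structure precisely so that no splitting is chosen; passing to rank $1$ $\CO_\Gamma$-modules via a splitting $E$ of $\CD|_\Gamma$ proves a different, non-canonical statement. Moreover, splittings exist only fiberwise and \'etale-locally, and there is no coherent choice as $\Gamma$ varies ($\CS$ is a nontrivial $\CP^{st}_{Y/A}$-torsor, Lemma \ref{notriv}), so ``assembling these equivalences as $\Gamma$ varies'' does not produce a functor on the category of all rank $r$ flat connections: morphisms between connections with different spectral curves would involve unrelated splittings. If you drop that step entirely, your first reduction already yields the asserted equivalence once the torsion-free/rank verification above is supplied; the Morita equivalence itself is sound and is indeed used in the paper, but later and for a different purpose (the proof of Theorem \ref{deltass}), not in the proof of Theorem \ref{drbnr}.
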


The de Rham-BNR correspondence works as follows.
We have the following Cartesian square:
\begin{equation}
\label{4corners}
    \xymatrix{
    T^{1,*} C \ar[r]^-W \ar[d]_-{\pi^p} & T^*C^{(1)}\ar[d]^-{\pi}\\
    C\ar[r]_-{Fr} & C^{(1)}
    }
\end{equation}
and identifications compatible with the evident inclusions of $\CO_{C^{(1)}}$-algebras: 
\[
 Z(Fr_*D_C) =    \pi_*\CO_{T^*C^{(1)}}, \quad
Fr_*D_C = \pi_*\CD.
\]

Given a flat connection $G$ on $C$, $Fr_*G$ is a $Z(Fr_*D_C)=\pi_*\CO_{T^*C^{(1)}}$-module.

It follows that $Fr_*G$ is naturally endowed with a structure of a rank $pr$ Higgs bundle. 

The associated BNR sheaf $\CB(Fr_*G)$ (notation as in (\ref{bnr})) is a rank 1 torsion 
free sheaf with Fitting support a spectral curve $Z$ of degree $pr$ in $T^*C^{(1)}$
which is the $p$-th order thickening of a degree $r$ spectral curve $Y$ in $T^*C^{(1)},$ i.e. $Z=pY;$
moreover, the scheme-theoretic support of $\CB(Fr_*G)$ is indeed inside $Y.$ 

The de Rham-BNR correspondence sends $G$ to   $\CB(Fr_*G)$ on the degree $r$ spectral curve $Y.$

Conversely, let $\CG$ be a rank $p$ torsion free sheaf with a $\CD$-module structure on a degree $r$ spectral curve $Y$.

There is a natural $Fr_*D_C=\pi_*\CD$-module structure on the rank $pr$ vector bundle $\pi_*\CG$.

The inverse to the de Rham-BNR correspondence sends $\mathcal G$ to the flat connection $G$ on $C$ uniquely defined
by the property that  $Fr_*G=\pi_*\CG$ as a $Fr_*D_C=\pi_*\CD$-module.

\subsection{A Reminder on the Chen--Zhu isomorphism}\label{schzh}\;

\begin{defn}
\label{defiso}
We define a morphism of algebraic stacks over the Hitchin base $A(C^{(1)})$:
\begin{equation} \label{fcdef}
    \widetilde{\Fc}
    : \CS\times_{A(C^{(1)})} \CM_{Dol}(C^{(1)},r)\to \CM_{dR}(C,r).
\end{equation}
Let $U$ be an $A(C^{(1)})$-scheme. An object in the groupoid $(\CS\times_{A(C^{(1)})} \CM_{Dol}(C^{(1)},r))(U)$ is a pair $(E, F)$ such that $E$ is a splitting of $\CD|_{Y_U}$ and $F$ is a Higgs bundle on $C_U^{(1)}$ with spectral curve $Y_U$.
Let $\CB(F)$ be the BNR sheaf of $F$ on $Y_U$ (notation as in (\ref{bnr})). We have that $E\otimes \CB(F)$ defines a $\CD|_{Y_U}$-module. Therefore it defines a flat connection $\widetilde{\Fc}(U)(E,F)$ on $C_U$ by the de Rham-BNR Theorem \ref{drbnr}.
Given isomorphisms between splittings and Higgs bundles $(E,F)\xrightarrow{\sim}(E',F')$, we obtain an isomorphism of $\CD|_{Y_U}$-modules $E\otimes\CB(F)\xrightarrow{\sim} E'\otimes \CB(F')$, thus a morphism of flat connections $\widetilde{\Fc}(U)(E,F)\xrightarrow{\sim}\widetilde{\Fc}(U)(E',F')$. 
\end{defn}

There is an action of the Picard stack $\CP^{st}_{Y/A}$ on the source of $\widetilde{\Fc}$:
\[\mathrm{act}: \CP^{st}_{Y/A}\times_{A(C^{(1)})} \CS\times_{A(C^{(1)})} \CM_{Dol}(C^{(1)},r) \longrightarrow \CS\times_{A(C^{(1)})} \CM_{Dol}(C^{(1)},r),\]
which, given a line bundle $L_U$ on $Y_U$, and a pair $(E,F)$ as above, the action $L_U\cdot(E,F)$ is defined to be $(E\otimes L_U^{-1}, L_U\cdot F)$, where $L_U\cdot F$ is the Higgs bundle on $C_U$ whose BNR sheaf is $L_U\otimes\CB(F)$. The action on the morphisms is defined in the natural way.

Consider the quotient of $\CS\times_{A(C^{(1)})} \CM_{Dol}(C^{(1)},r)$ by this action of $\CP^{st}_{Y/A}$. A priori the quotient is a 2-stack. However, since for every pair $(E,F)$ the automorphism group of the trivial line bundle on $Y_U$ injects into the automorphism group of $(E,F)$, by \cite[Lemme 4.7]{Ngo0}, we have that the quotient 2-stack is equivalent to a stack. We denote this stack by:
\[\CS\times^{\CP^{st}_{Y/A}} \CM_{Dol}(C^{(1)},r).\]

The following diagram is commutative:
\[\begin{tikzcd}
\CP^{st}_{Y/A}\times_{A(C^{(1)})} \CS\times_{A(C^{(1)})} \CM_{Dol}(C^{(1)},r)
\ar[r,shift left=.75ex,"act"]
  \ar[r,shift right=.75ex,swap,"pr_{2,3}"]
&
\CS\times_{A(C^{(1)})} \CM_{Dol}(C^{(1)},r) \ar[r,"\widetilde{\Fc}"] 
&
\CM_{dR}(C,r),
\end{tikzcd}\]
where $pr_{2,3}$ is the projection.
The universal property of the quotient stack \cite[p.419]{Ngo0} induces a natural morphism.
\[\Fc: \CS\times^{\CP^{st}_{Y/A}} \CM_{Dol}(C^{(1)},r)\to \CM_{dR}(C,r).\]

\begin{thm}\label{chzhthm}
\cite[Theorem 1.2]{ChZh}
$\Fc$ above is an isomorphism of algebraic stacks over $A(C^{(1)})$.
\end{thm}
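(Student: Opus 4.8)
The plan is to prove that $\Fc$ is an isomorphism of stacks by showing that $\widetilde{\Fc}$ of Definition~\ref{defiso} exhibits $\CM_{dR}(C,r)$ as the quotient of $\CN:=\CS\times_{A(C^{(1)})}\CM_{Dol}(C^{(1)},r)$ by the $\CP^{st}_{Y/A}$-action $\mathrm{act}$. Since the diagram preceding the statement commutes, $\widetilde{\Fc}$ is already $\CP^{st}_{Y/A}$-invariant and descends to $\Fc$; to see that $\Fc$ is an isomorphism it then suffices to check (a) $\widetilde{\Fc}$ is an epimorphism of stacks, and (b) the morphism $\CP^{st}_{Y/A}\times_{A(C^{(1)})}\CN\to \CN\times_{\CM_{dR}(C,r)}\CN$ given by $(L,(E,F))\mapsto\big((E,F),L\cdot(E,F)\big)$ is an isomorphism. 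The $A(C^{(1)})$-linearity is automatic from the construction.

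For (a) I would work fppf-locally on an $A(C^{(1)})$-scheme $U$. By the de Rham--BNR correspondence (Theorem~\ref{drbnr}), an object $G\in\CM_{dR}(C,r)(U)$ is the same datum as a coherent $\CO_{Y_U}$-module $\CG$, torsion free of rank $p$, equipped with a $\CD|_{Y_U}$-module structure. As $\CD|_{Y_U}$ is Azumaya over $Y_U$, after an \'etale cover $U'\to U$ it acquires a splitting $\CD|_{Y_{U'}}\cong End_{\CO_{Y_{U'}}}(E)$, and Morita theory identifies $\CD|_{Y_{U'}}$-modules with $\CO_{Y_{U'}}$-modules via $\CG\mapsto\hom_{\CD|_{Y_{U'}}}(E,\CG)$ and $\CB\mapsto E\otimes\CB$; this equivalence matches torsion free rank $p$ modules with torsion free rank $1$ modules (the same input as in Definition~\ref{defle}, namely \cite[Proposition 2.3]{OV}). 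Hence $\CG|_{Y_{U'}}\cong E\otimes\CB$ with $\CB$ torsion free of rank $1$ on $Y_{U'}$, so $\CB=\CB(F)$ for a rank $r$ Higgs bundle $F$ on $C^{(1)}_{U'}$ with spectral curve $Y_{U'}$, and $\widetilde{\Fc}(U')(E,F)=E\otimes\CB(F)\cong\CG|_{Y_{U'}}$ corresponds to $G|_{U'}$. The analogous bookkeeping with isomorphisms shows $\widetilde{\Fc}$ is full on isomorphisms, so it is an epimorphism.

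For (b), suppose $(E,F)$ and $(E',F')$ lie in $\CN(U)$ with $\widetilde{\Fc}(E,F)\cong\widetilde{\Fc}(E',F')$, i.e. $E\otimes\CB(F)\cong E'\otimes\CB(F')$ as $\CD|_{Y_U}$-modules. Since $E$ and $E'$ are splittings of the same Azumaya algebra, Proposition~\ref{pstor} (the torsor property of $\CS$) provides a line bundle $L$ on $Y_U$, unique up to unique isomorphism, with $E'\cong E\otimes L^{-1}$; applying to the displayed isomorphism the Morita equivalence $\hom_{\CD|_{Y_U}}(E,-)$ attached to $E$ gives $\CB(F)\cong L^{-1}\otimes\CB(F')$, hence $\CB(F')\cong L\otimes\CB(F)=\CB(L\cdot F)$, so $F'\cong L\cdot F$ and $(E',F')\cong L\cdot(E,F)$ in the sense of Definition~\ref{defiso}, with $L$ uniquely determined. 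Unwinding, the morphism in (b) is an isomorphism, and (a)+(b) give that $\Fc$ is an isomorphism of stacks over $A(C^{(1)})$.

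The delicate point, to be treated with care, is the Morita comparison in families over the spectral curves $Y_U$, which are in general non-reduced and singular: one must verify that $\hom_{\CD|_{Y_U}}(E,-)$ and $E\otimes(-)$ are mutually inverse equivalences commuting with base change in $U$, that they respect torsion-freeness and multiply or divide generic lengths on every component by $p$, and that the only autoequivalences of $\CO_{Y_U}$-modules produced this way are line-bundle twists (which forces uniqueness of $L$) — all safely obtained \'etale-locally on $Y_U$, where $\CD$ splits, and then descended. One must also check that the $\CD|_{Y_U}$-module structure on $E\otimes\CB(F)$ is the one whose $\pi$-pushforward is the $Fr_*D_C=\pi_*\CD$-module underlying an honest flat connection, which is exactly the content of the de Rham--BNR correspondence via the identifications $Z(Fr_*D_C)=\pi_*\CO_{T^*C^{(1)}}$ and $Fr_*D_C=\pi_*\CD$.
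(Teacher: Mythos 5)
Your proposal is correct in outline, but it takes a genuinely different route from the paper. In the paper, Theorem \ref{chzhthm} is not proved: it is quoted from Chen--Zhu \cite[Theorem 1.2]{ChZh}, and the text then re-derives the same isomorphism (as Theorem \ref{chzhgr}) purely formally from Groechenig's isomorphism $\Delta$ of Theorem \ref{griso}, by quotienting both sides of $\Delta$ by the action of $\CP^{st}_{Y/A}$ and identifying $[\CS/\CP^{st}_{Y/A}]\cong A(C^{(1)})$ via \cite[Lemme 4.7]{Ngo0}. You instead reprove the statement directly by verifying the quotient criterion: $\widetilde{\Fc}$ is an epimorphism (\'etale-local existence of splittings from the torsor property of $\CS$, then Morita theory combined with the de Rham--BNR correspondence), and the action map $\CP^{st}_{Y/A}\times_{A(C^{(1)})}\CN\to\CN\times_{\CM_{dR}(C,r)}\CN$ is an isomorphism (two splittings differ by a line-bundle twist, and applying $\hom_{\CD}(E,-)$ transports the comparison to the BNR sheaves); this is essentially Chen--Zhu's original argument. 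The paper's route buys that all the hard local analysis (Morita equivalence in flat families over possibly non-reduced spectral curves, base-change compatibility, preservation of torsion-freeness and rank) is absorbed into the cited results \cite[Theorem 3.29]{Groch} and Theorem \ref{drbnr}, leaving only a formal quotient argument; your route buys a self-contained and transparent proof at the cost of having to carry out exactly those verifications, which you rightly isolate as the delicate points. Two refinements you should make explicit: since $\CP^{st}_{Y/A}$ has inertia $\BG_m$, the criterion ``epimorphism plus action map an isomorphism'' must be applied to the rigidified quotient, i.e.\ you should invoke the fact (established in the paper just before the theorem via \cite[Lemme 4.7]{Ngo0}) that the quotient 2-stack is a 1-stack and check full faithfulness of your map to the fiber product on automorphisms, not only on objects; and in step (a) the splitting is needed over the whole relative curve $Y_{U'}$, not merely \'etale-locally on $Y_{U'}$, which holds because $\CS\to A(C^{(1)})$ is a smooth surjective torsor under the smooth Picard stack, but deserves a sentence of justification.
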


Below in Theorem \ref{chzhgr} we point out that an isomorphism introduced in the second author's work  (see Theorem \ref{griso} below) gives rise to Chen--Zhu's isomorphism. First recall that the isomorphism in question is the following:
\begin{thm}\label{griso}
\cite[Theorem 3.29]{Groch}
There exists an isomorphism of algebraic stacks
\begin{equation}\label{Delta}
\Delta: \CS\times_{A(C^{(1)})}\CM_{Dol}(C^{(1)},r)\to \CS\times_{A(C^{(1)})} \CM_{dR}(C,r),
\end{equation}
where $\Delta(E,F)=(E,\widetilde{\Fc}(E,F))$.
\end{thm}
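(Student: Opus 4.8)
The plan is to prove $\Delta$ is an isomorphism by exhibiting an explicit quasi-inverse $\Psi$ on groupoids of $T$-points, the engine being the Morita equivalence attached to a splitting together with the BNR and de Rham-BNR correspondences. First I would record that $\Delta$ is a morphism of algebraic stacks over $A(C^{(1)})$, in fact over $\CS$: its two components are $\mathrm{pr}_{\CS}$ and $\widetilde{\Fc}$, both morphisms over $A(C^{(1)})$ by Definition~\ref{defiso} (the de Rham-Hitchin image of $\widetilde{\Fc}(E,F)$ is, by construction, the degree $r$ spectral curve $Y_U$ that supports $E\otimes\CB(F)$), so $\Delta=(\mathrm{pr}_{\CS},\widetilde{\Fc})$ is legitimate by the universal property of the fiber product. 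Since source and target are algebraic, it then suffices to produce a quasi-inverse $\Psi$ on $T$-points, functorially in the scheme $T$. For $T\to A(C^{(1)})$ with spectral curve $Y_T$, a $T$-point of the source is a pair $(E,F)$ with $E$ a splitting of $\CD|_{Y_T}$ and $F$ a Higgs bundle on $C_T^{(1)}$ with spectral curve $Y_T$, while a $T$-point of the target is $(E,G)$ with $E$ as before and $G$ a flat connection on $C_T$ whose de Rham-Hitchin image is $Y_T$.

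Next I would bring in the Morita equivalence. Because $E$ is a rank $p$ vector bundle on $Y_T$ with $\hom_{\CO_{Y_T}}(E,E)\cong\CD|_{Y_T}$, the functors $N\mapsto E\otimes_{\CO_{Y_T}}N$ and $M\mapsto\hom_{\CD|_{Y_T}}(E,M)$ are mutually quasi-inverse equivalences between $\CO_{Y_T}$-modules and $\CD|_{Y_T}$-modules; Zariski-locally on $Y_T$ this is the classical Morita equivalence for a matrix algebra, hence exact, and---using that $E$ is locally free of rank $p$---compatible with base change along $T'\to T$, preserving coherence, $T$-flatness, torsion-freeness and scheme-theoretic support while multiplying generic ranks along $Y_T$ by $p$. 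With this in hand I would set $\Psi(E,G):=(E,F_G)$, where $F_G:=\pi_{C_T^{(1)},*}\big(\hom_{\CD|_{Y_T}}(E,\CG)\big)$ and $\CG$ is the rank $p$ torsion-free $\CD|_{Y_T}$-module attached to $G$ by the family version of the de Rham-BNR correspondence (Theorem~\ref{drbnr}). The Morita properties force $\hom_{\CD|_{Y_T}}(E,\CG)$ to be torsion-free of rank $1$ and $T$-flat, so by BNR ($\pi$ being affine) $F_G$ is a Higgs bundle on $C_T^{(1)}$ with spectral curve $Y_T$ and $\CB(F_G)=\hom_{\CD|_{Y_T}}(E,\CG)$; as de Rham-BNR, the Morita functor and $\pi_{*}$ are functorial and commute with base change, $\Psi$ is a morphism of stacks over $A(C^{(1)})$.

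Finally I would check that $\Psi$ and $\Delta$ are mutually inverse, naturally in $T$. Starting from $(E,F)$: by Definition~\ref{defiso} the $\CD|_{Y_T}$-module of $\widetilde{\Fc}(E,F)$ is $E\otimes_{\CO_{Y_T}}\CB(F)$, so $\Psi\big(E,\widetilde{\Fc}(E,F)\big)=\big(E,\ \pi_{*}\hom_{\CD|_{Y_T}}(E,\,E\otimes_{\CO_{Y_T}}\CB(F))\big)\cong\big(E,\ \pi_{*}\CB(F)\big)\cong(E,F)$, using the Morita identity $\hom_{\CD|_{Y_T}}(E,E\otimes_{\CO_{Y_T}}N)\cong N$ and BNR. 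Starting from $(E,G)$ with attached module $\CG$: $\Psi(E,G)=(E,F_G)$ with $\CB(F_G)=\hom_{\CD|_{Y_T}}(E,\CG)$, so $\widetilde{\Fc}(E,F_G)$ is the connection whose $\CD|_{Y_T}$-module is $E\otimes_{\CO_{Y_T}}\hom_{\CD|_{Y_T}}(E,\CG)\cong\CG$, hence equals $G$ by Theorem~\ref{drbnr}. These isomorphisms being natural in the data and in $T$, they assemble into $2$-isomorphisms $\Psi\circ\Delta\simeq\mathrm{id}$ and $\Delta\circ\Psi\simeq\mathrm{id}$, so $\Delta$ is an isomorphism of algebraic stacks over $A(C^{(1)})$. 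I expect the main obstacle to be the Morita equivalence at the level of coherent---not necessarily locally free---sheaves: one must genuinely use that $E$ is a rank $p$ vector bundle splitting so that $\hom_{\CD|_{Y_T}}(E,-)$ stays exact, commutes with base change, and sends torsion-free rank $p$ $\CD|_{Y_T}$-modules to torsion-free rank $1$ $\CO_{Y_T}$-modules with $T$-flatness intact, and to set all of this up uniformly enough in families that it really defines a morphism of stacks.
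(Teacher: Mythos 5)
The paper does not prove this statement itself: it is quoted verbatim from \cite[Theorem 3.29]{Groch}, and the only place the underlying mechanism appears in this paper is the Morita equivalence (\ref{morita}) used in the proof of Theorem \ref{deltass}. Your construction of the quasi-inverse $\Psi(E,G)=(E,\pi_*\hom_{\CD|_{Y_T}}(E,\CG))$ via Morita, BNR and de Rham--BNR is exactly that standard route (and the one taken in the cited reference), and the family-theoretic points you flag --- exactness, $T$-flatness, base change, and preservation of torsion-freeness and rank under $\hom_{\CD|_{Y_T}}(E,-)$, all of which follow from $E$ being locally free of rank $p$ --- are indeed the only substantive details to verify, so your argument is correct and essentially the same as the source's.
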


Given $(E,F)\in (\CS\times_{A(C^{(1)})}\CM_{Dol}(C^{(1)},r))(U)$, and $L\in \CP^{st}_{Y/A}(U)$, we have that 
\[\Delta(L\cdot(E,F))=\Delta(E\otimes L^{-1}, \widetilde{\Fc}(E\otimes L^{-1}, L\cdot F))=(E\otimes L^{-1}, \widetilde{\Fc}(E,F)).\]

Let $[\CS/\CP^{st}_{Y/A}]$ be the quotient 2-stack. By \cite[Lemme 2.7]{Ngo0} again, $[\CS/\CP^{st}_{Y/A}]$ is indeed a stack. Since for every object $E$ of $\CS(U)$, the set function $Aut(\CO_{Y_U})\cong Aut(E)$ is bijective, going through the proof of \textit{loc.cit.} we see that the $A(C^{(1)})$-stack $[\CS/\CP^{st}_{Y/A}]$ is indeed a sheaf which is furthermore isomorphic to $A(C^{(1)})$ itself.
Therefore, taking quotient by the Picard stack $\CP^{st}_{Y/A}$ of the isomorphism of Theorem \ref{griso}, and using that $[\CS/\CP^{st}_{Y/A}]\cong A(C^{(1)})$, we obtain Chen--Zhu's isomorphism:
\begin{thm}[Chen--Zhu]\label{chzhgr}
There exists an isomorphism of algebraic stacks over $A(C^{(1)}):$
\begin{equation}
\label{c0}
    \CS\times^{\CP^{st}_{Y/A}} \CM_{Dol}(C^{(1)},r)\cong[\CS/\CP^{st}_{Y/A}]\times_{A(C^{(1)})} \CM_{dR}(C,r)\cong \CM_{dR}(C,r).
\end{equation}
\end{thm}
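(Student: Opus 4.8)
The plan is to deduce the statement from the $\CP^{st}_{Y/A}$-equivariant isomorphism $\Delta$ of Theorem \ref{griso} by passing to quotient stacks, exactly as sketched in the paragraph preceding the theorem. First I would equip the target $\CS\times_{A(C^{(1)})}\CM_{dR}(C,r)$ with the $\CP^{st}_{Y/A}$-action that twists only the splitting factor, $L\cdot(E,\nabla):=(E\otimes L^{-1},\nabla)$, noting that the projection to $\CS$ is equivariant and that the $\CM_{dR}(C,r)$-factor is untouched. Taking quotients then formally yields $\CS\times^{\CP^{st}_{Y/A}}\CM_{dR}(C,r)\cong[\CS/\CP^{st}_{Y/A}]\times_{A(C^{(1)})}\CM_{dR}(C,r)$. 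So it suffices to descend $\Delta$ to the quotients and to identify $[\CS/\CP^{st}_{Y/A}]$ with $A(C^{(1)})$.

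Next I would verify that $\Delta$ intertwines the two actions; this is the computation displayed just above the theorem. For $(E,F)$ an object of $(\CS\times_{A(C^{(1)})}\CM_{Dol})(U)$ and $L\in\CP^{st}_{Y/A}(U)$ one has $\Delta(L\cdot(E,F))=\big(E\otimes L^{-1},\widetilde{\Fc}(E\otimes L^{-1},L\cdot F)\big)$, and the key point is $\widetilde{\Fc}(E\otimes L^{-1},L\cdot F)=\widetilde{\Fc}(E,F)$ canonically, because the $\CD|_{Y_U}$-module attached to the pair is $(E\otimes L^{-1})\otimes\CB(L\cdot F)=(E\otimes L^{-1})\otimes(L\otimes\CB(F))\cong E\otimes\CB(F)$; the same bookkeeping on morphisms shows $\Delta$ is a morphism of stacks-with-$\CP^{st}_{Y/A}$-action, hence $\Delta(L\cdot(E,F))=L\cdot\Delta(E,F)$.

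Then I would pass to quotients. The relevant $2$-stack quotients $[\CS/\CP^{st}_{Y/A}]$, $\CS\times^{\CP^{st}_{Y/A}}\CM_{Dol}(C^{(1)},r)$ and $\CS\times^{\CP^{st}_{Y/A}}\CM_{dR}(C,r)$ are honest stacks by \cite[Lemme 2.7, Lemme 4.7]{Ngo0}, using that for every object $E$ of $\CS(U)$ the natural map $\mathrm{Aut}(\CO_{Y_U})\to\mathrm{Aut}(E)$ is a bijection (Remark \ref{gerbenot}: $\CS(U)\simeq\CP^{st}_{Y/A}(U)$) and that for pairs $(E,F)$ the automorphisms of the trivial line bundle on $Y_U$ inject into $\mathrm{Aut}(E,F)$. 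An equivariant isomorphism descends to an isomorphism of quotients, so $\Delta$ induces $\CS\times^{\CP^{st}_{Y/A}}\CM_{Dol}(C^{(1)},r)\xrightarrow{\sim}[\CS/\CP^{st}_{Y/A}]\times_{A(C^{(1)})}\CM_{dR}(C,r)$, compatibly with the maps to $A(C^{(1)})$. Finally, since $\CS$ is a $\CP^{st}_{Y/A}$-torsor (Proposition \ref{pstor}) and the bijection $\mathrm{Aut}(\CO_{Y_U})\cong\mathrm{Aut}(E)$ removes the gerbe contribution, running the argument of \cite[Lemme 2.7]{Ngo0} shows $[\CS/\CP^{st}_{Y/A}]$ is a sheaf, and as the quotient of a torsor it is $A(C^{(1)})$ itself; substituting gives $\CS\times^{\CP^{st}_{Y/A}}\CM_{Dol}(C^{(1)},r)\cong A(C^{(1)})\times_{A(C^{(1)})}\CM_{dR}(C,r)=\CM_{dR}(C,r)$.

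I expect the main obstacle to be the $2$-categorical bookkeeping: making precise the operation of quotienting a morphism of stacks-equipped-with-group-action, checking it is well defined and an isomorphism, and verifying that the automorphism-group hypotheses needed to apply \cite{Ngo0} hold uniformly, so that all three quotients are stacks and in fact $[\CS/\CP^{st}_{Y/A}]$ is a sheaf. One must also keep the $L$ versus $L^{-1}$ conventions consistent and track the $A(C^{(1)})$-linearity throughout. By contrast, the geometric input — the equivariance of $\Delta$ — is the short tensor computation above and is essentially immediate from the construction of $\widetilde{\Fc}$ in Definition \ref{defiso}.
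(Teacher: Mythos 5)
Your proposal is correct and follows essentially the same route as the paper: exploit the $\CP^{st}_{Y/A}$-equivariance of $\Delta$ (with the target action twisting only the splitting factor, verified by the tensor computation $(E\otimes L^{-1})\otimes\CB(L\cdot F)\cong E\otimes\CB(F)$), pass to quotient stacks via the Ng\^o lemmas, and identify $[\CS/\CP^{st}_{Y/A}]\cong A(C^{(1)})$ using that $\CS$ is a $\CP^{st}_{Y/A}$-torsor with $\mathrm{Aut}(\CO_{Y_U})\cong\mathrm{Aut}(E)$. No gaps beyond the $2$-categorical bookkeeping you already flag, which is exactly what the paper handles by the same references.
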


\begin{rmk}
The above isomorphism establishes an a priori non-obvious fact: given an $A(C^{(1)})$-scheme $U$, if $\CS(U)=\emptyset$, then $\CM_{dR}(C,r)(U)=\emptyset$. Using the dR-BNR correspondence, we see that if $Y_U$ does not support a splitting of $\CD|_{Y_U}$, then it does not support a torsion free rank $p$ sheaf with a $\CD|_{Y_U}$-module structure either.
\end{rmk}

\subsection{Very good splittings preserve semistable loci}\;

Let $\CM_{Dol}^{ss}(C^{(1)},r)$ and $\CM_{dR}^{ss}(C,r)$ be the stack of  rank $r$ semistable Higgs bundles and flat connections.

The first main result of this paper is a semistable version of an isomorphism of stacks (\ref{c0}) introduced by Chen--Zhu and the second author.
This remedies \cite[Lemma 3.46]{Groch}, which is not correct as stated, see Remark \ref{mainmis}.

\begin{thm}\label{deltass}
The isomorphism of algebraic stacks $\Delta$ defined in (\ref{Delta}) restricts to an isomorphism of algebraic stacks: 
\begin{equation}
    \label{fcss}
    \Delta^{ss}: \CS^o\times_{A(C^{(1)})} \CM_{Dol}^{ss}(C^{(1)},r, d)\to \CS^o\times_{A(C^{(1)})} \CM_{dR}^{ss}(C,r, dp).
\end{equation}
\end{thm}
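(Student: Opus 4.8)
The plan is to reduce the statement to a pointwise (fiberwise) statement over geometric points of $A(C^{(1)})$, and there to compare the slope of a Higgs bundle $F$ with the slope of the flat connection $\widetilde{\Fc}(E,F)$ attached to it via a very good splitting $E$. First I would recall that semistability of an object in $\CM_{Dol}$ or $\CM_{dR}$ can be tested on geometric fibers, and that $\Delta$ is an isomorphism of stacks by Theorem \ref{griso}, so the only content is to show that $\Delta$ and $\Delta^{-1}$ carry the semistable locus of the Dolbeault factor into the semistable locus of the de Rham factor and back. Since $\Delta(E,F)=(E,\widetilde{\Fc}(E,F))$ and $E$ is carried along untouched, it suffices to show: for a geometric point $b\in A(C^{(1)})$, a splitting $E$ of $\CD|_{Y_b}$ which is very good, and a Higgs bundle $F$ of rank $r$ and degree $d$ with spectral curve $Y_b$, the associated flat connection $G:=\widetilde{\Fc}(E,F)$ has rank $r$ and degree $dp$, and moreover $F$ is semistable if and only if $G$ is semistable.

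The degree computation is the heart of the matter. By the de Rham--BNR correspondence (Theorem \ref{drbnr}) and Definition \ref{defiso}, $G$ is the flat connection on $C$ with $\CB(Fr_*G)=E\otimes\CB(F)$ on $Y_b$, where $\CB(F)$ is the rank-one torsion free BNR sheaf of $F$. One computes $\deg_C(G)=\chi(G)-\mathrm{rk}(G)\chi(\CO_C)$ using $\chi(Fr_*G)=\chi(G)$ (as $Fr$ is finite) and $\chi(Fr_*G)=\chi(\pi_*(E\otimes\CB(F)))=\chi(E\otimes\CB(F))$; then via Riemann--Roch on each component $\Gamma_i$ of $Y_b$ one expresses $\chi(E\otimes\CB(F))$ in terms of $\deg_{\Gamma_i}(E)$, $\deg_{\Gamma_i}(\CB(F))$, $r_i$, $m_i$, $p$, and $g$. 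The very-good condition (\ref{degcond}), $\deg_{\Gamma_i}(E)=(1-p)\chi(\CO_{C^{(1)}})r_i=(1-p)(1-g)r_i$, is precisely what is needed for the $E$-contribution to combine with the rank-$p$-ness of $E$ and the spectral-curve Euler characteristic formula so that the total comes out to $dp+r(1-g)$, i.e. $\deg_C(G)=dp$. This is essentially the same bookkeeping already carried out in Lemma \ref{vglb} and Lemma \ref{open1}, so I would organize it as a short lemma "$\widetilde{\Fc}$ sends degree $d$ to degree $dp$ when the splitting is very good" and then quote it.

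Given the degree statement, the slope-preservation for \emph{subobjects} follows from the same mechanism applied relatively. A $\CD$-submodule of $G$ corresponds, under $\pi_*$ and the BNR dictionary, to a $\CD|_{Y_b}$-submodule of $E\otimes\CB(F)$ supported on a subcurve $\Gamma'=\sum_{i\in I}m_i'\Gamma_i$ (with $m_i'\le m_i$) of $Y_b$; tensoring by $E^{-1}$ (locally, since $E$ is a splitting, $E\otimes\CB(F)$ and $\CB(F)$ differ by the invertible twist encoded in $E$) matches these with the subobjects of $F$ coming from BNR subsheaves, hence with Higgs subbundles of $F$. Because $E$ is very good, the degree shift on each such subobject is governed by the same formula with $r_i$ replaced by the multiplicities of the subcurve, so the slope of a subobject of $G$ equals $p$ times the slope of the corresponding subobject of $F$ plus the same additive constant that relates $\mu(G)$ to $\mu(F)$; therefore the inequality $\mu(\text{sub})\le\mu(\text{whole})$ is preserved in both directions. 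Running this for the Harder--Narasimhan/Jordan--H\"older comparison gives the equivalence of semistability, and since $\Delta$ was already an isomorphism of stacks, restricting to the semistable loci yields the claimed isomorphism (\ref{fcss}).

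The main obstacle I anticipate is \emph{not} the numerology but the bijection of subobjects: one must check carefully that $\CD|_{Y_b}$-submodules of $E\otimes\CB(F)$ correspond exactly to Higgs subsheaves of $F$ of the allowed type, tracking torsion-freeness and the rank/degree conventions of \S\ref{rkdeg} on possibly non-reduced, reducible spectral curves, and that tensoring by the splitting $E$ (a locally free rank-$p$ sheaf that is only locally, not globally, of the form $\CO^{\oplus p}$) behaves well with respect to $\deg_{\Gamma_i}$ on each component. One also has to be mildly careful that "semistable" on the de Rham side is tested with the $\CD$-module structure (so the relevant subobjects are the $\nabla$-invariant ones), which is exactly what the $\pi_*$-correspondence produces, so this should go through, but it is the step requiring the most care.
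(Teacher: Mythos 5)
Your outline coincides with the paper's proof: reduce to geometric points of $A(C^{(1)})$, set up a bijection between Higgs subbundles $F'\subset F$ and flat subconnections $G'\subset G=\widetilde{\Fc}(E,F)$, and show that very-goodness forces $\deg(G')=p\deg(F')$ for every subobject (in fact exactly, with no additive constant, so slopes are simply rescaled by $p$). So the strategy is the right one; the issues are in the two steps you leave as ``bookkeeping'' and ``the step requiring the most care'', which is precisely where the paper's proof has its content.

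First, the subobject bijection: ``tensoring by $E^{-1}$ locally'' is not a construction, since the rank-$p$ splitting $E$ has no inverse and is not globally $\CO^{\oplus p}$. The paper instead invokes the Morita equivalence between $\CO_{Y_a}$-modules and $\CD|_{Y_a}$-modules given by $-\otimes_{\CO_{Y_a}}E$ and $\hom_{\CD|_{Y_a}}(E,-)$ (\cite[Theorem 3.6.1]{Ginz}); exactness of this equivalence is what turns a $\CD|_{Y_a}$-submodule of $\CB(F)\otimes E$ into an $\CO_{Y_a}$-submodule of $\CB(F)$, hence a Higgs subbundle. Second, and more seriously, your degree computation proposes to evaluate $\chi(E\otimes\CB(F'))$ by ``Riemann--Roch on each component'' in terms of $\deg_{\Gamma_i}(E)$ and $\deg_{\Gamma_i}(\CB(F'))$. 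On a possibly non-reduced, reducible spectral curve, with $\CB(F')$ torsion free but not locally free, the additivity of degree under tensoring a rank-$p$ bundle with such a sheaf is not an off-the-shelf fact in the conventions of \S\ref{rkdeg}, and this kind of numerology is exactly what went wrong in the good-splittings claim corrected by this paper (Remark \ref{mainmis}). The paper avoids any Riemann--Roch on the singular curve: it passes to the line bundle $L_E$ on $Y^p$ (Definition \ref{defle}, Lemma \ref{vglb}), restricts it to the subcurve supporting $F'$ via Lemma \ref{lres}, uses \cite[\S9, Proposition 5]{BLR} together with very-goodness to conclude $L_{E'}\in\CP^{st,o}$ of the thickened subcurve, and then applies \cite[XIII, Theorem 4.6]{sga6}, which says that twisting an arbitrary coherent sheaf by a line bundle algebraically equivalent to zero preserves Euler characteristics; affineness of $\pi^p$, flat base change and the usual Riemann--Roch on the smooth curve $C$ then give $\deg_C(G')=p\deg_{C^{(1)}}(F')$. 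If you want your sketch to become a proof, replace the component-wise Riemann--Roch by this Euler-characteristic invariance (or supply a proof of the tensor-degree formula you need in the required generality); as written, that step is a genuine gap rather than routine bookkeeping.
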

\begin{proof}

Let $U$ be an $A(C^{(1)})$-scheme together with a very good splitting $E$ on the spectral curve $Y_U.$ Let $F$ be a degree $d$ semistable Higgs bundle on $C^{(1)}_U=C^{(1)}\times_k U$ with spectral curve $Y_U$.
Since $\Delta(E,F)=(E,\widetilde{\Fc}(E,F))$ (c.f. (\ref{Delta})), 
with $\widetilde{\Fc}$ defined in Definition \ref{defiso}, i.e., the flat connection associated with the $\CD$-module $\CB(F)\otimes E$ via the de Rham-BNR correspondence as in Theorem \ref{drbnr}, 
we are reduced to showing that the flat connection $\widetilde{\Fc}(E, F)$ 
on $C_U$ is  semistable of degree  $dp.$

Since the degree conditions on semistability and very-goodness are checked fiberwise over $U$, we may assume that $U$ is a $k'$-point $a$ of $A(C^{(1)})$, where $k'/k$ is an algebraically closed field. Replacing $k$ with $k'$, we can assume that $U\in A(C^{(1)})(k)$. Let $Y_a$ be the corresponding spectral curve over $C^{(1)}$. Let $Y_a=\sum_i m_i\Gamma_i$, where $\Gamma_i$'s are pairwise distinct integral spectral curves  of repsective degrees
$r_i,$ and the  $m_i$'s are positive integers subject to $\sum_i r_i m_i=r.$


By Definition \ref{defiso}, the flat connection $\widetilde{\Fc}(E,F)=:G$ is the $D_C$-module that corresponds to the $\CD$-module $\CB(F)\otimes E$ on $Y_a$.
Using the notation in (\ref{4corners}),
we have an equality of $Fr_*D_C=\pi_*\CD$-modules: 
\begin{equation}
\label{frg}
    Fr_*G=\pi_*\Big(\CB(F)\otimes E\Big),
\end{equation}
and an equality of $\pi_*\CO_{T^*C^{(1)}}$-modules:
\begin{equation}
\label{fpb}
    F=\pi_*\CB(F).
\end{equation}

Given a Higgs subbundle $F'$ of $F$, we obtain an $\CO_{T^*C^{(1)}}$-submodule $\CB(F')$ via (\ref{fpb}), thus a sub-connection $G'$ of $G$ via (\ref{frg}). 
Conversely, given a sub-connection $G'$ of $G$, we obtain a $\CD|_{Y_a}$-submodule $K$ of $\CB(F)\otimes E$ via (\ref{frg}). 
The Morita equivalence \cite[Theorem 3.6.1]{Ginz} establishes an equivalence of categories
\begin{equation}
\label{morita}
\xymatrix{
 \CO_{Y_a}\text{-Mod}  \ar@/^1pc/[rrr]^-{-\otimes_{\CO_{Y_a}}E}_-{\cong} &&& \CD|_{Y_a}\text{-Mod} \ar@/^1pc/[lll]^-{End_{\CD_{Y_a}}(E,-)}_-{\cong}
    }
\end{equation}
Therefore, we have an isomorphism of $\CD|_{Y_a}$-modules $K\cong K'\otimes E$ for some $\CO_{Y_a}$-module $K'$. Consider the composition
\begin{equation}
\label{inc}
     i: K'\otimes E\xrightarrow{\sim} K\hookrightarrow \CB(F)\otimes E.
\end{equation}
Note that the Morita equivalence (\ref{morita}), being an equivalence between abelian categories, is automatically exact. Therefore, the inclusion of $\CD|_{Y_a}$-modules $i$ in (\ref{inc}) is induced by an inclusion of $\CO_{Y_a}$-modules $K'\hookrightarrow \CB(F)$. Therefore we have that $\pi_*K'=:F'$ is a Higgs subbundle of $F=\pi_*\CB(F)$. 
In summary, we have shown that there is a natural correspondence between Higgs subbundles $F'\subset F$ and flat sub-connections $G'\subset G$.

In what follows, we finish the proof by showing that $\mathrm{deg}(G')=p\,\mathrm{deg}(F')$, because then slopes of subobjects are rescaled by a factor of $p$, thus semistability is preserved under $\widetilde{\Fc}(E,-)$, therefore under $\Delta(E,-)$. 

Recall that in Definition \ref{defle}, we define $L_E$ to be the unique line bundle on $Y^p_a:=Y_a\times_{C^{(1)}} C$ subject to the condition that we have an identity of $W_*\CO_{Y_a^p}$-modules $W_*L_E= E.$ 
By Lemma \ref{vglb}, we have that $L_E\in \CP^{st,o}(Y^p_a)$. 

Let $Z_a$ be the subcurve of $Y_a$ that is the spectral curve for $F'$.
Let $E':=E|_{Z_a}$. 
Let $Z_a^p:=Z_a\times_{C^{(1)}} C$. Note that $Z_a^p$ is a subcurve of $Y_a^p$.
By Lemma \ref{lres} below, we have an isomorphism of line bundles on $Z_a^p$: $L_{E'}\cong(L_E)|_{Z_a^p}$.

Since $W:Z_a^p\to Z_a$ is a homeomorphism, by \cite[\href{https://stacks.math.columbia.edu/tag/0B55}{Tag 0B55}]{Sta22}, we have the following isomorphism of coherent sheaves on $Z_a$:
\begin{equation}
    \label{wle'}
    W_*(L_{E'}\otimes W^* \CB(F'))\cong E'\otimes \CB(F').
\end{equation}
Note that we have the following isomorphisms of vector bundles on $C^{(1)}$:
\[Fr_* \pi^p_*(L_{E'}\otimes W^*\CB(F'))\cong \pi_*W_*(L_{E'}\otimes W^*\CB(F'))\cong \pi_*(E'\otimes \CB(F')) \cong Fr_*(G'),\]
where the first isomorphism follows from the commutativity of the square (\ref{4corners}); the second isomorphism is given by (\ref{wle'}); the last isomorphism follows from the definition of $G'$, see Definition \ref{defiso}.
Therefore, we have an equality of degrees: (both sheaves are locally free on $C$)
\begin{equation}
\label{1eq}
    \mathrm{deg}_C(\pi^p_*(L_{E'}\otimes W^*\CB(F')))=\mathrm{deg}_C(G').
\end{equation}

By \cite[\S9, Proposition 5]{BLR} entails that for a line bundle $L$ on a nonreduced proper curve $n\Gamma$, we have that $\mathrm{deg}_{n\Gamma}(L)=n\mathrm{deg}_{\Gamma}(L|_{\Gamma})$. 

Therefore, by the very-goodness of the splitting $E$ (and here is where the very-goodness is used), we have that $L_{E'}\cong(L_E)|_{Z_a^p}\in \CP^{st,o}(Z_a^p)$,
so that by applying \cite[XIII, Theorem 4.6.(i),(ii)]{sga6}, we have an equality of Euler characteristics:
\begin{equation}
\label{chieq}
    \chi(L_{E'}\otimes W^*\CB(F'))) =\chi(W^*\CB(F')).
\end{equation}
Since the morphism $\pi^p: Z_a^p\to C$ is affine, the equality (\ref{chieq}) gives rise to the equality:
\begin{equation}
\label{chieqp}
    \chi(\pi^p_*(L_{E'}\otimes W^*\CB(F'))) =\chi(\pi^p_*W^*\CB(F')).
\end{equation}

Riemann-Roch and (\ref{chieqp}) entail that we have an equality of degrees:
\begin{equation}
\label{degeq}
    \mathrm{deg}_C(\pi^p_*(L_{E'}\otimes W^*\CB(F'))) =\mathrm{deg}_C(\pi^p_*W^*\CB(F')).
\end{equation}

Therefore, we have the following equality of degrees:
\begin{equation}
\label{2eq}
    \mathrm{deg}_C(\pi^p_*(L_{E'}\otimes W^*\CB(F'))) =\mathrm{deg}_C(\pi^p_*W^*\CB(F'))=\mathrm{deg}_C(Fr^* \pi_*\CB(F'))
    =\mathrm{deg}_C(Fr^*F')=p\mathrm{deg}_{C^{(1)}}(F'),
\end{equation}
where the first equality is (\ref{degeq}); the second follows from flat base change; the third follows from BNR; the last follows from the fact that the Frobenius morphism is of degree $p$.

Since the left-hand sides of the two equalities (\ref{1eq}) and (\ref{2eq}) are the same, we deduce the desired:
\[\mathrm{deg}_C(G')=p\,\mathrm{deg}_{C^{(1)}}(F').\]
\end{proof}

We have used the following lemma in the proof of Theorem \ref{deltass} above. 
It will be used again in the proof of Theorem \ref{vg is s} below.
\begin{lem}
\label{lres}
Let $Y/C^{(1)}$ be a spectral curve in $T^*C^{(1)}$. 
Let $Z/C^{(1)}$ be a spectral subcurve of $Y/C^{(1)}$.
Let $Y^p:=Y\times_{C}C^{(1)}$ and let $Z^p:=Z\times_C C^{(1)}$.
Let $E$ be a splitting of $\CD|_{Y}$.
Let $E'$ be $E|_Z$.
Then, in terms of the notation in Definition \ref{defle}, we have an isomorphism of line bundles on $Z^p$:
\[L_{E'}\cong (L_E)|_{Z^p}.\]
\end{lem}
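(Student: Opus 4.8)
The plan is to unwind the definition of $L_{E'}$ from Definition \ref{defle} and match it with the restriction $(L_E)|_{Z^p}$ by tracing through the pushforward construction along $W$. Recall that $L_E$ on $Y^p$ is characterized uniquely by the identity of $W_*\CO_{Y^p}$-modules $W_*L_E = E$, where $E$ is viewed as a $W_*\CO_{Y^p}$-module via the algebra map (\ref{wbc}); similarly $L_{E'}$ on $Z^p$ is characterized by $W_*L_{E'} = E' = E|_Z$, this time using the analogous inclusion $W_*\CO_{Z^p}\hookrightarrow \CD|_Z$ obtained by restricting (\ref{wbc}) to the subcurve $Z$. So it suffices to show that the candidate line bundle $(L_E)|_{Z^p}$ on $Z^p$ satisfies the defining property of $L_{E'}$, i.e. that $W_*\big((L_E)|_{Z^p}\big) \cong E|_Z$ as $W_*\CO_{Z^p}$-modules, compatibly with the algebra structures.

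The key step is a base-change statement for the affine morphism $W$ along the closed immersion $Z \hookrightarrow Y$ (equivalently $Z^p \hookrightarrow Y^p$). Consider the Cartesian square with $W: Y^p \to Y$, $W': Z^p \to Z$, and the closed immersions $\iota: Z \hookrightarrow Y$, $\iota^p: Z^p \hookrightarrow Y^p$. Since $W$ is affine (indeed finite flat of degree $p$, being a base change of the relative Frobenius $Fr$) and $\iota$ is a closed immersion, there is a canonical base-change isomorphism $\iota^* W_* \mathcal{F} \cong W'_* (\iota^p)^* \mathcal{F}$ for any quasi-coherent sheaf $\mathcal{F}$ on $Y^p$; this is the affine/flat base change as in \cite[\href{https://stacks.math.columbia.edu/tag/02KG}{Tag 02KG}]{Sta22}, already invoked in (\ref{wbc}). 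Applying this with $\mathcal{F} = L_E$ gives
\[
W'_*\big((L_E)|_{Z^p}\big) = W'_*(\iota^p)^* L_E \cong \iota^* W_* L_E = \iota^* E = E|_Z = E'.
\]
One then checks that this isomorphism is compatible with the $W_*\CO_{Z^p}$-module structures: both sides carry the structure coming from restricting along $Z$ the algebra inclusion (\ref{wbc}), and the base-change isomorphism is functorial in $\mathcal{F}$, hence compatible with the algebra action (which is itself a map of the relevant sheaves). By the uniqueness clause in Definition \ref{defle} — which rests on \cite[Proposition 12.5]{GW} for the equivalence between quasi-coherent sheaves on an affine morphism's source and modules over the pushforward algebra downstairs — this forces $(L_E)|_{Z^p} \cong L_{E'}$.

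I expect the main obstacle to be purely bookkeeping: making sure that the algebra map $W_*\CO_{Z^p} \hookrightarrow \CD|_Z$ used to define $L_{E'}$ is literally the restriction of the algebra map $W_*\CO_{Y^p}\hookrightarrow \CD|_Y$ used to define $L_E$, so that the base-change isomorphism above is one of $W_*\CO_{Z^p}$-modules and not merely of $\CO_Z$-modules. This should follow from the naturality of the construction in \cite[Lemma 2.1.1]{BMR} and \cite[Proposition 2.3]{OV} with respect to closed immersions of spectral curves, together with the compatibility of cohomology-and-base-change isomorphisms; but it requires care because $L_E$ is only defined up to unique isomorphism via a module-theoretic characterization, so the argument must be phrased entirely in terms of the defining universal property rather than any explicit formula. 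Once that compatibility is in place, the rest is the formal chain of isomorphisms displayed above.
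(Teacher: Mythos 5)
Your proposal is correct and follows essentially the same route as the paper's proof: restrict the defining identity $W_*L_E=E$ to $Z$, invoke affine base change (the same Stacks Project Tag 02KG) to identify $(W_*\CO_{Y^p})|_Z\cong W_*\CO_{Z^p}$ and $(W_*L_E)|_Z\cong W_*\big((L_E)|_{Z^p}\big)$ compatibly with the module structures, and conclude by the uniqueness clause of Definition \ref{defle}. The extra bookkeeping you flag about the algebra map being the restriction of (\ref{wbc}) is exactly what the paper handles by noting the base-change identification is one of $\CO$-algebras, so nothing further is needed.
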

\begin{proof}
We have an identity of $(W_*\CO_{Y^p})|_{Z}$-modules 
\begin{equation}
    \label{wle}
    \Big(W_*(L_{E})\Big)|_{Z}= E'.
\end{equation}
Since $W$ is affine, by base change and cohomology \cite[\href{https://stacks.math.columbia.edu/tag/02KG}{Tag 02KG}]{Sta22}, there is a natural identification of $\CO_{Y}$-algebras $(W_*\CO_{Y^p})|_{Z}\cong W_*\CO_{Z^p}$, and a natural identification of $\CO_{Y}$-modules $E'=E|_{Z}\cong W_*(L_E|_{Z^p})$. Under these natural identifications, the identity of $(W_*\CO_{Y^p})|_{Z}$-modules (\ref{wle}) becomes an isomorphism of $W_*\CO_{Z^p}$-modules:
\begin{equation}
    \label{wle2}
    W_*(L_E|_{Z^p})\cong E'.
\end{equation}
In view of Definition \ref{defle}, we obtain an isomorphism $L_{E'}\cong (L_E)|_{Z^p}$ of line bundles on $Z^p$.

\end{proof}

The action of $\CP^{st}_{Y/A}$ on $\CS\times_{A(C^{(1)})} \CM_{Dol}(C^{(1)},r)$ restricts to an action of $\CP^{st,o}_{Y/A}$ on $\CS^o\times_{A(C^{(1)})} \CM_{Dol}^{ss}(C^{(1)},r)$.

\begin{thm}[Chen--Zhu \& Groechenig, Semistable]\label{sseq}
There is a canonical isomorphism of stacks over $A(C^{(1)})$:
\begin{equation}
    \label{czgss}
    \Fc^{ss}: \CS^o\times^{\CP^{st,o}_{Y/A}}\CM_{Dol}^{ss}(C^{(1)},r,d)\cong \CM_{dR}^{ss}(C,r,dp).
\end{equation}
\end{thm}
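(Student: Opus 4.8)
The plan is to obtain $\Fc^{ss}$ by descending the isomorphism $\Delta^{ss}$ of Theorem \ref{deltass} along the action of the Picard stack $\CP^{st,o}_{Y/A}$, in exactly the way the Chen--Zhu isomorphism (\ref{c0}) was obtained from Groechenig's $\Delta$ in the passage preceding Theorem \ref{chzhgr}.

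First I would record the two relevant $\CP^{st,o}_{Y/A}$-actions and check that $\Delta^{ss}$ intertwines them. On the source, the $\CP^{st}_{Y/A}$-action on $\CS\times_{A(C^{(1)})}\CM_{Dol}(C^{(1)},r)$ restricts to an action of $\CP^{st,o}_{Y/A}$ on $\CS^o\times_{A(C^{(1)})}\CM_{Dol}^{ss}(C^{(1)},r,d)$, as noted just before the statement: for $L\in\CP^{st,o}_{Y/A}(U)$ the splitting $E\otimes L^{-1}$ is again very good by Lemma \ref{poso}, and $L\cdot F$ stays semistable of degree $d$ because $L$ is fibrewise algebraically trivial, hence preserves Euler characteristics and semistability on Hitchin fibres. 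On the target $\CS^o\times_{A(C^{(1)})}\CM_{dR}^{ss}(C,r,dp)$ I would let $\CP^{st,o}_{Y/A}$ act through the first factor only, $L\cdot(E,G):=(E\otimes L^{-1},G)$, which is again well defined by Lemma \ref{poso}. The identity $\Delta(L\cdot(E,F))=(E\otimes L^{-1},\widetilde{\Fc}(E,F))$ recorded after Theorem \ref{griso}, which holds because $\CB(L\cdot F)\otimes(E\otimes L^{-1})\cong(L\otimes\CB(F))\otimes E\otimes L^{-1}\cong\CB(F)\otimes E$, then shows that $\Delta^{ss}$ is $\CP^{st,o}_{Y/A}$-equivariant for these two actions.

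Next I would pass to quotients. As in the paragraph before Theorem \ref{chzhgr}, the naive quotient 2-stacks are equivalent to stacks by \cite[Lemme 2.7, Lemme 4.7]{Ngo0}, since for every splitting $E$ the maps $Aut(\CO_{Y_U})\to Aut(E)\to Aut(E,F)$ and $Aut(\CO_{Y_U})\to Aut(E)\to Aut(E,G)$ are bijections. By definition the quotient of the source is $\CS^o\times^{\CP^{st,o}_{Y/A}}\CM_{Dol}^{ss}(C^{(1)},r,d)$. Since the action on the target is through $\CS^o$ alone, its quotient is $[\CS^o/\CP^{st,o}_{Y/A}]\times_{A(C^{(1)})}\CM_{dR}^{ss}(C,r,dp)$. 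Now $\CS^o$ is a $\CP^{st,o}_{Y/A}$-torsor by Lemma \ref{poso} and is smooth and surjective over $A(C^{(1)})$ by Corollary \ref{vgsm} and Proposition \ref{vgsurj}, so running the same argument that produced $[\CS/\CP^{st}_{Y/A}]\cong A(C^{(1)})$ before Theorem \ref{chzhgr} yields $[\CS^o/\CP^{st,o}_{Y/A}]\cong A(C^{(1)})$. Thus the target quotient is canonically $\CM_{dR}^{ss}(C,r,dp)$, and the equivariant isomorphism $\Delta^{ss}$ descends to the asserted canonical $A(C^{(1)})$-isomorphism $\Fc^{ss}$.

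The only non-formal points are that the two actions genuinely restrict to the semistable degree-$d$ and degree-$dp$ loci (this rests on Theorem \ref{deltass} for the matching of degrees, and on the preservation of semistability on Hitchin fibres by the $\CP^{st,o}_{Y/A}$-action, as in the remark before the statement) and that $[\CS^o/\CP^{st,o}_{Y/A}]\cong A(C^{(1)})$, for which the decisive input is that $\CS^o$ is an honest torsor, i.e.\ Proposition \ref{vgsurj}. Once these are in place the descent of $\Delta^{ss}$ is a formal manipulation of quotient stacks.
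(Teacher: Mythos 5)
Your proposal is correct and follows essentially the same route as the paper: it takes the quotient of both sides of $\Delta^{ss}$ from Theorem \ref{deltass} by the $\CP^{st,o}_{Y/A}$-action and reuses the argument between Theorem \ref{griso} and Theorem \ref{chzhgr} to identify $[\CS^o/\CP^{st,o}_{Y/A}]$ with $A(C^{(1)})$, yielding $\Fc^{ss}$. The paper states this in one sentence, while you additionally spell out the equivariance of $\Delta^{ss}$ and the restriction of the action to the semistable loci, which the paper leaves implicit.
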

\begin{proof}
By taking the quotient by the action of $\CP^{st,o}_{Y/A}$ on both domain and target of $\Delta^{ss}$, and using the same argument between Theorem \ref{griso} and Theorem \ref{chzhgr} above, we obtain 
\begin{equation}
    \CS^o\times^{\CP^{st,o}_{Y/A}}\CM_{Dol}^{ss}(C^{(1)},r,d)\cong [\CS^o/\CP_{Y/A}^{st,o}]\times_{A(C^{(1)})} \CM_{dR}^{ss}(C,r,dp)\cong \CM_{dR}^{ss}(C,r,dp).
\end{equation}
\end{proof}

\begin{rmk}
\label{mainmis}
In \cite[\S3.6, esp. Lemma 3.46]{Groch}, the author claims another version of Theorem \ref{deltass} above. There, instead of very good splittings $\CS^o$, the author used the stack of good splittings, $\CS^0=\CS\times_{\CP^{st}_{Y^p/A}}\CP^{st,0}_{Y^p/A}$, where $\CP^{st,0}_{Y^p/A}$ is the Picard stack of degree 0 line bundles on the spectral curves $Y^p/C$ relative to $A(C^{(1)})$. 
In other words, while in the definition of very good splittings we impose degree conditions on each component $m_i\Gamma_i$ of a fixed spectral curve $Y_a$ over $C^{(1)}$, in the definition of good splittings one just imposes a condition on the total degree on the whole curve $Y_a$. 
It was claimed that using good splittings, sub-Higgs-bundles of degree $d$ are mapped to subconnections of degree $pd$ under $\widetilde{\Fc}$ in (\ref{fcdef}), so that this claim would imply good splittings preserve semistability.

We caution the readers that the typographies of the good splittings $\CS^0$ and the very good splittings $\CS^o$ are very similar. 
However, this will not cause confusion in this paper because except in this remark, we never use the good splittings $\CS^0$ and only use the very good splittings $\CS^o.$

Theorem \ref{deltass} shows that the claim is valid if we consider very good splittings.
We now point out  that the claim made above concerning good splittings is not valid:

Let us take the rank 2 case. Assume good splittings preserve semistability. Let $Y_a$ be a spectral curve over $C^{(1)}$ with two smooth irreducible components $\Gamma_1$ and $\Gamma_2$. Let $E$ be a good splitting on $Y_a$. Let $F=F_1\oplus F_2$ be a polystable Higgs bundle with proper sub-Higgs-bundles $F_1$ and $F_2$ on $C^{(1)}$, and with BNR sheaf $\CB(F)$ a line bundle on $Y_a$. 
By assumption, the $\CD_{Y_a}$-module $\CB(F)\otimes E$ gives rise to a polystable flat connection $G=G_1\oplus G_2$ on $C$ with $\mathrm{deg}(G_i)=p\mathrm{deg}(F_i)$. 
Let $L$ be a line bundle on $Y_a$ with multidegree $(n,-n)$ for some $n\ne 0$. Since $E$ is a good splitting, we have that $E\otimes L$ is also a good splitting. 
Let $G'=G_1'\oplus G_2'$ be the flat connection corresponding to the $\CD$-module $\CB(F=F_1 \oplus F_2)\otimes L\otimes E$. 
We have that $\mathrm{deg}_{\Gamma_1}(\CB(F)\otimes E\otimes L)= \mathrm{deg}_{\Gamma_1}(\CB(F)\otimes E)+pn$.
A Riemann-Roch calculation then shows that 
\[deg(G_1')= deg(G_1)+pn=pdeg(F_1)+pn.\]



The inaccuracy in the proof in \textit{loc.cit.} is in the very last sentence ``Using one more time that the fibrewise degree function is locally constant in flat families, we deduce that $\mathrm{deg}_{Y(k)(1)_a} i^*S = (1-p)(1-h)k$ is satisfied for every geometric point of $V$ , hence also $x$, and therefore every geometric point of $U$." Here the setup is the following: $U$ and $V$ are \'etale neighborhoods of $A(C^{(1)})$ over which $\CS$ is trivialized; the image of $U$ does not contain the origin, while the image of $V$ does. The point $x$ is in the image of the fiber product $U\times_{A}V$. The point $a$ is in the image of $V$. This sentence seems to assume that every splitting over $Y_{U\times_A V}$ can be extended to one over $Y_U,$ and  this is not true. 

In short, goodness does not restrict to subcurves, whereas very goodness does.
\end{rmk}

\subsection{Very good splittings and the de Rham moduli space}\;

A splitting $E$ of $\CS^o$ on a spectral curve $Y_b$ is tautologically a torsion free rank $p$ sheaf with a $\CD$-module structure on $Y_b$.
Therefore, by the de Rham-BNR Theorem \ref{drbnr}, the splitting $E$ corresponds to a
rank $r$ flat connection on $C.$
In this way, we obtain a monomorphism of $A(C^{(1)})$-algebraic stacks $\CS^o\hookrightarrow \CM_{dR}(C)$.
Below, we show that this inclusion is actually an open immersion into the stable part of $\CM_{dR}(C)$.

Just as in Proposition \ref{picost}.(2), we set $d^o:=r(r-1)(1-g)$.
\begin{thm}
\label{vg is s}
The natural inclusion $\CS^o\hookrightarrow \CM_{dR}(C)$ factors through an open immersion:
\[\CS^o\hookrightarrow \CM_{dR}^{s}(C,r, pd^o),\]
where the latter is the moduli stack of stable flat connections of rank $r$ and degree $pd^o$.
\end{thm}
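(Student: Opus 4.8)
The plan is to show two things: first that every very good splitting, viewed as a flat connection, is stable of the correct degree; second that the resulting monomorphism $\CS^o\hookrightarrow \CM_{dR}^s(C,r,pd^o)$ is an open immersion. For the degree and stability statement I would argue fiberwise over $A(C^{(1)})$, so fix a geometric point $a$ and a very good splitting $E$ on $Y_a=\sum_i m_i\Gamma_i$. The de Rham--BNR correspondence of Theorem \ref{drbnr} sends $E$, regarded as a rank $p$ torsion free $\CD|_{Y_a}$-module, to a flat connection $G$ with $Fr_*G=\pi_*E$. The degree computation is then the analogue of the one carried out in the proof of Theorem \ref{deltass}: one has $\deg_C(G)=\deg_C(\pi^p_* L_E)$ where $L_E\in\CP^{st,o}(Y^p_a)$ by Lemma \ref{vglb}, and a Riemann--Roch bookkeeping over each $\Gamma_i^p$ (using $\chi(\CO_{C})$, the degree formula for spectral curves, and \cite[\S9, Prop.~5]{BLR} for the multidegree on $m_i\Gamma_i^p$) gives $\deg_C(G)=p\cdot d^o$ where $d^o=r(r-1)(1-g)$. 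In fact the cleanest route is to take the special case $F=\CO_{Y_a}$ (so $\CB(F)=\CO_{Y_a}$, and $F=\pi_*\CO_{Y_a}$ is the rank $r$ Higgs bundle of degree $d^o$ with spectral curve $Y_a$) in the already-proven Theorem \ref{deltass}: then $\widetilde{\Fc}(E,\pi_*\CO_{Y_a})=G$ has degree $pd^o$ directly.

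For stability I would again invoke Theorem \ref{deltass} rather than re-run the subsheaf analysis. Given a flat sub-connection $G'\subset G$, the correspondence established in the proof of Theorem \ref{deltass} (via the Morita equivalence \eqref{morita} and the identifications \eqref{frg}, \eqref{fpb}) produces a sub-Higgs-sheaf $F'\subset F=\pi_*\CO_{Y_a}$ with $\deg_C(G')=p\deg_{C^{(1)}}(F')$. Now $F=\pi_*\CO_{Y_a}$ is the pushforward of the structure sheaf of the integral-components-with-multiplicity curve $Y_a$; I claim $\pi_*\CO_{Y_a}$ is \emph{stable} of slope $d^o/r=(r-1)(1-g)$. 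This is the standard fact that the spectral-curve pushforward of $\CO$ (equivalently, the Higgs bundle $(\oplus_{i=0}^{r-1}\CO_C,\ \text{companion matrix of the characteristic polynomial})$) is stable when $Y_a$ is integral, and more generally that any sub-Higgs-bundle of it supported on a proper subcurve has strictly smaller slope because of the negative Euler-characteristic correction; I would cite the relevant statement in \cite{dCSL} or prove it by the same Riemann--Roch estimate comparing $\chi(\CO_{Z})$ for a subcurve $Z\subsetneq Y_a$ with $\chi(\CO_{Y_a})$. Since slopes of subobjects of $G$ are exactly $p$ times slopes of subobjects of $F$, strict semistability (indeed stability) of $F$ forces stability of $G$, and $\gcd$-type issues do not arise because we only need the strict inequality. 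This gives the factorization $\CS^o\hookrightarrow\CM_{dR}^s(C,r,pd^o)$.

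It remains to upgrade the monomorphism to an open immersion. By Lemma \ref{monost} the map $\CS^o\to\CM_{dR}(C)$ is representable by a monomorphism of schemes, locally of finite type, hence universally injective; combined with Proposition \ref{vgsurj}-type smoothness ($\CS^o/A$ is smooth by Corollary \ref{vgsm}) and the smoothness of $\CM_{dR}^s/A$, a monomorphism that is smooth is étale, and an étale monomorphism is an open immersion \cite[Tag 025G]{Sta22}. So the key inputs are: (i) $\CS^o\to A$ is smooth (Corollary \ref{vgsm}); (ii) $\CM_{dR}^s(C,r,pd^o)\to A$ is smooth — this is standard smoothness of the de Rham moduli stack in the stable locus, via the vanishing of the relevant obstruction $\Ext^2$/trace-free $H^2$ on a curve; (iii) the morphism $\CS^o\to\CM_{dR}^s$ is a monomorphism, from Lemma \ref{monost} applied to the fully faithful inclusion of groupoids (each object of $\CS^o(U)$ has automorphism group $\Gm$ mapping isomorphically to the automorphisms of the corresponding stable connection, so the functor is fully faithful). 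Then $\CS^o\to\CM_{dR}^s$ is a monomorphism between stacks both smooth over $A$, hence formally étale, hence étale and a monomorphism, hence an open immersion.

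\textbf{Main obstacle.} The routine degree bookkeeping is not the difficulty; the substantive point is the \emph{stability} claim, specifically verifying that $\pi_*\CO_{Y_a}$ (and with it all the $G$'s obtained from very good splittings over possibly \emph{non-integral} $Y_a$) is stable rather than merely semistable, and relatedly ensuring the subsheaf correspondence from Theorem \ref{deltass} really does land sub-Higgs-\emph{bundles} into sub-\emph{connections} with the slope-rescaling intact when $Y_a$ is reducible/nonreduced. One has to be careful that "stable" here is with respect to the slope on $C$ and that saturating a subsheaf can only increase its degree, so the strict inequality for $F$ propagates. I would handle the reducible case by the same Riemann--Roch estimate used throughout: for a proper spectral subcurve $Z=\sum_{i\in S}m_i'\Gamma_i\subsetneq Y_a$ one has $\chi(\CO_Z)<\chi(\CO_{Y_a})\cdot(\deg Z/\deg Y_a)$ type inequality coming from the intersection terms $\Gamma_i\cdot\Gamma_j>0$, which is exactly what yields strict slope inequality and hence stability of $\pi_*\CO_{Y_a}$. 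Once this is in place the open-immersion step is formal.
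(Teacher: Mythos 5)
Your first half is essentially sound, and it is a legitimate repackaging of the paper's argument: instead of computing the exact degree of every torsion-free quotient connection of $G$ (the paper's Claims $1'$ and $2$, which use Lemma \ref{lres} and the restriction $E_2\cong E|_{Y_b'}$), you feed $F=\pi_*\CO_{Y_a}$ into the subobject correspondence from the proof of Theorem \ref{deltass} and reduce stability of $G$ to stability of $\pi_*\CO_{Y_a}$. That stability is indeed true for arbitrary (reducible, non-reduced) spectral curves and is in fact already available in the paper as Proposition \ref{picost}.(2) (Chaudouard--Laumon), or by the same Riemann--Roch estimate the paper uses; you should also note explicitly that the Morita/BNR correspondence preserves ranks of subobjects, so that slopes (not just degrees) rescale by $p$. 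So the fiberwise stability and degree statement is fine, and very-goodness enters exactly where it should, through $L_E\in\CP^{st,o}$ restricting to $\CP^{st,o}$ on subcurves.

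The open-immersion step, however, has a genuine gap. Your key assertion that ``a monomorphism between stacks both smooth over $A$ is formally \'etale'' is false: the zero section $A\hookrightarrow \BA^1_A$, or any closed immersion of a smooth substack, is a monomorphism between smooth $A$-stacks, locally of finite type, yet is not \'etale and not open. What is true is that a monomorphism locally of finite type is \emph{unramified}; \'etaleness additionally requires flatness of the morphism $\CS^o\to\CM_{dR}^{s}$ itself, and this does not follow from smoothness of source and target over $A$ (nor from Corollary \ref{vgsm}). This is precisely where the paper does the real work: it verifies formal \'etaleness directly by reducing the lifting problem to artinian test rings $V\hookrightarrow V'$, over which $\CS^o_{V'}$ is a trivial $\CP^{st,o}_{Y/A,V'}$-torsor (\cite[Lemma 3.23]{Groch}), so that via the semistable isomorphism of Theorem \ref{sseq} the map is identified with the open immersion $\CP^{st,o}_{Y/A,V'}\hookrightarrow\CM^{s}_{Dol,V'}$ of Proposition \ref{picost}.(2), whence the infinitesimal lifts exist and are unique. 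Your argument could be repaired, for instance, by proving flatness of the monomorphism separately (e.g.\ an equidimensionality count for $\CS^o$ and $\CM_{dR}^{s}$ over $A(C^{(1)})$ plus miracle flatness, after which unramified and flat gives \'etale and hence an open immersion), but as written the decisive step is a non sequitur and the proposal does not close it.
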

\begin{proof}
We first show that the monomorphism $\CS^o\hookrightarrow \CM_{dR}(C)$ factors through the open immersion $\CM_{dR}^{s}(C,r,pd^o)\hookrightarrow\CM_{dR}(C)$.
Given an $A(C^{(1)})$-scheme $U$ and a $U$-family of flat connections $(G,\nabla)$, to check that $(G,\nabla)$ is stable with a particular degree, we just need to check it over each geometric point of $U$. 
Therefore, we can check the factorization $\CS^o\hookrightarrow \CM_{dR}^{s}(C,r,pd^o)\hookrightarrow \CM_{dR}(C)$ over geometric points of $A(C^{(1)})$.

We fix $b\in A(C^{(1)})(k')$ for some algebraically closed field $k'/k$.
Replacing $k$ with $k'$, we can assume that $k'=k$.
The corresponding spectral curve $Y_b$ can be written as $\sum_i m_i\Gamma_i,$ where $\Gamma_i$'s are pairwise distinct integral spectral curves of degree $r_i$ over $C^{(1)}$.
Let $E$ be a splitting of $\CD|_{Y_b}$.
Let $G$ be the flat connection on $C$ that corresponds to $E$ on $Y_b$
under the de Rham-BNR correspondence as in Theorem \ref{drbnr}.
We first show that:

\textbf{Claim 1:} $G$ is stable of degree $pd^o$.

Let $G_1$ be a subconnection of $G.$ 
The quotient sheaf $G_2:=G/G_1$ inherits the flat connection on $G$.
Recall that stability can be checked on locally free quotients of positive rank.
To show Claim 1, it suffices to show:

\textbf{Claim $\mathbf{1'}$:} for every torsion free quotient connection $G_2$ of $G$ with $0 < \mathrm{rank}(G_2)=r'\le r,$ we have that $\mathrm{deg}_C(G_2)= r'(r'-1)p(1-g)$.

Indeed, if the degrees are of the form given above, then for any quotient $G_2$ with rank $r'<r$, we have that $\mu(G_2)=(r'-1)p(1-g)>(r-1)p(1-g)=\mu(G),$ and the desired stability of $G$ follows.

Proof of Claim $1'$.
Let $E_2$ be the dR-BNR sheaves of $G_2$ as in Theorem \ref{drbnr}. In particular, $E_2$ is a torsion free rank $p$ sheaf on a degree $r'$ spectral curve $Y_b'$.
By the description of the dR-BNR correspondence as in \S\ref{secdrbnr}, we have that the quotient of flat connections $G\twoheadrightarrow G_2$ corresponds to a quotient of $\CD$-modules $E\twoheadrightarrow E_2$.
As a consequence, we have that $Y_b'$ is a subcurve of $Y_b$, i.e., we have that $Y_b'=\sum_i m_i'\Gamma_i$, with $0\le m_i'\le m_i$ for all $i$.

Before continuing the proof of Claim $1'$, we prove:

\textbf{Claim 2:} $E_2\cong E|_{Y_b'}$.

Indeed, since $E_2$ has support on $Y_b'$, the quotient morphism of $\CO_{T^*C^{(1)}}$-modules $E\twoheadrightarrow E_2$ factors into the morpshisms of $\CO_{T^*C^{(1)}}$-modules $E\twoheadrightarrow E|_{Y_b'}\twoheadrightarrow E_2.$
Let $K:=Ker(E|_{Y_b'}\twoheadrightarrow E_2)$.
Let $\eta_j$ be the generic point of $\Gamma_j.$
Since $E$ is locally free of rank $p$, we have that $l_{\CO_{Y_b',\eta}}(E_{\eta_k})=m_i'p$.
On the other hand, in the above paragraph we have seen that $E_2$ is torsion free of rank $p$ on $Y_b'$. In view of our definition of rank in \S\ref{rkdeg}, and using that
length is additive \cite[\href{https://stacks.math.columbia.edu/tag/00IV}{Tag 00IV}]{Sta22}, we have that $K_{\eta_j}=0$ for each $j$.
Since $E_2$ is locally free, we have that $K=0$. Therefore the quotient morphism $E|_{Y_b'}\twoheadrightarrow E_2$ is indeed an isomorphism $E|_{Y_b'}\cong E_2$.
We have thus proved \textbf{Claim 2} above.

We now continue to show Claim $1'$. Recall that in Definition \ref{defle}, and we are using the notation therein, we define a line bundle $L_E$ on $Y_b^p=\sum_i m_i\Gamma_i^p\subset T^{1,*}C$ such that there is an equality of $W_*\CO_{T^{1,*}C}$-modules $E=W_*L_E$. 
Corollary \ref{vglb} shows that $L_E\in \CP^{st,o}(Y_b^p)$.
By \cite[\S9, Proposition 5]{BMR}, we have that the line bundle $(L_E)|_{\sum_i m_i'\Gamma_i^p}$ is in $\CP^{st,o}(\sum_i m_i'\Gamma_i^p)$.
By Lemma \ref{lres} above, we have an isomorphism of line bundles $(L_E)|_{\sum_i m_i'\Gamma_i^p}\cong L_{E_2}$. 
Therefore, we have that $L_{E_2}\in \CP^{st,o}(\sum_i m_i'\Gamma_i^p)$.
Note that we have an isomorphism of $\pi_*W_*\CO_{T^{*,1}C}=Fr_*\pi^p_*\mathcal{O}_{T^{1,*}C}$-modules (notation as in (\ref{4corners})):
\[Fr_*\pi_*^p L_{E_2}\cong \pi_* W_*L_{E_2}= \pi_* E_2= Fr_* G_2.\]
In view of the natural inclusion $\mathcal{O}_C\to \pi_*^p\CO_{T^{1,*}C}$, we have that the above isomorphisms are also isomorphisms of $Fr_*\CO_C$-modules.
Taking the first and last terms, we obtain an isomorphism $\pi^p_* L_{E_2}\cong G_2$ of vector bundles on $C$. Therefore, we can calculate the degree of $G_2$ using Riemann-Roch:
\begin{equation}
\label{xlh}
    \chi(L_{E_2})=0+\chi(\CO_{\sum_i m_i'\Gamma_i^p})=deg(G_2)+r'\chi(\CO_C).
\end{equation}
Note that we have $\chi(\CO_{\sum_i m_i'\Gamma_i^p})=r'\chi(\CO_C)+r'(r'-1)p(1-g)$. Plug this value into (\ref{xlh}), we immediately obtain the degree of $G_2$ as in Claim $1'$.

Therefore, \textbf{Claim }$\mathbf{1'}$, thus \textbf{Claim 1}, is proved.
Claim 1 implies that we have a monomorphism of algebraic stacks $\CS^o\hookrightarrow \CM_{dR}^{s}(C,r,pd^o)$.

To finish the proof, it remains to show that the monomorphism is indeed an open immersion. The basic idea is to reduce to the fact that $\CP^{st,o}_{Y/A}$ is an open algebraic substack of $M^s_{Dol}(C^{(1)})$. 

Let $T$ be a $\CM_{dR}$-scheme.
Let $T^o$ be the fiber product $T\times_{\CM_{dR}}\CS^o$.
The goal is to show that the natural morphism $T^o\hookrightarrow T$ is an open immersion of schemes.

By Lemma \ref{monost}, we have that $T^o\to T$ is a universally injective morphism of schemes.
Since a universally injective \'etale morphism of schemes is an open immersion \cite[\href{https://stacks.math.columbia.edu/tag/02LC}{Tag 02LC}]{Sta22}, we are left with showing that $T^o\hookrightarrow T$ is \'etale. 
By \cite[\href{https://stacks.math.columbia.edu/tag/02HM}{Tag 02HM}]{Sta22}, we need to show that $T^o\hookrightarrow T$ is locally of finite presentation and formally \'etale. 
Since both $\CS^o$ and $\CM_{dR}^s(r(r-1)p(1-g))$ are of finite presentation over $A(C^{(1)})$, by \cite[\href{https://stacks.math.columbia.edu/tag/06Q6}{Tag 06Q6}]{Sta22}, we have that $\CS^o\hookrightarrow \CM_{dR}$ is of finite presentation. Therefore the base change $T^o\hookrightarrow T$ is also of finite presentation \cite[\href{https://stacks.math.columbia.edu/tag/06Q4}{Tag 06Q4}]{Sta22}.
We are reduced to showing that $T^o\to T$ is formally \'etale. 

Let $V$ be an affine $T$-scheme and let $V\hookrightarrow V'$ be a first-order thickening of affine $T$-schemes.
Given the solid arrows as below, we need to show that there exists a unique dotted diagonal arrow $\alpha$ making the diagram commutative:
\begin{equation}
\label{vts}
    \xymatrix{
    V\ar[r] \ar[d] & T^o \ar[r] \ar[d] & \CS^o\ar[d]\\
    V' \ar[r] \ar@{-->}[ur]^-{\alpha} \ar@{-->}[urr]_-(.76){\beta} & T \ar[r] & \CM_{dR}^s(r(r-1)p(1-g)).
    }
\end{equation}
Since the right square is Cartesian, we only need to show that there exists a unique dotted arrow $\beta$ making the above diagram commutative. 

By Noetherian approximation, we can assume that $T$ is Noetherian. 
Therefore, by \cite[\href{https://stacks.math.columbia.edu/tag/02HT}{Tag 02HT}]{Sta22}, 
we can assume that $V'$ is the spectrum of an artinian $k$-algebra. 

Note that we can now replace the Hitchin base $A(C^{(1)})$ with $V'$, i.e., let subscript $(\cdot)_{V'}$ denote the base change of an $A(C^{(1)})$-scheme along the composition $V'\to \CM_{dR}^s\to A(C^{(1)})$. We need to find the dashed arrow $\beta_{V'}$ in the following commutative diagram:
\begin{equation}
    \label{}
    \xymatrix{
    V \ar[r] \ar[d] & \CS^o_{V'} \ar[d]\\
    V'\ar[r] \ar@{-->}[ur]^-{\beta_{V'}} & \CM_{dR,V'}^s.
    }
\end{equation}
However, by \cite[Lemma 3.23]{Groch}, we have that $\CS^o_{V'}$ is just the trivial $\CP^{o,st}_{Y/A,V'}$-torsor. Thus Corollary \ref{sseq} implies that we have an isomorphism of stacks $\CM_{dR,V'}^s(C, pd^o)\cong \CM_{Dol,V'}^s(C^{(1)},d^o)$, fitting into the following commutative diagram:
\begin{equation}
\label{}
    \xymatrix{
    V \ar[r] \ar[d] & \CS^o_{V'} \ar[d]  & \CP^{st,o}_{Y/A,V'}\ar[d] \ar[l]_{\sim}\\
    V'\ar[r] \ar@{-->}[ur]^-{\beta_{V'}} \ar@{-->}[urr]_-(.76){\gamma_{V'}} & \CM_{dR,V'}^s  & \CM_{Dol,V'}^s \ar[l]_{\sim}.
    }
\end{equation}
Since the right vertical arrow is an open immersion (Proposition \ref{picost}.(2)) we know that the dashed arrow $\gamma_{V'}$ above exsits. Thus the desired morphism $\beta_{V'}$ exists. 
\end{proof}

Langer's \cite[Theorem 1.1]{Langer14} gives  the existence of quasi-projective moduli spaces 
 $M_{dR}^{ss}(C, r, pd)$ (resp. $M_{dR}^{s}(C, r, pd)$) of semistable
(resp. stable) flat connections of rank $r$ and degree $pd.$ 
We state his result in the langauge of stacks.
In order to do so, we first review briefly some of Simpson's related results:
 In view of the boundedness result \cite[Theorem 0.2]{Langer04}, Langer's construction of $M_{dR}^{ss}$ coincides with Simpson's, which is a GIT quotient of the form $Q\sslash SL_N,$ see \cite[Theorem 4.7]{SimpI}. 
Simpson has an alternative description of this moduli space as a GIT quotient of the form $R_{dR}\sslash GL_r,$ where $R_{dR}$ is the de Rham representation space, see \cite[Theorem 4.10]{SimpI}.
In fact, \cite[Lemma 4.9]{SimpI} implies that the representation space $R_{dR}$, when viewed as a stack in setoids, is equivalent to the stack of flat connections with a framing at a fixed closed point of $C$.
Therefore, we have an isomorphism of stacks $\CM_{dR}^{ss}\cong [R_{dR}/GL_r]$, see also \cite[p.11,12]{Simp95}. 
Finally, by Alper's \cite[Theorem 9.1.4]{Alper-ad}, we see that the composition $\CM_{dR}^{ss}\cong [R_{dR}/GL_r]\to R\sslash GL_r\cong M_{dR}^{ss}$ is an adequate moduli space.

\begin{thm}\cite[Theorem 1.1]{Langer14}
\label{langer}
The moduli stack $\CM_{dR}^{ss}(C,r,pd)$ ($\CM_{dR}^{s}(C,r,pd),$resp.)  admits an adequate moduli space $M_{dR}^{ss}(C,r,pd)$
($M_{dR}^{s}(C,r,pd),$ resp.)
which is a quasi-projective scheme over $k.$
Furthermore, in the stable case,  the adequate moduli space is in fact tame and  a $\mathbb{G}_m$-gerbe. Moreover,  the open subscheme
$M_{dR}^{s}(C,r,pd)$  of $M_{dR}^{ss}(C,r,pd)$ is smooth over $k.$
\end{thm}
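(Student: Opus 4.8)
The plan is that the essential content of the theorem has already been assembled in the discussion preceding the statement, so the proof amounts to chaining together the cited inputs for the semistable case and then extracting the three extra features asserted in the stable case. For the semistable part I would simply record: Langer's \cite[Theorem 1.1]{Langer14} produces the quasi-projective scheme $M_{dR}^{ss}(C,r,pd)$; the boundedness of \cite[Theorem 0.2]{Langer04} identifies Langer's construction with Simpson's GIT quotient $Q\sslash SL_N$ of \cite[Theorem 4.7]{SimpI}, hence with $R_{dR}\sslash GL_r$ via \cite[Theorem 4.10]{SimpI}; \cite[Lemma 4.9]{SimpI} yields the presentation of quotient stacks $\CM_{dR}^{ss}\cong[R_{dR}/GL_r]$; and, $GL_r$ being reductive, Alper's \cite[Theorem 9.1.4]{Alper-ad} makes $[R_{dR}/GL_r]\to R_{dR}\sslash GL_r\cong M_{dR}^{ss}(C,r,pd)$ an adequate moduli space. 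Restricting to the open stable loci gives the corresponding statement for $\CM_{dR}^{s}$ and for the open subscheme $M_{dR}^{s}(C,r,pd)$ of $M_{dR}^{ss}(C,r,pd)$.

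For the stable case, the point is that a stable flat connection $(G,\nabla)$ is simple, i.e. $\mathrm{Hom}_{D_C}(G,G)=k$, so by Schur's lemma its automorphisms are exactly the scalars $\BG_m$. In the presentation $\CM_{dR}^{s}\cong[R_{dR}^{s}/GL_r]$ this means that the stabilizer of every stable point of $R_{dR}^{s}$ is precisely the center $\BG_m\subset GL_r$; since the center acts trivially on $R_{dR}$, the $GL_r$-action on $R_{dR}^{s}$ descends to a \emph{free} $PGL_r$-action, whose quotient is the scheme $M_{dR}^{s}(C,r,pd)$, so that $\CM_{dR}^{s}\cong[R_{dR}^{s}/GL_r]$ is a gerbe banded by $\BG_m$ over $M_{dR}^{s}(C,r,pd)$. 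Because $\BG_m$ is linearly reductive over any field, this gerbe is a good moduli space whose formation commutes with arbitrary base change, i.e. $M_{dR}^{s}(C,r,pd)$ is a tame moduli space in Alper's sense; this gives the first two of the three extra assertions.

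For smoothness, observe that $R_{dR}^{s}\to\CM_{dR}^{s}$ is a $GL_r$-torsor and $\CM_{dR}^{s}\to M_{dR}^{s}(C,r,pd)$ is a $\BG_m$-gerbe, both smooth and surjective, so it suffices to show that the stack $\CM_{dR}^{s}$ is smooth over $k$, i.e. that a stable flat connection has unobstructed deformations. Deformations and obstructions of $(G,\nabla)$ on the curve $C$ are controlled by $\BH^1$ and $\BH^2$ of the two-term de Rham complex $\big[\mathcal{E}nd(G)\xrightarrow{\nabla}\mathcal{E}nd(G)\otimes\omega_C\big]$ placed in degrees $0,1$; although $\BH^2$ is one-dimensional --- Serre-dual on $C$ to $\BH^0=\mathrm{Hom}_{D_C}(G,G)=k$ --- the trace map identifies it isomorphically with $\BH^2\big[\CO_C\xrightarrow{d}\omega_C\big]\cong H^1(C,\omega_C)$, under which the obstruction class of $(G,\nabla)$ becomes the obstruction class of the rank-one connection $(\det G,\det\nabla)$; the latter vanishes because the moduli of rank-one flat connections on $C$ is smooth (it is a torsor under a smooth commutative group scheme). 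Hence the obstruction map vanishes, $\CM_{dR}^{s}$ is smooth over $k$, and therefore so is $M_{dR}^{s}(C,r,pd)$.

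The only step I expect to require genuine care rather than bookkeeping is matching Langer's and Simpson's GIT constructions in positive characteristic --- which is precisely the role of the boundedness results of \cite{Langer04} --- together with the observation that ``adequate'' sharpens to ``good and tame'' exactly on the \emph{stable} locus, where all automorphism groups are the linearly reductive scalars $\BG_m$; on the full semistable stack one should not expect more than an adequate moduli space in characteristic $p$. I do not anticipate any serious obstacle beyond this.
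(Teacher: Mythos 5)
Your proposal is correct, and for the semistable/adequate part it is literally the chain the paper uses: in the paper this statement is recorded as a citation of Langer's \cite[Theorem 1.1]{Langer14}, the only argument supplied being the paragraph preceding the theorem (boundedness from \cite{Langer04}, the identification with Simpson's GIT quotients via \cite[Theorems 4.7, 4.10 and Lemma 4.9]{SimpI}, and Alper's \cite[Theorem 9.1.4]{Alper-ad} exhibiting $\CM_{dR}^{ss}\cong[R_{dR}/GL_r]\to R_{dR}\sslash GL_r$ as an adequate moduli space). Where you genuinely diverge is the stable case: the paper simply quotes Langer for ``tame, $\BG_m$-gerbe, smooth,'' whereas you reprove these by the standard arguments, which buys a self-contained treatment at the cost of two points that need more care in characteristic $p$ than your sketch acknowledges. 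First, upgrading ``stabilizers are exactly the scalars'' to ``$\CM^{s}_{dR}$ is a $\BG_m$-gerbe over a scheme which is a tame moduli space'' should not rest on ``formation commutes with base change'' (that is not the definition of tame); the verification is the one the paper itself carries out for $\CS^o\to S^o$ in Theorem \ref{vgsch} (the weight decomposition on a $\BG_m$-gerbe gives a good moduli space, and the map on topological spaces is a universal homeomorphism), and promoting the free $PGL_r$-action to a torsor/geometric quotient statement requires GIT or Keel--Mori/Alper-type input rather than Luna slices, which are unavailable in characteristic $p$. Second, in the smoothness argument, when $p\mid r$ the splitting of $\mathcal{E}nd(G)$ into scalar and traceless parts fails, so the claim that the trace map is an isomorphism on $\BH^2$ of the de Rham complex needs the duality justification (its adjoint under Serre duality is the unit $k\to \BH^0=\Hom_{D_C}(G,G)=k$, which is nonzero); likewise ``torsor under a group scheme'' does not imply smoothness in characteristic $p$ --- for the rank-one de Rham space one should instead use that it fibers over $\Pic$ with fibers torsors under $H^0(C,\omega_C)$. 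With these details supplied, your route is a legitimate alternative to simply invoking \cite{Langer14}, and is close in spirit to the arguments in \cite{Groch}.
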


By recalling that $d^o=r(r-1)(1-g)$,
Theorem \ref{vg is s} and Theorem \ref{langer}  imply that the stack of very good splittings admits a tame moduli space:

\begin{thm}
\label{vgsch}
The stack $\CS^o$ of very good splittings is a $\BG_m$ gerbe over an open subscheme $S^o$ of $M_{dR}^s(C,r,pd^o)$, fitting into a Cartesian diagram of stacks:
\begin{equation}
\label{ssq}
    \xymatrix{
    \CS^o\ar@{^{(}->}[r] \ar[d]& \CM_{dR}^s(C,r,pd^o)\ar[d]\\
    S^o \ar@{^{(}->}[r] & M_{dR}^s(C,r,pd^o).
    }
\end{equation}
We have that $S^o$ is a good and tame moduli space.

Furthermore, let $P^o$ be the group scheme representing the functor $\CP_{Y/A(C^{(1)})}^{sh,o}$. The $A(C^{(1)})$-scheme $S^o$ is a torsor under $P^o$.
In particular, we have that $S^o$ is fiberwise connected over $A(C^{(1)}).$
\end{thm}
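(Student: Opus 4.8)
The plan is to construct $S^o$ as an open subscheme of Langer's moduli scheme $M_{dR}^s(C,r,pd^o)$ and then transport structure down the $\BG_m$-gerbe $\CM_{dR}^s(C,r,pd^o)\to M_{dR}^s(C,r,pd^o)$ of Theorem \ref{langer}. First I would observe that $\CS^o$, viewed inside $\CM_{dR}(C,r)$ via Theorem \ref{vg is s}, is a \emph{full} substack closed under isomorphism: it is cut out by conditions on objects (local freeness, the identification of $\mathcal{E}nd(E)$ with the restriction of $\CD$, and the fibrewise degree condition (\ref{degcond})), all invariant under isomorphism in $\CM_{dR}$. Moreover $\CS^o$ is open in $\CM_{dR}^s(C,r,pd^o)$ by Theorem \ref{vg is s}. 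Since the gerbe morphism $g\colon\CM_{dR}^s(C,r,pd^o)\to M_{dR}^s(C,r,pd^o)$ is submersive on underlying spaces with fibres equal to single isomorphism classes, the image $S^o:=g(|\CS^o|)$ is open in $M_{dR}^s(C,r,pd^o)$ and $|\CS^o|=g^{-1}(|S^o|)$; as an open substack is determined by its underlying open set this gives the Cartesian square (\ref{ssq}), with $S^o$ a quasi-projective scheme, and $\CS^o\to S^o$ a $\BG_m$-gerbe by base change. That $S^o$ is a good and tame moduli space for $\CS^o$ then follows from Theorem \ref{langer} together with the fact that restricting a (good, tame) moduli space over an open subscheme of the base yields again a (good, tame) moduli space (Alper).

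Next I would produce the $P^o$-torsor structure by rigidifying the $\CP_{Y/A}^{st,o}$-torsor structure of Lemma \ref{poso}. Since $\CP_{Y/A}^{st,o}\to P^o$ is a $\BG_m$-gerbe (Proposition \ref{picost}.(5)) and $\CS^o\to S^o$ is a $\BG_m$-gerbe, both $P^o$ (representing $\CP^{sh,o}_{Y/A}$) and $S^o$ are the fppf sheafifications of the presheaves of isomorphism classes of objects in $\CP_{Y/A}^{st,o}$ and in $\CS^o$. The action $\CP_{Y/A}^{st,o}\times_{A}\CS^o\to\CS^o$ and the torsor equivalence $\CP_{Y/A}^{st,o}\times_{A}\CS^o\xrightarrow{\sim}\CS^o\times_{A}\CS^o$ descend to isomorphism classes, giving a $P^o$-action on $S^o$ and a morphism $P^o\times_{A}S^o\to S^o\times_{A}S^o$, $([L],[E])\mapsto([L\cdot E],[E])$; this morphism is already bijective on each test scheme (injectivity and surjectivity being exactly the defining torsor properties of $\CS^o$, vacuously true when $\CS^o$ has no sections there), hence an isomorphism of sheaves after sheafification. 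Finally $S^o\to A(C^{(1)})$ is surjective (Proposition \ref{vgsurj} plus surjectivity of $\CS^o\to S^o$) and fppf (flat and locally of finite presentation, obtained from the smooth $\CS^o\to A(C^{(1)})$ of Corollary \ref{vgsm} by fppf descent along $\CS^o\to S^o$), so $S^o$ is an fppf-locally trivial $P^o$-torsor over $A(C^{(1)})$.

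Fibrewise connectedness is then immediate: for a point $a$ of $A(C^{(1)})$ the fibre $S^o_a$ is a nonempty torsor under $P^o_a$, which is geometrically connected (being the identity component of the relative Picard scheme of the spectral curve, as recorded in the introduction); a nonempty torsor under a geometrically connected group scheme over a field becomes isomorphic to that group after a field extension, and connectedness descends along faithfully flat base change.

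The step requiring the most care is the passage between the stacks and their moduli/rigidifications, specifically checking that the band $\BG_m$ of the gerbe $\CM_{dR}^s\to M_{dR}^s$ — the scalar automorphisms of a stable flat connection — corresponds under the de Rham–BNR equivalence (Theorem \ref{drbnr}) to the scalar automorphisms of a splitting, so that the gerbe structures $\CS^o\to S^o$ and $\CP_{Y/A}^{st,o}\to P^o$ are the ones compatible with the torsor action (the $\BG_m$'s acting coherently on $E$, on $L$, and on $L\cdot E$); granted this compatibility, the descents above are formal. I expect the Cartesian square (\ref{ssq}) to be the other point demanding attention, as it relies on $\CS^o$ being a full open substack closed under isomorphism rather than an arbitrary open substack.
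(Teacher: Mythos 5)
Your proposal is correct and follows the same overall strategy as the paper: define $S^o$ via the image of $|\CS^o|$ inside $|M_{dR}^s(C,r,pd^o)|$ using Theorem \ref{vg is s} and the $\BG_m$-gerbe statement in Theorem \ref{langer}, deduce the Cartesian square and the moduli-space properties, and then descend the $\CP^{st,o}_{Y/A}$-torsor structure of Lemma \ref{poso} to a $P^o$-torsor structure on $S^o$. The differences are in the execution of two steps. For the Cartesian square and the good/tame property, the paper first verifies directly that $\CS^o\to S^o$ is a good and tame moduli space (via Lieblich's weight decomposition of quasi-coherent sheaves on a $\BG_m$-gerbe and the universal homeomorphism on topological spaces) and then proves Cartesianness by showing $\CS^o\to S^o\times_{M_{dR}^s}\CM_{dR}^s$ is a morphism of $\BG_m$-gerbes bijective on automorphism groups, hence an isomorphism by \cite[Lemma 4.6]{OV}; you instead identify $\CS^o$ with the preimage open substack through the open-substack/open-subset correspondence and obtain the gerbe and the (good, tame) moduli-space property by flat base change of Langer's tame moduli space --- this is equally valid (note that ``tame'' in the sense of \cite[Definition 7.1]{Alper}, which is how the paper reads Theorem \ref{langer}, already includes ``good''), and your band-compatibility worry is precisely the check the paper makes implicitly when it asserts bijectivity on automorphism groups (automorphisms of a very good splitting are global units on the spectral curve, i.e.\ $\BG_m$ since $g\ge 2$, matching the scalar automorphisms of the corresponding stable connection); your separate ``fullness'' concern is already subsumed in Theorem \ref{vg is s} being an open immersion. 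For the torsor structure, the paper invokes the universal property of tame moduli spaces \cite[Theorem 4.16.(vi)]{Alper} to descend the action and then concludes from the two $\BG_m$-gerbes, whereas you rigidify by hand, presenting $P^o$ and $S^o$ as fppf sheafifications of isomorphism classes and checking the torsor bijection sectionwise (which is exactly Lemma \ref{poso}) together with fppf-local triviality from flatness and surjectivity of $S^o\to A(C^{(1)})$; your route is more explicit about local triviality, the paper's is shorter but leans on Alper's machinery. Either way the argument goes through, and the fibrewise connectedness conclusion is handled identically.
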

\begin{proof}
By Theorem \ref{langer}, we have that $\CM_{dR}^s$ is a $\BG_m$-gerbe over $M_{dR}^s$.
Therefore, we have an identification of topological spaces $|\CM_{dR}^s|= |M_{dR}^s|$, see \cite[\href{https://stacks.math.columbia.edu/tag/06R9}{Tag 06R9}]{Sta22}. 
By Theorem \ref{vg is s}, we have that $\CS^o\hookrightarrow\CM_{dR}^s(C,r,pd^o)$ is an open immersion.
Therefore, we have an open immersion of topological spaces $|\CS^o|\hookrightarrow|\CM_{dR}^s|= |M_{dR}^s|$.
We define $S^o$ to be the open subscheme whose underlying topological space is the image of $|\CS^o|$ inside $|M_{dR}^s|$.

By general properties of $\BG_m$-gerbes as in \cite[Proposition 2.2.1.6, Lemma 2.1.1.13]{Lieblich}, a quasi-coherent sheaf $\CF$ on $\CS^o$ has a natural decomposition with respect to the $\BG_m$-weights $\CF=\bigoplus\CF_n,$ and taking the pushforward by $\CS^o\to S^o$ is just taking the weight zero part. 
Therefore, we have that the morphism $\CS^o\to S^o$ satisfies the two defining properties of the good moduli space morphism \cite[Definition 4.1]{Alper}.
Furthermore, by general property of gerbes \cite[\href{https://stacks.math.columbia.edu/tag/06R9}{Tag 06R9}]{Sta22}, we have that $|\CS^o|\to |S^o|$ is a universal homeomorphism. 
Therefore, $\CS^o\to S^o$ is also a tame moduli space morphism as in \cite[Definition 7.1]{Alper}.

Therefore, we see that the morphism $\CS^o\to M_{dR}^s$ factors through a surjective morphism $\CS^o\to S^o$, and that $\CS^o\to S^o$ is a $\BG_m$-gerbe.

The natural morphism $\CS^o\to S^o\times_{M_{dR}^s}\CM_{dR}^s$ is a morphism of $\BG_m$-gerbes which is bijective on automorphism groups, thus it must be an isomorphism by \cite[Lemma 4.6]{OV}. Therefore, the square (\ref{ssq}) is Cartesian.

Finally, Lemma \ref{poso} shows that $\CS^o$ is a torsor under $\CP^{st,o}_{Y/A}$. 
The universal property \cite[Theorem 4.16.(vi)]{Alper} of tame moduli spaces then induces an action of $P^o$ on $S^o$.
Since $\CP^{st,o}_{Y/A}$ (resp. $\CS^o$) is a $\BG_m$-gerbe over $P^o$ (resp. $S^o$), we see that $S^o$ is also a torsor under $P^o$.
\end{proof}

\begin{cor}
There is an isomorphism of quasi-projective $A(C^{(1)})$-schemes
\begin{equation}
    S^o\times_{A(C^{(1)})} M_{Dol}^{ss}(C^{(1)},r,d)\cong S^o\times_{A(C^{(1)})} M_{dR}^{ss}(C,r,dp).
\end{equation}
\end{cor}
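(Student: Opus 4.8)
The plan is to descend the isomorphism of algebraic stacks $\Delta^{ss}$ from Theorem \ref{deltass} to the level of adequate moduli spaces. Recall that $\Delta^{ss}$ is an $A(C^{(1)})$-isomorphism between $\CS^o\times_{A(C^{(1)})}\CM_{Dol}^{ss}(C^{(1)},r,d)$ and $\CS^o\times_{A(C^{(1)})}\CM_{dR}^{ss}(C,r,dp)$. I would use that $\CM_{dR}^{ss}(C,r,dp)\to M_{dR}^{ss}(C,r,dp)$ is an adequate moduli space morphism onto a quasi-projective $k$-scheme (Theorem \ref{langer} and the discussion preceding it), that the analogous statement holds for $\CM_{Dol}^{ss}(C^{(1)},r,d)\to M_{Dol}^{ss}(C^{(1)},r,d)$ (Simpson, Langer), and that $\CS^o\to S^o$ is a good and tame moduli space morphism, indeed a $\BG_m$-gerbe, with $S^o$ a quasi-projective open subscheme of $M_{dR}^s(C,r,pd^o)$ (Theorem \ref{vgsch}).

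The key step is to identify the adequate moduli space of each of the two fibre products occurring in $\Delta^{ss}$. I claim the natural morphism $\CS^o\times_{A(C^{(1)})}\CM_{Dol}^{ss}(C^{(1)},r,d)\to S^o\times_{A(C^{(1)})} M_{Dol}^{ss}(C^{(1)},r,d)$ is an adequate moduli space morphism, and likewise on the de Rham side. To see this I would factor it as
\[\CS^o\times_{A(C^{(1)})}\CM_{Dol}^{ss}\longrightarrow \CS^o\times_{A(C^{(1)})} M_{Dol}^{ss}\longrightarrow S^o\times_{A(C^{(1)})} M_{Dol}^{ss},\]
the first arrow being the base change of $\CM_{Dol}^{ss}\to M_{Dol}^{ss}$ along the projection to $M_{Dol}^{ss}$, the second the base change of the $\BG_m$-gerbe $\CS^o\to S^o$ along the projection to $S^o$. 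After arbitrary base change a $\BG_m$-gerbe remains a $\BG_m$-gerbe, hence remains cohomologically affine and an isomorphism on structure sheaves — its pushforward being the exact functor of extracting weight-zero parts — and an adequate moduli space morphism remains adequately affine and Stein after base change; since these properties are stable under composition, the composite exhibits $S^o\times_{A(C^{(1)})} M_{Dol}^{ss}$ as an adequate moduli space of $\CS^o\times_{A(C^{(1)})}\CM_{Dol}^{ss}$, and uniqueness of adequate moduli spaces pins it down. The same argument, using $\CM_{dR}^{ss}\to M_{dR}^{ss}$, shows $S^o\times_{A(C^{(1)})} M_{dR}^{ss}(C,r,dp)$ is the adequate moduli space of $\CS^o\times_{A(C^{(1)})}\CM_{dR}^{ss}(C,r,dp)$.

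By functoriality of adequate moduli spaces, the stack isomorphism $\Delta^{ss}$ then descends to an isomorphism $S^o\times_{A(C^{(1)})} M_{Dol}^{ss}(C^{(1)},r,d)\cong S^o\times_{A(C^{(1)})} M_{dR}^{ss}(C,r,dp)$, which is over $A(C^{(1)})$ since every morphism involved is. Both sides are schemes, being fibre products of schemes, and quasi-projective over $k$: as $A(C^{(1)})$ is an affine space and hence separated over $k$, each fibre product is a locally closed subscheme of the product over $k$ of two quasi-projective schemes. (Alternatively, one may first descend Theorem \ref{sseq} to the moduli-space isomorphism $S^o\times^{P^o} M_{Dol}^{ss}\cong M_{dR}^{ss}$ of \eqref{main iso} and then pull back along the $P^o$-torsor $S^o\to A(C^{(1)})$: the pulled-back torsor $S^o\times_{A(C^{(1)})}S^o\to S^o$ is trivial, carrying the diagonal section, so twisting it with $M_{Dol}^{ss}$ returns $S^o\times_{A(C^{(1)})} M_{Dol}^{ss}$; this route rests on the same technical point.) The main obstacle is exactly that middle step: verifying carefully that fibre products over $A(C^{(1)})$ of moduli-space morphisms are again moduli-space morphisms with the expected targets, i.e. that (cohomological, resp. adequate) affineness together with the Stein condition survive base change and composition in this mixed ``$\BG_m$-gerbe $\times$ adequate moduli space'' setting.
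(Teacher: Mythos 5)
Your strategy is the same as the paper's: the printed proof simply asserts that ``taking moduli spaces commutes with finite products'' and then replaces the stacks of Theorem \ref{deltass} by their moduli spaces, so your factorization through $\CS^o\times_{A(C^{(1)})}M_{Dol}^{ss}$ is precisely the justification the paper leaves implicit. One correction is needed, though: your blanket claim that an adequate moduli space morphism ``remains adequately affine and Stein after base change'' is false in characteristic $p$ for arbitrary base change --- formation of adequate moduli spaces commutes with \emph{flat} base change, while for a general base change one only recovers the moduli space up to an adequate (universal) homeomorphism. This does not derail your argument, because the morphism along which you base change $\CM_{Dol}^{ss}\to M_{Dol}^{ss}$ (resp. $\CM_{dR}^{ss}\to M_{dR}^{ss}$) is the projection $\CS^o\times_{A(C^{(1)})}M_{Dol}^{ss}\to M_{Dol}^{ss}$, which is smooth, hence flat, since $\CS^o\to A(C^{(1)})$ is smooth (Corollary \ref{vgsm}); and the second arrow is the pullback of the $\BG_m$-gerbe $\CS^o\to S^o$ of Theorem \ref{vgsch}, which, $\BG_m$ being linearly reductive, is a good moduli space morphism and so does base change arbitrarily. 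With the flatness observation inserted, your composition-and-uniqueness argument, the descent of $\Delta^{ss}$, and the quasi-projectivity discussion all go through as you wrote them.
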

\begin{proof}
By Theorem \ref{vgsch}, every stack in Lemma \ref{deltass} has a moduli space which is representable by a quasi-projective scheme. 
Taking moduli space commutes with finite products.
Therefore, replacing the moduli stacks in Lemma \ref{deltass} gives the corollary.
\end{proof}

Analogously to Corollary \ref{sseq}, we obtain:
\begin{thm}
\label{schois}
There is an isomorphism of quasi-projective $A(C^{(1)})$-schemes
\begin{equation}
\label{schemeiso}
    S^o\times^{P^o}M_{Dol}^{ss}(C^{(1)},r,d)\cong M_{dR}^{ss}(C,r,dp).
\end{equation}
\end{thm}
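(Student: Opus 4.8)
The plan is to descend the stack-level isomorphism $\Fc^{ss}$ of Theorem \ref{sseq} to moduli spaces by the same device used to pass from Theorem \ref{deltass} to Theorem \ref{sseq}, but carried out one level lower. The starting point is that the isomorphism $\Delta^{ss}$ of (\ref{fcss}) is equivariant for the $\CP^{st,o}_{Y/A}$-actions: a line bundle $L$ on $Y_U$ acts on $\CS^o\times_{A(C^{(1)})}\CM_{Dol}^{ss}(C^{(1)},r,d)$ by $(E,F)\mapsto(E\otimes L^{-1},L\cdot F)$ and on $\CS^o\times_{A(C^{(1)})}\CM_{dR}^{ss}(C,r,dp)$ by $(E,G)\mapsto(E\otimes L^{-1},G)$ — so the action on the de Rham factor is trivial — and $\Delta^{ss}$ intertwines the two, by the computation displayed after Theorem \ref{griso}.

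First I would pass to moduli spaces. By Theorem \ref{vgsch} the stack $\CS^o$ has good and tame moduli space $S^o$, and by Theorem \ref{langer} (and its Dolbeault counterpart) the stacks $\CM_{dR}^{ss}(C,r,dp)$ and $\CM_{Dol}^{ss}(C^{(1)},r,d)$ have adequate moduli spaces $M_{dR}^{ss}(C,r,dp)$ and $M_{Dol}^{ss}(C^{(1)},r,d)$. Since formation of adequate moduli spaces commutes with finite fibre products over $A(C^{(1)})$ — the point already used in the preceding Corollary — the isomorphism $\Delta^{ss}$ descends to an isomorphism of quasi-projective $A(C^{(1)})$-schemes
\[
\bar\Delta^{ss}\colon\ S^o\times_{A(C^{(1)})}M_{Dol}^{ss}(C^{(1)},r,d)\ \xrightarrow{\ \sim\ }\ S^o\times_{A(C^{(1)})}M_{dR}^{ss}(C,r,dp).
\]
The $\CP^{st,o}_{Y/A}$-action, being a morphism of stacks satisfying the group-action axioms, is turned by functoriality of adequate moduli spaces into a $P^o$-action on each side — this is precisely the mechanism by which $P^o$ acts on $S^o$ in Theorem \ref{vgsch} — and $\bar\Delta^{ss}$ remains equivariant; explicitly $L\in P^o$ acts by $(s,F)\mapsto(sL^{-1},L\cdot F)$ on the left and by $(s,G)\mapsto(sL^{-1},G)$ on the right.

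Next I would take the quotient by $P^o$ on both sides, imitating the passage from Theorem \ref{deltass} to Theorem \ref{sseq}. Since $S^o\to A(C^{(1)})$ is a $P^o$-torsor (Theorem \ref{vgsch}), the $P^o$-action on each of $S^o\times_{A(C^{(1)})}M_{Dol}^{ss}(C^{(1)},r,d)$ and $S^o\times_{A(C^{(1)})}M_{dR}^{ss}(C,r,dp)$ is free and the quotient morphism is an fppf $P^o$-torsor, so the quotients exist as algebraic spaces and $\bar\Delta^{ss}$ descends to an isomorphism between them. The left-hand quotient is by definition $S^o\times^{P^o}M_{Dol}^{ss}(C^{(1)},r,d)$. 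For the right-hand quotient, note that $P^o$ acts trivially on the de Rham factor and by the torsor action on $S^o$, so the second projection exhibits $S^o\times_{A(C^{(1)})}M_{dR}^{ss}(C,r,dp)$ as the $P^o$-torsor obtained by pulling $S^o\to A(C^{(1)})$ back along the de Rham-Hitchin morphism $M_{dR}^{ss}(C,r,dp)\to A(C^{(1)})$; quotienting by $P^o$ therefore returns $M_{dR}^{ss}(C,r,dp)$ itself. This is the scheme-theoretic counterpart of the identity $[\CS^o/\CP^{st,o}_{Y/A}]\cong A(C^{(1)})$ that powers Theorem \ref{sseq}. Combining these two descents yields the isomorphism (\ref{schemeiso}), and quasi-projectivity of $S^o\times^{P^o}M_{Dol}^{ss}(C^{(1)},r,d)$ is then read off from the isomorphism.

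I expect the main technical point to be the interaction of moduli-space formation with these two operations: that $\bar\Delta^{ss}$ genuinely is $P^o$-equivariant, and that the $P^o$-quotient of the scheme-level products still computes the (adequate) moduli space of the corresponding quotient stacks. In characteristic $p$ one has only adequate — not good — moduli spaces for $\CM_{Dol}^{ss}$ and $\CM_{dR}^{ss}$, so some care is needed; the cleanest route is to verify the equivariant descent and the $P^o$-quotient compatibility by fppf descent of the adequate-moduli-space property along the $P^o$-torsor $S^o\times_{A(C^{(1)})}M_{Dol}^{ss}(C^{(1)},r,d)\to S^o\times^{P^o}M_{Dol}^{ss}(C^{(1)},r,d)$, using flatness of the Hitchin-type morphisms and freeness of the $P^o$-action. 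Alternatively, as the plan above indicates, one can sidestep the quotient stacks altogether and argue entirely at the scheme level from the preceding Corollary, which is the route I would write up.
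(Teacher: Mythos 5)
Your proposal is correct and follows essentially the route the paper intends: Theorem \ref{schois} is stated as following ``analogously to Corollary \ref{sseq}'', i.e.\ one passes from the stack-level isomorphism to moduli spaces as in the preceding Corollary and then quotients by $P^o$, using the equivariance $\Delta(L\cdot(E,F))=(E\otimes L^{-1},\widetilde{\Fc}(E,F))$ and the fact that $S^o$ is a $P^o$-torsor so that the de Rham side quotient returns $M_{dR}^{ss}(C,r,dp)$. Your extra care about forming the quotient first as an algebraic space and reading off quasi-projectivity from the resulting isomorphism is a sensible way to make the implicit step precise, not a deviation from the paper's argument.
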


\subsection{An analogous result over finite fields}$\;$

Assume that $C/k$ is the base change of an $\mathbb{F}_q$-curve $\mathcal{C}$. Then, all moduli stacks considered above are defined over $\mathbb{F}_q$ and the de Rham and Dolbeault-Hitchin morphisms are equally obtained via base change from $\mathbb{F}_q$-morphisms.

The constructions of this article carry over to the finite field case, \textit{mutatis mutandis}. The corollary below shows how the geometric properties of the stack of very good splittings can be exploited to compare the number of rational points in de Rham and Dolbeault Hitchin fibers.
\begin{cor}\label{fifield}
Under the aforementioned assumption, let $a \in A(\mathcal{C}^{(1)})(\mathbb{F}_q)$ be an $\mathbb{F}_q$-rational point. Then, there is an isomorphism of $\mathbb{F}_q$-stacks
$$h^{-1}_{Dol}(a) \cong h^{-1}_{dR}(a),$$
respecting the semistable locus. In particular, both Hitchin fibers have the same number of $\mathbb{F}_q$-rational points.
\end{cor}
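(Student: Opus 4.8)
The plan is to produce a single very good splitting of $\CD|_{Y_a}$ defined over $\mathbb{F}_q$, and then to pull back the isomorphism $\Delta$ of Theorem \ref{griso} along the resulting $\mathbb{F}_q$-point of $\CS^o$; Theorem \ref{deltass} will then account for the semistable loci. Beyond the results of this article, the only inputs are Lang's theorem and the vanishing $\mathrm{Br}(\mathbb{F}_q)=0$.

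The first and only non-formal step is to show $\CS^o_a(\mathbb{F}_q)\neq\emptyset$. By Proposition \ref{vgsurj} the morphism $\CS^o\to A(\mathcal{C}^{(1)})$ is surjective, so the fiber $\CS^o_a$, and hence its good moduli space $S^o_a$, is nonempty. By Theorem \ref{vgsch}, $S^o_a$ is a torsor under $P^o_a$, the fiber over $a$ of the smooth group scheme $P^o=\CP^{sh,o}_{Y/A}$; as the fibers of $P^o/A(\mathcal{C}^{(1)})$ are geometrically connected, $P^o_a$ is a smooth connected algebraic group over $\mathbb{F}_q$. Lang's theorem gives $H^1(\mathbb{F}_q,P^o_a)=0$, so this torsor is trivial and we may pick $s\in S^o_a(\mathbb{F}_q)$. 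Pulling back the $\BG_m$-gerbe $\CS^o_a\to S^o_a$ of Theorem \ref{vgsch} along $s$ yields a $\BG_m$-gerbe over $\mathbb{F}_q$, whose class lies in $H^2(\mathbb{F}_q,\BG_m)=\mathrm{Br}(\mathbb{F}_q)=0$; this gerbe is therefore neutral, so $\CS^o_a(\mathbb{F}_q)\neq\emptyset$. Fix a very good splitting $E\in\CS^o_a(\mathbb{F}_q)$.

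With $E$ in hand, the rest is a base change of isomorphisms already established over the Hitchin base. The splitting $E$ is an $\mathbb{F}_q$-point of $\CS^o\hookrightarrow\CS$ lying over $a$, and $\Delta$ is a morphism of $\mathbb{F}_q$-stacks over $\CS$; hence its base change along $E$ is an isomorphism of $\mathbb{F}_q$-stacks $h^{-1}_{Dol}(a)\cong h^{-1}_{dR}(a)$, given explicitly by $F\mapsto\widetilde{\Fc}(E,F)$. Because $E$ is very good, Theorem \ref{deltass} shows that for every $d\in\mathbb{Z}$ this isomorphism matches the degree-$d$ semistable Higgs bundles in $h^{-1}_{Dol}(a)$ with the degree-$dp$ semistable flat connections in $h^{-1}_{dR}(a)$; since every de Rham degree is a multiple of $p$, the isomorphism respects the semistable loci. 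Finally, an equivalence of $\mathbb{F}_q$-stacks restricts to an equivalence of the groupoids of $\mathbb{F}_q$-points, which yields the claimed equality of (automorphism-weighted) numbers of rational points; restricting to a fixed degree, or passing to good moduli spaces via Theorem \ref{schois}, turns this into an equality of the honest point counts of the finite-type semistable loci.

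I expect the point just described---the production of $E$---to be the crux. By Lemma \ref{notriv} the $\CP^{st}_{Y/A}$-torsor $\CS$ has no global section, so an $\mathbb{F}_q$-rational very good splitting over $Y_a$ cannot be obtained by restricting a global one; its existence hinges essentially on the geometric connectedness of the fibers of $P^o/A(\mathcal{C}^{(1)})$, which feeds Lang's theorem, together with the triviality of $\mathrm{Br}(\mathbb{F}_q)$, which promotes the $\mathbb{F}_q$-point of $S^o_a$ to one of $\CS^o_a$.
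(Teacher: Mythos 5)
Your argument is correct, and its engine is the same as the paper's: the fiber of the space of very good splittings over $a$ is a torsor under the smooth connected group scheme $P^o_a$, and Lang's theorem trivializes it. The packaging differs slightly. The paper's proof stays at the level of the twisted product: it base changes Theorem \ref{sseq} to the fiber over $a$, obtaining $\CS^o_a\times^{\CP^{st,o}_a}h^{-1}_{Dol}(a)\cong h^{-1}_{dR}(a)$ compatibly with the semistable loci, and then invokes Lang's theorem for the torsor $S^o_a$ to collapse the twisted product; no rational splitting is ever exhibited and the Brauer group of $\mathbb{F}_q$ is not mentioned. You instead produce an honest $\mathbb{F}_q$-rational very good splitting $E$ by first trivializing the $P^o_a$-torsor $S^o_a$ (Lang) and then neutralizing the $\BG_m$-gerbe $\CS^o_a\to S^o_a$ of Theorem \ref{vgsch} using $\mathrm{Br}(\mathbb{F}_q)=0$, and you then specialize $\Delta$ (Theorems \ref{griso} and \ref{deltass}) at $E$. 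Both routes are valid; yours costs one extra ingredient (the vanishing of $\mathrm{Br}(\mathbb{F}_q)$, needed to pass from a point of the coarse space to a point of the gerbe) but buys a slightly stronger and more concrete statement, namely that the equivalence is realized as $F\mapsto\widetilde{\Fc}(E,F)$ for an explicit splitting defined over $\mathbb{F}_q$, whereas the twisted-product formulation sidesteps the gerbe issue entirely. Your remaining steps (semistability and the degree rescaling $d\mapsto dp$ via Theorem \ref{deltass}, and the point count from an equivalence of $\mathbb{F}_q$-stacks) are fine and match the paper's intent.
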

\begin{proof}
The fiberwise version of Theorem \ref{sseq} obtained by base change yields an isomorphism:
$$ S^o_a\times^{\mathcal{P}^{st,o}_a} h^{-1}_{Dol}(a)  \cong h^{-1}_{dR}(a)$$
preserving the semistable locus.

The fibre $S^o_a$ is a $\mathcal{P}_a^{st,o}$-torsor over $\mathbb{F}_q$. Since the latter is a connected group scheme, Lang's theorem implies that the torsor is trivial. This implies the assertion.
\end{proof}

\section{Isomorphic Decomposition Theorems}

In this section, we prove Theorem \ref{isoic} as a direct consequence of 
Theorem \ref{isodec} which, in turn is proved as an application of
Theorem \ref{schois} and of a homotopy-type lemma for actions of group schemes
with geometrically connected fibers.

Let us remark that even in the coprime case, where the moduli spaces are smooth,
the Ng\^o Support Theorem \cite[Theorem 7.2.1]{Ngo} \cite[Theorem 7.0.3]{dCRS},
which leverages the action of the group scheme $P^o,$
is not sufficient to reach the desired conclusion (\ref{icsh}) because
the local systems appearing in the Ng\^o strings are governed by the top direct image sheaves
$R^{\rm top} h_{\rm dR,*} \overline{\mathbb Q}_\ell$
and $R^{\rm top} h_{\rm *} \overline{\mathbb Q}_\ell$, which a priori
may 
differ for the de Rham-Hitchin and for the Hitchin morphism; they only become isomorphic after \'etale base changes of $A(C^{(1)}).$

Theorem \ref{isoic} can be viewed as yet another incarnation of a NAHT
in positive characterisitc, where the moduli spaces are not necessarily 
in natural bijection, but their cohomology groups are isomorphic.

\begin{thm}
\label{isodec}
We have the following two canonical isomorphisms between the perverse cohomology sheaves in $D_c^b(A(C^{(1)}),\oql)$:
\begin{equation}
\label{doldr}
    \pcs^*(h_{Dol,*}\oql)\cong \pcs^*(h_{dR,*}\oql),\quad \pcs^*(h_{Dol,*}\CI\CC)\cong \pcs^*(h_{dR,*}\CI\CC).
\end{equation}
\end{thm}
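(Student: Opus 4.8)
The plan is to reduce everything to Theorem \ref{schois} and then transport that scheme-level isomorphism to perverse cohomology by a base-change-and-descent argument along the torsor $S^o$. Write $A:=A(C^{(1)})$ and $f\colon S^o\to A$; by Theorem \ref{vgsch} and Proposition \ref{picost}, $f$ is a torsor under the smooth $A$-group scheme $P^o$ with geometrically connected fibres, hence $f$ is smooth and surjective, say of relative dimension $\delta$. Put $\widetilde M:=S^o\times_A M_{Dol}^{ss}(C^{(1)},r,d)$. Applying the elementary identity $(S^o\times^{P^o}X)\times_A S^o\cong S^o\times_A X$ for the $P^o$-torsor $S^o$ to $X=M_{Dol}^{ss}(C^{(1)},r,d)$ and invoking Theorem \ref{schois}, one also gets $\widetilde M\cong S^o\times_A M_{dR}^{ss}(C,r,dp)$, compatibly with the projections to $S^o$. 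Let $\pi_{Dol}\colon\widetilde M\to M_{Dol}^{ss}$, $\pi_{dR}\colon\widetilde M\to M_{dR}^{ss}$ and $\rho\colon\widetilde M\to S^o$ be the three projections; then $\rho$ is simultaneously the base change of $h_{Dol}$ and of $h_{dR}$ along $f$, whereas $\pi_{Dol}$ and $\pi_{dR}$ are base changes of $f$, hence smooth surjective of relative dimension $\delta$ with geometrically connected fibres. (Incidentally, since $h_{Dol}$ is proper and $f$ is an fppf cover, $h_{dR}$ is proper as well, which we use below.)

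\textbf{Base change to $S^o$.} Because $h_{Dol}$ and $h_{dR}$ are proper, proper base change applied to the two Cartesian squares gives $f^*(h_{Dol,*}\oql)\cong\rho_*\oql\cong f^*(h_{dR,*}\oql)$; and since pulling back an intersection complex along a smooth morphism of relative dimension $\delta$ yields the intersection complex shifted by $\delta$, we have $\pi_{Dol}^*\CI\CC_{M_{Dol}^{ss}}[\delta]\cong\CI\CC_{\widetilde M}\cong\pi_{dR}^*\CI\CC_{M_{dR}^{ss}}[\delta]$ ($\widetilde M$ being irreducible), whence $f^*(h_{Dol,*}\CI\CC)\cong\rho_*\CI\CC_{\widetilde M}[-\delta]\cong f^*(h_{dR,*}\CI\CC)$. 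So for $C=\oql$ and for $C=\CI\CC$ there is a canonical isomorphism $\alpha_C\colon f^*(h_{Dol,*}C)\xrightarrow{\ \sim\ }f^*(h_{dR,*}C)$ in $D^b_c(S^o,\oql)$. As $f^*[\delta]$ is perverse $t$-exact, shifting by $\delta$ and applying the perverse cohomology functors turns $\alpha_C$ into canonical isomorphisms $f^*[\delta]\,\pcs^j(h_{Dol,*}C)\xrightarrow{\ \sim\ }f^*[\delta]\,\pcs^j(h_{dR,*}C)$ of perverse sheaves on $S^o$, one for each $j$.

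\textbf{Descent via the homotopy-type lemma.} It remains to descend these isomorphisms along $f$. Perverse sheaves satisfy smooth descent, so it suffices to verify that each of them is compatible with the tautological descent data on $f^*[\delta]\,\pcs^j(h_{Dol,*}C)$ and $f^*[\delta]\,\pcs^j(h_{dR,*}C)$; equivalently, that its two pullbacks to $S^o\times_A S^o\cong S^o\times_A P^o$ coincide. Their discrepancy is an automorphism of $\mathrm{pr}_1^*f^*[\delta]\,\pcs^j(h_{dR,*}C)$ — that is, a family, parametrised by $P^o$ over $A$, of automorphisms of the perverse sheaf $f^*[\delta]\,\pcs^j(h_{dR,*}C)$ which restricts to the identity along the unit section of $P^o$. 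The homotopy-type lemma asserts that a family of automorphisms of a perverse sheaf parametrised by a group scheme with geometrically connected fibres and trivial along a section is identically trivial. Hence each $\pcs^j(\alpha_C)$ descends; since $f^*[\delta]$ is faithful on perverse sheaves (because $f$ is surjective), the descended isomorphism is unique and identifies $\pcs^j(h_{Dol,*}C)$ with $\pcs^j(h_{dR,*}C)$, canonically. Taking $C=\oql$ and $C=\CI\CC$ gives the two asserted isomorphisms \eqref{doldr}.

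\textbf{Main obstacle.} I expect the homotopy-type lemma — equivalently, the full faithfulness of $f^*[\delta]\colon\mathrm{Perv}(A)\to\mathrm{Perv}(S^o)$, i.e. the triviality of the action of a connected group scheme on the perverse cohomology sheaves — to be the hard point. One must cope with the fact that the fibres of $P^o/A$, being identity components of Picard schemes of possibly singular or reducible spectral curves, are only semi-abelian, so $Rf_*\oql$ is not a direct sum of shifted lisse sheaves and the naive computation $\Hom(f^*K[\delta],f^*L[\delta])=\Hom(K,L\otimes Rf_*\oql)$ does not visibly collapse. The expected route is: the endomorphism algebra of a perverse sheaf on $A$ is a finite-dimensional $\oql$-algebra, so its units form a linear algebraic group; the $P^o$-family of automorphisms above defines a morphism from each geometrically connected fibre of $P^o$ into this group that is the identity at one point; connectedness then forces it constant, the delicate ingredient being the customary spreading-out and $\ell$-adic continuity comparison between a connected $k$-variety and an $\oql$-group of automorphisms. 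It is precisely here that geometric connectedness of the fibres of $P^o$ — secured by using very good rather than merely good splittings (Definition \ref{defvgs}) — is indispensable: for $\CP^{st,0}$ the fibres acquire extra connected components and the descent fails, in parallel with the failure of semistability in Remark \ref{mainmis}.
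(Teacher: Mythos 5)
Your proposal is correct and is essentially the paper's own argument: both reduce to Theorem \ref{schois}, trivialize the $P^o$-torsor $S^o$ (you by pulling everything back along the canonical smooth cover $f\colon S^o\to A(C^{(1)})$, the paper by an \'etale cover of $A(C^{(1)})$ trivializing the torsor), and then kill the resulting descent/gluing cocycle with the homotopy lemma before gluing by the sheaf property of Hom between shifted perverse sheaves as in \cite[\S 3.2]{BBD}. The ``main obstacle'' you single out is not actually open: it is precisely the homotopy lemma \cite[Lemma 3.2.3]{LauNgo} on the triviality of the action of sections of a smooth group scheme with geometrically connected fibres on perverse cohomology sheaves, which the paper simply cites, so no new rigidity argument is required.
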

\begin{proof}
Let $h^S: S^o\times^{P^o} M_{Dol}^{ss}(C^{(1)})\to A(C^{(1)})$ be the twisted Hitchin fibration. 
By the isomorphism (\ref{schemeiso}), we have the following isomorphism of perverse cohomology sheaves in $D_c^b(A(C^{(1)}),\oql)$:
\begin{equation}
\label{sdr}
    \pcs^*(h_{*}^S\oql)\cong \pcs^*(h_{dR,*}\oql).
\end{equation}
Below we show that $\pcs^*(h_{*}^S\oql)\cong \pcs^*(h_{Dol,*}\oql):$

Let $\{ U_{\alpha}\}_{\alpha}$ be an \'etale covering of $A(C^{(1)})$ so that for each $\alpha$, the $P^o|_{U_{\alpha}}$-torsor $S^o|_{U_{\alpha}}$ is trivialized. 
We fix such trivializations $\widetilde{\psi_{\alpha}}: S^o|_{U_{\alpha}} \xrightarrow{\sim} P^o|_{U_{\alpha}}$.

In view of the isomorphism (\ref{schemeiso}), each trivialization $\widetilde{\psi_{\alpha}}$ induces an isomorphism in $D_c^b(U_{\alpha},\oql)$:
\begin{equation}
\label{psia}
    \psi_{\alpha}: \pcs^*(h^S_*\oql)|_{U_{\alpha}}\xrightarrow{\sim} \pcs^*(h_{Dol,*}\oql)|_{U_{\alpha}}.
\end{equation}

For each pair of indices $\alpha$ and $\beta$, let $U_{\alpha\beta}$ be the fiber product $U_{\alpha}\times_{A(C^{(1)})} U_{\beta}$.

\textbf{Claim:}  We have the following commutative diagram in $D_c^b(U_{\alpha\beta},\oql):$
\begin{equation}
\label{claimcomm}
    \xymatrix{
    (\pcs^*(h^S_*\oql)|_{U_{\alpha}})|_{U_{\alpha\beta}} \ar[r]^-{\psi_{\alpha}|_{U_{\alpha\beta}}}_-{\sim} 
    \ar[d]_-{can}
    & (\pcs^*(h_{Dol,*}\oql)|_{U_{\alpha}})|_{U_{\alpha\beta}}
    \ar[d]^-{can}\\
    (\pcs^*(h^S_*\oql)|_{U_{\beta}})|_{U_{\alpha\beta}} \ar[r]^-{\psi_{\beta}|_{U_{\alpha\beta}}}_-{\sim} 
    & (\pcs^*(h_{Dol,*}\oql)|_{U_{\beta}})|_{U_{\alpha\beta}},
    }
\end{equation}
where the vertical morphisms are the canonical isomorphisms induced by the pullbacks to the fiber product $U_{\alpha\beta}$.

We first show the Claim above.
Throughout the proof we fix the canonical isomorphisms of the form $(X|_{U_{\alpha}})|_{U_{\alpha\beta}}\xrightarrow{\sim} X|_{U_{\alpha\beta}}$ where $X$ is an object in $D_c^b(A(C^{(1)}),\oql)$. We hence regard those canonical isomorphisms, such as the vertical isomorphisms $can$ in (\ref{claimcomm}), as identities. 
To show the Claim, we thus need to show that \[\psi_{\beta\alpha}:=(\psi_{\beta}|_{U_{\alpha\beta}})^{-1}\circ \psi_{\alpha}|_{U_{\alpha\beta}}= \mathrm{id}\in \mathrm{Aut}_{D_c^b(U_{\alpha\beta})}(\pcs^*(h_*^S\oql)|_{U_{\alpha\beta}}).\]

The isomorphism $\psi_{\beta\alpha}$ above is induced by the isomorphism of the $U_{\alpha\beta}$-scheme
\[\widetilde{\psi_{\beta\alpha}}:= (\widetilde{\psi_{\beta}}|_{U_{\alpha\beta}})^{-1}\circ \widetilde{\psi_{\alpha}}|_{U_{\alpha\beta}}\in  \mathrm{Aut}_{U_{\alpha\beta}-\mathrm{sch}}(S^o|_{U_{\alpha\beta}})\cong P^o(U_{\alpha\beta}),\]
where the last isomorphism of automorphism groups is due to the fact that $ S^o$ is a torsor under $ P^o$. 
This shows that the isomorphism $\psi_{\beta\alpha}$ is induced by the action of a global section $\widetilde{\psi_{\beta\alpha}}$ of $ P^o|_{U_{\alpha\beta}}$ on the scheme $( S^o\times^{ P^o} M_{Dol})|_{U_{\alpha\beta}}$. 

Since the group scheme $ P^o$ is smooth and has connected geometric fibers over $A(C^{(1)})$, the homotopy lemma \cite[Lemma 3.2.3]{LauNgo} shows that any global section of $ P^o$ acts trivially on the perverse cohomology sheaves of any $ P^o$-schemes. 
Therefore we have that $\psi_{\beta\alpha}$ is the identity on $\pcs^*(h_*^S\oql)|_{U_{\alpha\beta}}$. Thus the \textbf{Claim} above is shown. 

By \cite[Proposition 3.2.2.]{BBD}, the presheaf that sends any \'etale covering $U\to A(C^{(1)})$ to the group $End_{D_b^c(U,\oql)}(\pcs^*(h^S_*\oql)|_{U})$ in fact defines a sheaf. Therefore, by the Claim above, we see that the isomorphisms $\psi_{\alpha}$'s can be glued together to an isomorphism 
\begin{equation}
\label{sdol}
    \psi: \pcs^*(h_*^S\oql)\xrightarrow{\sim} \pcs^*(h_{Dol,*}\oql).
\end{equation}

Combining the isomorphisms (\ref{sdr}) and (\ref{sdol}), we obtain the desired first isomorphism in (\ref{doldr}).
Replacing the constant sheaves $\oql$ with the corresponding IC sheaves $\CI\CC$, the exact same argument above gives the second isomorphism in (\ref{doldr}).

It remains to show that the isomorphisms in (\ref{doldr}) is canonical. 
Given any two trivializing covers, we can always take the fiber product of the two, and we are reduce to show that given a fixed trivilizing cover, the isomorphism (\ref{doldr}) is independent of the choice of trivializations $\widetilde{\psi_{\alpha}}$'s. 
Indeed, different trivializations differ by the action of a section of $P^o$, but we have seen above that the action of such sections on the perverse cohomology sheaves are trivial. Thus the proof is finished. 
\end{proof}

\begin{thm}
\label{isoic}
There are distinguished isomorphisms in $D_c^b(A(C^{(1)}),\oql)$:
\begin{equation}
\label{icsh}
    h_{Dol,*}\CI\CC\cong h_{dR,*}\CI\CC.
\end{equation}
In particular, we have isomorphisms of intersection cohomology groups:
\begin{equation}
\label{icc}
    IH^*(M_{Dol}^{ss}(C^{(1)},d),\oql)\cong IH^*(M_{dR}^{ss}(C,dp),\oql).
\end{equation}
\end{thm}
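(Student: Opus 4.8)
The plan is to deduce Theorem \ref{isoic} directly from Theorem \ref{isodec} by upgrading the term-by-term identification of perverse cohomology sheaves to an identification of the whole direct image complexes, using the Decomposition Theorem together with the Deligne splitting principle. First I would record the general position: both the Hitchin morphism $h_{Dol}\colon M_{Dol}^{ss}(C^{(1)},d)\to A(C^{(1)})$ and the de Rham--Hitchin morphism $h_{dR}\colon M_{dR}^{ss}(C,dp)\to A(C^{(1)})$ are projective, and the intersection complexes $\CI\CC$ of the (possibly singular) moduli spaces $M_{Dol}^{ss}(C^{(1)},d)$ and $M_{dR}^{ss}(C,dp)$ are pure, self-dual perverse sheaves. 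Hence by the Decomposition Theorem \cite{BBD} the complexes $h_{Dol,*}\CI\CC$ and $h_{dR,*}\CI\CC$ are pure, and each is (a priori non-canonically) isomorphic to the direct sum of the shifts of its perverse cohomology sheaves; the whole point is to make both the splitting and the matching of summands distinguished rather than arbitrary.

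Next I would fix a relatively ample line bundle $\eta_{Dol}$ on $M_{Dol}^{ss}(C^{(1)},d)$, coming from its GIT construction, and a relatively ample line bundle $\eta_{dR}$ on $M_{dR}^{ss}(C,dp)$ --- for instance the class obtained by transporting $\eta_{Dol}$ along the isomorphism $S^o\times^{P^o}M_{Dol}^{ss}(C^{(1)},d)\simeq M_{dR}^{ss}(C,dp)$ of Theorem \ref{schois}, or an independent polarization of $M_{dR}^{ss}(C,dp)$. Each such choice produces, via the relative Hard Lefschetz theorem and the Deligne splitting \cite{D}, \cite[Thm. 1.1.1]{dC13}, a canonical isomorphism
\begin{equation*}
h_{Dol,*}\CI\CC\;\simeq\;\bigoplus_i\pc{i}{h_{Dol,*}\CI\CC}[-i],\qquad
h_{dR,*}\CI\CC\;\simeq\;\bigoplus_i\pc{i}{h_{dR,*}\CI\CC}[-i].
\end{equation*}
Composing the first isomorphism with the direct sum of the canonical isomorphisms $\pc{i}{h_{Dol,*}\CI\CC}\simeq\pc{i}{h_{dR,*}\CI\CC}$ of Theorem \ref{isodec} (each shifted by $[-i]$), and then with the inverse of the second isomorphism, yields the desired distinguished isomorphism (\ref{icsh}) in $D_c^b(A(C^{(1)}),\oql)$; the various admissible choices of relatively ample classes on the two sides are organized into the five distinguished isomorphisms recorded in Remark \ref{dspl}. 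Finally, applying $R\Gamma(A(C^{(1)}),-)$ and using that $IH^*(M_{Dol}^{ss}(C^{(1)},d),\oql)=\BH^*(A(C^{(1)}),h_{Dol,*}\CI\CC)$ and $IH^*(M_{dR}^{ss}(C,dp),\oql)=\BH^*(A(C^{(1)}),h_{dR,*}\CI\CC)$ --- by composing the proper pushforwards with the structure map to a point --- the isomorphism of complexes immediately gives the isomorphism (\ref{icc}) of graded intersection cohomology groups.

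The only genuine obstacle is conceptual rather than computational: the Decomposition Theorem alone produces a splitting with a large automorphism group, so it cannot be combined directly with the \emph{canonical} identification of Theorem \ref{isodec}. The Deligne splitting repairs this, but only after a choice of relative polarization, and there is no God-given identification between polarizations on the Dolbeault and de Rham sides, precisely because $S^o$ is a nontrivial $P^o$-torsor (Lemma \ref{notriv}, Theorem \ref{vgsch}) and the Chen--Zhu--Groechenig identification is not canonical. This is exactly why the isomorphisms (\ref{icsh}) are called ``distinguished'' and not ``canonical'', and why one obtains a small family of them rather than a single one; I do not expect any further subtlety, since the Deligne splitting is functorial and the perverse-cohomology isomorphisms of Theorem \ref{isodec} are built \'etale-locally on $A(C^{(1)})$, so no compatibility beyond what is automatic from composition of isomorphisms is required.
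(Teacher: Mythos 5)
Your argument is correct and is essentially the paper's own proof: the paper deduces (\ref{icsh}) from the second isomorphism in (\ref{doldr}) together with the Decomposition Theorem, deferring the ``distinguished'' refinement to Remark \ref{dspl}, which, exactly as you propose, uses choices of relative ample classes and the Deligne splitting \cite{D}, \cite[Thm. 1.1.1]{dC13} to assemble the canonical isomorphisms of Theorem \ref{isodec} into isomorphisms (\ref{icsh}), and then obtains (\ref{icc}) by taking cohomology. The only minor inaccuracy is your accounting of the five distinguished isomorphisms: in Remark \ref{dspl} they come from the three Deligne splittings attached to the fixed ample classes plus two further ones obtained by working directly in intersection cohomology, not from varying the polarizations on the two sides.
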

\begin{proof}
The isomorphism (\ref{icsh}) follows from the second isomorphism in (\ref{doldr}) and the Decomposition Theorem.
\end{proof}

\begin{rmk}
\label{dspl}
Choose relatively ample line bundles on $M_{Dol}/A(C^{(1)})$ and $(S^o\times^{P^o}M_{Dol})/A(C^{(1)})$.
Given these choices, there are three distinguished Deligne splittings in the derived category of the form $ \bigoplus_*\pcs^*(h_*\CI\CC)[-*]\cong h_*\CI\CC$ which can be taken as a canonical isomorphism in the Decomposition Theorem for the direct image complexes for the Hitchin and de Rham-Hitchin morphisms, respectively. 
By using such splittings as in \cite[\S2.4]{dC13}, together with the canonical isomorphism (\ref{doldr}), we obtain distinguished isomorphism (\ref{icsh}) $h_{Dol,*}\CI\CC\cong h_{dR,*}\CI\CC$.
The isomorphisms 
(\ref{icc}) follow by taking cohomology.
It is also possible to obtain two additional distinguished isomorphisms (\ref{icc})  by working directly in intersection cohomology, i.e. by by-passing the production of isomorphisms (\ref{icsh}); see \cite[Thm 1.1.1]{dC13}.
\end{rmk}

\begin{rmk} Using Corollary \ref{fifield}, when we are in the case $(d,r)=(dp,r)=1$, an isomorphism (\ref{doldr}) can also be obtained by using, in the context of the function-sheaf dictionary,  the smoothness and purity of the moduli spaces as well as  point-counting over finite fields. See also \cite[Remark 3.14.(i)]{dCZ}.
\end{rmk}

\end{document}